\newcommand\R{{\mathbf{R}}}
\newcommand\C{{\mathbf{C}}}
\newcommand\Z{{\mathbf{Z}}}
\newcommand\eps{\varepsilon}
\newcommand\Lip{\operatorname{Lip}}
\newcommand\dist{\operatorname{dist}}
\newcommand\id{{\operatorname{id}}}
\newcommand\sgn{\operatorname{sgn}}
\newcommand\Vol{\operatorname{Vol}}
\newcommand\bigO{\mathcal{O}}
\theoremstyle{plain}
  \newtheorem{theorem}[subsection]{Theorem}
  \newtheorem{proposition}[subsection]{Proposition}
  \newtheorem{lemma}[subsection]{Lemma}
  \newtheorem{corollary}[subsection]{Corollary}
\theoremstyle{remark}
  \newtheorem{remark}[subsection]{Remark}
  \newtheorem{example}[subsection]{Example}
\theoremstyle{definition}
  \newtheorem{definition}[subsection]{Definition}
\begin{document}

\title{A finitary version of Gromov's polynomial growth theorem}

\author{Yehuda Shalom}
\address{Department of Mathematics, UCLA, Los Angeles CA,  90095-1555}
\email{yeshalom@math.ucla.edu}

\author{Terence Tao}
\address{Department of Mathematics, UCLA, Los Angeles CA 90095-1555}
\email{tao@math.ucla.edu}

\begin{abstract} We show that for some absolute (explicit) constant
$C$, the following holds for every finitely
generated group $G$, and all $d >0$: \newline 
If there is {\it some} $ R_0 > \exp(\exp(Cd^C))$ 
for which the number of elements in a ball of radius $R_0$ in a Cayley graph 
of $G$ is bounded by
$R_0^d$, then $G$ has a finite index subgroup which is nilpotent 
(of step $<C^d$). An effective bound on the finite index is provided if
``nilpotent'' is replaced by ``polycyclic'', thus yielding
a non-trivial result for finite groups as well. 
\end{abstract}

\maketitle

\section{Introduction}

A famous theorem of Gromov \cite{gromov} asserts that all finitely 
generated groups of polynomial growth (thus, in the notation used below, one has $|B_S(R)| \leq R^d$ for some $d$ and all sufficiently large $R$, where $S$ is a fixed set of generators) are virtually nilpotent.  This was generalized by van der Dries and Wilkie \cite{vdw} by assuming the polynomial growth condition $|B_S(R)| \leq R^d$ at an infinite number, rather than all scales.  A second proof of this fact, which for the first time released the
dependence on the involved solution to Hilbert's $5^{\operatorname{th}}$ problem, 
was given recently by Kleiner \cite{kleiner}, motivated by work of 
Colding-Minicozzi~\cite{cold} and 
using the theory of harmonic functions. 
In this paper, we refine Kleiner's work to 
obtain a further strengthening of Gromov's theorem that only requires the polynomial growth condition at \emph{one} (sufficiently large, yet explicit) 
scale, and gives quantitative control on the nilpotency degree.  
To state precisely the result (mentioned at the abstract above), we need some notation.

\begin{definition}[$(R_0,d)$-growth groups]\label{rdg-def}  A \emph{finitely generated group} is a pair $G = (G,S)$, where $G$ is a group, and $S \subset G$ is a finite non-empty symmetric set which generates $G$ (thus $S^{-1} := \{ s^{-1}: s \in S \} = S$).  For each $g \in G$, we define $\|g\|_S := \inf \{ n: g \in S^n \}$ to be the length of the smallest word with alphabet $S$ that evaluates to $g$. For each $R > 0$, we define $B_S(R) := \{ g \in G: \|g\|_S \leq R \}$ to be the collection of words in $S$ of length at most $R$.  For $R_0, d > 0$, we define a \emph{$(R_0,d)$-growth group} to be a finitely generated group $(G,S)$ such that
\begin{equation}\label{bsrd}
 |B_S(R_0)| \leq R_0^d.
\end{equation}
\end{definition}

\begin{remark} One can of course generalize \eqref{bsrd} by replacing the right-hand side of $R_0^d$ by $CR_0^d$ for some additional parameter $C$, as is customary in the literature.  But this would add a new parameter to an already complicated notational system and so we have chosen to drop this parameter, as one can partially simulate it by increasing $d$ slightly and assuming $R_0$ is large.  Note that a finitely generated group $(G,S)$ has polynomial growth if and only if there exists a $d$ such that $(G,S)$ is a $(R_0,d)$-growth group for all sufficiently large $R_0$.
\end{remark}

\begin{definition}[Quantitative finite index]\label{qfi-def}  Let $R, K > 0$.  A finitely generated group $(G',S')$ is said to be a \emph{$(K,R)$-subgroup} of $(G,S)$ if $G'$ is a subgroup of $G$, $S' \subset B_S(R)$, and $B_S(K+1) \subset B_S(K) \cdot B_{S'}(K)$.  If we drop the final condition $B_S(K+1) \subset B_S(K) \cdot B_{S'}(K)$, we say that $G'$ is a \emph{$(\infty,R)$-subgroup} of $G$.
\end{definition}

\begin{remark}\label{joe} Observe that if $(G',S')$ is a $(K,R)$-subgroup of $(G,S)$ then $B_S(r+1) \subset B_S(r) \cdot B_{S'}(K)$ for all $r \geq K$, and on iterating this we conclude $G \subset B_S(K) \cdot G'$.  In particular, $G'$ is a finite index subgroup of $G$ of index at most $|B_S(K)|$.   Conversely, if $(G',S')$ has finite index in $(G,S)$, then we can write $G$ as a finite union of cosets $x_1 \cdot G', \ldots,x_m \cdot G'$ of $G'$, and in particular one has the relations $e x_i = x_{j_{e,i}} g_{e,i}$ for all $1 \leq i \leq m$, $e \in S$, and some $1 \leq j_{e,i} \in m$, $g_{e,i} \in G'$.  If one then sets $R := \sup \{ \|e'\|_S: e' \in S' \}$ and $K := \sup \{ \|x_i\|_S: 1 \leq i \leq m\} \cup \{ \|g_{e,i}\|_{S'}: 1 \leq i \leq m, e \in S \}$ we see that $(G',S')$ is a $(K,R)$-subgroup.  Note, however, that this argument does not (and cannot) 
give effective bounds on $K, R$.
\end{remark}

\begin{example} Let $m=2p-1$ be a large odd number. 
Then the (additive) finitely generated group $(\Z/m\Z, \{-2,+2\})$ 
is a $(2,2)$-subgroup of 
$(\Z/m\Z, \{-1, +1\})$, while conversely, $(\Z/m\Z, \{-1, +1\})$ is  
merely a $(2,p)$-subgroup  
of $(\Z/m\Z, \{-2,+2\})$.  The intuition here is that while $\{-2,+2\}$ does generate the element $1$, this is a ``global'' fact (relying on the parity of $m$) rather than a ``local'' one, and thus cannot be detected in the limit $p\to \infty$ if one is only allowed to perform a bounded number of group operations.  Thus we see that this quantitative notion of finite index not only measures the index of $G'$ in $G$, but also the relative position of $S$ and $S'$. This kind of ``practical'' interpretation of abstract (and often trivial) group theoretic notions is necessary in order to perform the quantitative arguments in this paper properly.
\end{example}

\begin{remark}\label{growth} It is immediate that if $(G,S)$ is a $(R_0,d)$-growth group, and $(G',S')$ is a $(K,R_0^\kappa)$-subgroup of $(G,S)$ for some $0 < \kappa < 1$, then $(G',S')$ is a $(R_0^{1-\kappa}, d/(1-\kappa))$-growth group.  This is analogous to the obvious fact that any finite index subgroup of a group of polynomial growth, remains of polynomial growth.
\end{remark}

\begin{definition}[Virtual nilpotency]\label{virtnil-def}  Let $K,R, s, D \geq 1$.  A finitely generated group $(G,S)$ is said to be \emph{$(s,D)$-nilpotent} if $|S| \leq D$ and $G$ is nilpotent of step at most $s$ (i.e. every $s'$-fold iterated commutator vanishes for $s'>s$).  A group $(G,S)$ is said to be \emph{$(K,R,s,D)$-virtually nilpotent} if it contains a $(s,D)$-nilpotent $(K,R)$-subgroup $(G',S')$.
\end{definition}

\begin{theorem}[Quantitative Gromov theorem]\label{main-thm}  Let $d,R_0 > 0$,  and assume that
$$
R_0 \geq \exp(\exp(Cd^C))
$$
for some sufficiently large absolute constant $C$.  Then every $(R_0,d)$-growth group is $(K(R_0,d), K(R_0,d), C^d, C^d)$-virtually nilpotent for some $K(R_0,d)$ depending only on $R_0,d$.
\end{theorem}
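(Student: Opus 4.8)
The plan is to carry out Kleiner's proof of Gromov's theorem quantitatively, reading off from the single hypothesis~\eqref{bsrd} everything that Kleiner extracts from polynomial growth at all large scales. The argument is a recursion: one round produces a finite-index subgroup $G_1\le G$ fitting into an extension $1\to N\to G_1\to\Z\to 1$ in which $N$ is again a growth group, now of exponent at most $d-\tfrac12$ and at a scale still more than large enough to re-enter the machine; as each round lowers the exponent by at least $\tfrac12$, after $O(d)$ rounds it has dropped to $\le 1$, at which point the group in hand is finite --- indeed, an infinite finitely generated group has non-empty spheres at every radius, so $|B_S(R)|\ge R+1$, whence \eqref{bsrd} with $d\le 1$ (and $R_0>1$) forces finiteness, and in fact forces $\mathrm{diam}(G)\le R_0$, so that the trivial subgroup is a $(1,1)$-nilpotent $(R_0,R_0)$-subgroup and $G$ is virtually nilpotent with the required bounds. (Finite groups with larger $d$ are handled by the same recursion, using a finite-scale version of the harmonic-function space in place of $\mathcal H$ below; we suppress this wrinkle and describe the main case, where $G$ is infinite.)

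For the inductive step one first isolates a good scale. Writing $|B_S(R_0)|/|B_S(1)|$ as the telescoping product $\prod_k|B_S(2^{k+1})|/|B_S(2^k)|$, a ``bad'' dyadic step --- one where the ratio exceeds a threshold $K_0=\exp(O(d))$ --- can occur at most $O(d\log R_0/\log K_0)$ times; choosing $K_0$ large enough, a pigeonhole produces a scale $r\in[R_0^{1/4},R_0^{3/4}]$ at which the ball volume doubles with constant $K_0$ over all of the $O(d)$ consecutive dyadic scales through which the subsequent argument must chain. This lets one run, on the Cayley graph, the Colding--Minicozzi/Kleiner analysis: a quantitative Poincar\'e inequality on balls (from connectedness and the doubling) together with a reverse Poincar\'e (Caccioppoli) energy estimate for functions harmonic for the simple random walk, fed into Colding--Minicozzi's rank-versus-volume-growth lemma, bounds the dimension of the space $\mathcal H$ of Lipschitz harmonic functions on $G$ by an explicit $n=n(d)$. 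Since this finiteness uses only local data at scale $r$, the translation action of $G$ on $\mathcal H$ is an honest linear representation $\rho:G\to\mathrm{GL}(\mathcal H)\cong\mathrm{GL}_n(\R)$. A quantitative form of Kleiner's non-vanishing lemma (an exhaustion and random-walk argument) gives $\dim\mathcal H\ge 2$ since $G$ is infinite; consequently $\rho(G)$ is infinite, for if $\ker\rho$ had finite index it would act trivially on $\mathcal H$, forcing every element of $\mathcal H$ to descend to a harmonic --- hence constant --- function on a finite quotient graph.

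Now $\rho(G)$ is a finitely generated linear group which, being a quotient of $G$, has polynomial growth at the scales that matter, hence contains no non-abelian free subgroup; an effective form of the Tits alternative together with an effective Milnor--Wolf analysis of virtually solvable linear groups (the finite part handled by Jordan's theorem, the solvable part through the structure of solvable algebraic groups) produces a finite-index subgroup of $\rho(G)$ mapping onto $\Z$, with all quantities effectively bounded. Pulling back through $\rho$ and invoking the Schreier-generator estimate implicit in Remark~\ref{joe}, one gets a $(K_1,R_0^{c})$-subgroup $G_1\le G$ with a surjection $\phi:G_1\to\Z$; by Remark~\ref{growth}, $(G_1,S_1)$ is a growth group of exponent $\asymp d$ at a scale $\asymp R_0$. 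A quantitative Milnor lemma shows $N:=\ker\phi$ is generated by boundedly many conjugates $\{t^k s t^{-k}:s\in S_1,\ |k|\le K_2\}$ (with $\phi(t)=1$), and that every $t^k n$ with $|k|+\|n\|_{S_N}\le R$ lies in $B_{G_1}(O(R))$; hence $|B_{G_1}(O(R))|\gtrsim R\,|B_N(R)|$, so $N$ is a growth group of exponent $\le d-\tfrac12$ at a scale $R_0^{c'}$ with $c'$ an absolute constant --- still above the threshold $\exp(\exp(C(d-\tfrac12)^C))$, because $d^C-(d-\tfrac12)^C\ge 1-2^{-C}$ is bounded away from $0$ uniformly in $d\ge 1$ while the loss of scale is only logarithmic; it is precisely this slack that forces the threshold on $R_0$ to involve a power of $d$ rather than $d$ itself. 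Applying the inductive hypothesis to $N$ yields a controlled finite-index polycyclic subgroup $N'\le N$; replacing $N'$ by the intersection of its $t$-conjugates --- finitely many, since the finitely generated group $N$ has only finitely many subgroups of index $[N:N']$ --- makes it $t$-invariant, so $N'\rtimes\langle t\rangle$ is a finite-index polycyclic subgroup of $G_1$, hence of $G$. Iterating over the $O(d)$ rounds, replacing the final generating set by a minimal one, and tracking the accumulated finitary losses, one obtains a finite-index polycyclic subgroup of $G$ of Hirsch length $\le C^d$ with all data bounded by some $K(R_0,d)$; this is the effective ``polycyclic'' statement. The classical fact that a polycyclic group of polynomial growth is virtually nilpotent --- of step at most its Hirsch length, hence $\le C^d$ --- then upgrades this to Theorem~\ref{main-thm}, the index in this last passage being finite but, as no effective Milnor--Wolf bound is available here, not controlled.

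The principal difficulty is concentrated in the two substantive steps above. Making the finite-dimensionality of $\mathcal H$ quantitative \emph{and} dependent on the single scale $R_0$ alone is delicate: Kleiner's proof tacitly passes to scaling limits, so one must instead pin down a good doubling scale by pigeonholing and then carry fully explicit constants through the Poincar\'e inequality, the reverse Poincar\'e inequality and the Colding--Minicozzi counting lemma --- this is essentially what forces the double-exponential threshold on $R_0$. The deeper obstacle is the effective passage from the representation $\rho$ to the $\Z$-quotient, which demands uniform, effective substitutes for the Tits alternative and the Milnor--Wolf theorem valid for the particular linear images arising here; it is the structural and linear-algebraic bookkeeping at this point --- not the harmonic analysis --- that consumes most of the effort, and it is the residual, non-effective ``polycyclic $\Rightarrow$ virtually nilpotent'' step that is responsible for the loss of effectivity in the theorem as finally stated.
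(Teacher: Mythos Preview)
Your overall architecture --- induct on $d$, use Kleiner's harmonic-function space to get a finite-dimensional representation, extract a $\Z$-quotient, and descend --- matches the paper's. But the decisive step, where you pass from the representation $\rho:G\to\mathrm{GL}(\mathcal H)$ to a $\Z$-quotient, is handled by invoking ``an effective form of the Tits alternative together with an effective Milnor--Wolf analysis.'' This is a genuine gap: no such effective Tits alternative is available in the generality you need, and the paper's main innovation is precisely to \emph{avoid} it. The paper observes that $\rho$ preserves the Lipschitz seminorm, so the image lies in a \emph{compact} group; a pigeonhole (box principle) then gives a finite-index subgroup whose generators map near the identity, and iterated commutators collapse quadratically toward the identity (a Solovay--Kitaev/Zassenhaus--Kazhdan--Margulis mechanism). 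After $O(\log d)$ commutator steps the image is essentially trivial, which is what forces the growth drop for $(G')^{(l)}$. Your route through $\mathrm{GL}_n(\R)$ and Tits does not see this compactness and leaves you with exactly the effectivity problem the paper is designed to solve.

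A second, related gap is your treatment of the finite case. You ``suppress this wrinkle'' and work with genuine harmonic functions on infinite $G$, but the whole point of a single-scale hypothesis is that $G$ may well be finite (of enormous cardinality), in which case the only harmonic functions are constants and your representation is trivial. The paper handles this uniformly by working throughout with $\eps$-almost-harmonic Lipschitz functions (Proposition~\ref{infilip} and Theorem~\ref{qkt}); the representation is then only \emph{approximately} multiplicative (Proposition~\ref{perp}), and one must carry $\eps$-errors through the commutator collapse. This is not a cosmetic adjustment: without it there is no nontrivial $\mathcal H$ to act on in the finite case, and the induction cannot start. Your final remark that the non-effectivity lives in the ``polycyclic $\Rightarrow$ virtually nilpotent'' step is correct and agrees with the paper, which isolates that passage via a compactness argument (Section~\ref{compact-sec}) and Proposition~\ref{qmw}.
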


In order to simplify the exposition somewhat, we do not give an effective bound for $K(R_0,d)$ in our arguments, relying instead on an ineffective compactness argument to establish its finiteness.  However, it is possible to eradicate this compactness argument from the proof by standard ``quantifier elimination'' techniques, at the cost of making it substantially lengthier (and the final bound for $K(R_0,d)$ obtained is quite poor, of Ackermann type in $d$).  On the other hand, the arguments do give an effective value for $C$ (it appears that
$C=100$ works, even if for clarity we won't keep track of this aspect, in which 
no tightness is claimed). 
We discuss full effectivization issues in Section~\ref{effective}, along
with the following result:

\begin{theorem}[Fully quantitative weak Gromov theorem]\label{polycyclic} 
Let $d > 0$ and $R_0 > 0$, and assume that
$$
R_0 \geq \exp(\exp(Cd^C))
$$
for some sufficiently large absolute constant $C$.  Then every 
$(R_0,d)$-growth group has a normal subgroup of index at most 
$\exp(R_0^{\exp(exp(d^C))})$ which is polycyclic. 
\end{theorem}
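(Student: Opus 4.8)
The plan is to revisit the proof of Theorem~\ref{main-thm}, which is effective at every step except the very last one, where an (ineffective) compactness argument upgrades a ``virtually polycyclic'' conclusion to ``virtually nilpotent''. Dropping that last step and tracking all quantities explicitly yields Theorem~\ref{polycyclic}. Recall the shape of the argument: from the effective finite-dimensionality (in terms of $d$) of the space of Lipschitz harmonic functions on $(G,S)$ one extracts a \emph{dichotomy} --- either $|G|$ is at most an explicit $N(R_0,d)$, or $(G,S)$ has a quantitative finite-index subgroup $(G',S')$ (a $(K_1,R_1)$-subgroup in the sense of Definition~\ref{qfi-def}, with $K_1,R_1$ explicit and at most a fixed power of $R_0$) carrying a surjection $\phi\colon G'\twoheadrightarrow\Z$. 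Iterating this along successive kernels produces a finite chain of groups, each obtained from its predecessor by passing first to a quantitative finite-index subgroup and then to the kernel of a surjection onto $\Z$, and terminating at a finite group. Reading the chain from the bottom up --- using that extensions of polycyclic groups by $\Z$ and by finite groups are polycyclic --- exhibits a finite-index subgroup of $G$ as polycyclic; the gain of Theorem~\ref{polycyclic} over Theorem~\ref{main-thm} is precisely the \emph{explicit} index bound, which is non-trivial content even when $G$ is finite.

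In detail: \emph{Step 1 (effective dichotomy).} Extract the dichotomy above from the harmonic-function part of the proof of Theorem~\ref{main-thm}, with all constants explicit; arrange in addition that $\phi$ sends each generator in $S'$ to an integer of size $O(1)$ and that some chosen $t\in G'$ with $\phi(t)=1$ has $\|t\|_{S'}=O(1)$. \emph{Step 2 (effective Milnor trick).} Show that $H:=\ker\phi$ is generated by $S_H:=\{\,t^i s t^{-i}: s\in S',\ |i|\le M\,\}$ for an explicit $M=M(R_0,d)$: the classical proof of finite generation of $H$ uses polynomial growth only qualitatively, but a pigeonhole/volume argument against the single-scale bound \eqref{bsrd} --- any element of $H$ in a bounded ball that projects into a bounded interval of $\Z$ must already be such a word with $|i|$ controlled, else one produces too many elements of $B_S(R_0)$ --- pins down $M$. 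With this generating set $(H,S_H)$ is again a growth group: the elements $h t^j$ with $h\in B_H(r)$, $|j|\le cr$ are distinct and lie in $B_{S'}(Cr)$, so quotienting $G'$ by $\Z$ lowers the order of growth by one, while the passage from $G$ to $G'$ is controlled by Remark~\ref{growth}; thus $(H,S_H)$ is an $(R_0',d')$-growth group with $R_0'$ a near-full power of $R_0$ and $d'\le d-1+o(1)$.

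\emph{Step 3 (iteration and bookkeeping).} Apply Steps~1--2 to $(H,S_H)$ and repeat. Since $d'$ drops by essentially one each step while $R_0$ degrades only to $R_0^{1-\kappa}$ (Remark~\ref{growth}) and $d$ inflates by $(1-\kappa)^{-1}$, taking $\kappa\sim 1/d$ keeps the cumulative losses bounded over the $O(d)$ steps, so $R_0$ stays above $\exp(\exp(C(d')^C))$ throughout and the process halts at a finite group within $\le d$ steps. Each finite-index passage has index at most $|B_S(K)|$ for the current explicit $K$, which is polynomial in the current $R_0$ (the coset representatives lie in a bounded $S$-ball), so the composite finite-index subgroup of $G$ has index polynomial in $R_0$ and is polycyclic by the closure properties above. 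Replacing it by its normal core --- a finite intersection of conjugates, since a finitely generated group has finitely many subgroups of a given index --- gives a \emph{normal} polycyclic subgroup of explicitly bounded index, at worst exponential in $R_0$, which is of the claimed form $\exp(R_0^{\exp(\exp(d^C))})$; the tower of exponentials in the statement leaves ample room for the intermediate and normal-core losses, so I would not optimize here.

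The main obstacle is Step~2: making Milnor's finite-generation argument quantitative, i.e.\ producing the explicit conjugation range $M(R_0,d)$ from the single-scale hypothesis \eqref{bsrd} alone, and verifying that the resulting $S_H$ sits in a bounded $S$-ball, so that $(H,S_H)$ is a genuine quantitative subgroup feeding the next iteration. A secondary difficulty, already in Step~1, is to ensure that ``$G'$ surjects onto $\Z$'' is witnessed at the finite scale $R_0$ rather than only in a limit, and that the harmonic-function dimension bound (hence $N(R_0,d)$, $K_1$, $R_1$) is explicit --- this is exactly where the polycyclic argument parts ways with the proof of Theorem~\ref{main-thm}, whose nilpotent conclusion instead forces the compactness (or Ackermann-type) final step.
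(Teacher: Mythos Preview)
Your plan differs from the paper's route and, as stated, introduces obstacles that the paper simply avoids.

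The paper does not extract a surjection $\phi\colon G'\twoheadrightarrow\Z$ from the harmonic-function argument. What the Kleiner-type machinery actually produces (Proposition~\ref{trivcor2}) is an effective dichotomy of a different shape: either $G=B_S(R_0^{\exp(\exp(O(d)^{O(1)}))})$ is finite of explicitly bounded size, or there is an effective $(R_0^{1/10},R_0^{1/10})$-subgroup $(G',S')$ and some $l=O(d^{O(1)})$ such that the \emph{derived subgroup} $(G')^{(l)}$, with explicit generators in $B_{S'}(R_0^{1/10})$, is an $(R,d-0.9)$-growth group. The mechanism is the commutator-collapse argument of Section~\ref{kleinersec-4}: elements of $(G')^{(l+1)}$ act almost trivially on the finite-dimensional space of almost-harmonic Lipschitz functions, forcing the growth drop. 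No homomorphism to $\Z$ appears; instead one passes down the derived series, and the quotient $G'/(G')^{(l)}$ is solvable by construction. Iterating Proposition~\ref{trivcor2} $O(d)$ times gives an effective chain terminating at a finite group (Theorem~\ref{polycyclic2}), with each link either a bounded-index passage or a commutator-subgroup passage. Polycyclicity then follows from the closure properties you mention, and normality with the stated index bound from a short factorial-type lemma (if $G'\le G$ has index $\le I$ and contains a polycyclic normal subgroup of index $\le J$, then $G$ has a polycyclic normal subgroup of index $\le (IJ)!$).

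So your Step~1, as written, is not what the harmonic-function part delivers, and your Step~2 (the quantitative Milnor trick for $\ker\phi$) is then unneeded for the polycyclic conclusion. The ``main obstacle'' you identify --- making Milnor's finite-generation-of-the-kernel argument effective at a single scale --- is a real difficulty, and is precisely what the derived-series route circumvents: passing from $G'$ to $(G')^{(l)}$ with explicit generators is handled by Lemma~\ref{zonk} (which \emph{is} the quantitative commutator-generation lemma), not by controlling a kernel of a $\Z$-valued map. Your overall architecture (iterate a growth-drop dichotomy, read off polycyclicity from the chain, take a normal core) is the right shape; the correction is to replace ``surjection onto $\Z$ plus Milnor trick'' by ``pass to an iterated derived subgroup with explicit generators and reduced growth order'', which is exactly Proposition~\ref{trivcor2}.
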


This result is in fact established along 
Sections \ref{kleinersec-1}-\ref{kleinersec-5} (see Proposition~\ref{trivcor2} below),
and can be recommended to the reader as a natural ``resting point''  
along the way to 
the full proof of the main Theorem~\ref{main-thm}. It captures the 
quantitative 
 outcome made out of 
Kleiner's approach, and avoids completely the Milnor-Wolf  
part of the proof, which is only made semi-quantitative here.
Note also that a completely effective version of Theorem~\ref{main-thm}
is still available when the group is assumed torsion free;
see Corollary~\ref{torsion-free}.

We have the following immediate corollary of Theorem \ref{main-thm}:

\begin{corollary}[Slightly super-polynomial growth implies virtual nilpotency]  Let $(G,S)$ be a finitely generated group such that
$$ |B_S(R)| \leq R^{c (\log \log R)^c}$$
for some $R > 1/c$, where $c>0$ is a sufficiently small absolute constant.  Then $G$ is virtually nilpotent.
\end{corollary}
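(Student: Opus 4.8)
The plan is to deduce this corollary directly from Theorem~\ref{main-thm}, the point being to choose the growth exponent $d$ so small (growing only like a power of $\log\log R$) that the double-exponential lower bound on the scale $R_0$ is satisfied automatically.

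Concretely, I would set $R_0 := R$ and $d := c(\log\log R)^c$. Since $c$ is a small absolute constant, the hypothesis $R > 1/c$ forces $R$ to be large, so $\log\log R$ is well-defined and (in fact comfortably) positive; hence $d > 0$, and the hypothesis $|B_S(R)| \le R^{c(\log\log R)^c}$ says exactly that $(G,S)$ is an $(R_0,d)$-growth group in the sense of Definition~\ref{rdg-def}. It then remains to check the hypothesis $R_0 \ge \exp(\exp(Cd^C))$ of Theorem~\ref{main-thm}. Taking logarithms twice, this is equivalent to $\log\log R_0 \ge Cd^C = Cc^C (\log\log R_0)^{cC}$; writing $t := \log\log R_0 = \log\log R$ and using $cC < 1$ (valid once $c$ is small enough relative to the absolute constant $C$), it reduces to $t^{1-cC} \ge Cc^C$, i.e.\ $t \ge (Cc^C)^{1/(1-cC)}$. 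Since $R > 1/c$ gives $t > \log\log(1/c)$, and since $\log\log(1/c) \to \infty$ while $(Cc^C)^{1/(1-cC)} \to 0$ as $c \to 0$, this inequality holds with room to spare provided $c$ is chosen sufficiently small.

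With the hypotheses of Theorem~\ref{main-thm} verified, I would conclude that $(G,S)$ is $(K,K,C^d,C^d)$-virtually nilpotent for some finite $K = K(R_0,d)$; that is, $G$ contains a $(K,K)$-subgroup $(G',S')$ which is nilpotent of step at most $C^d < \infty$. By Remark~\ref{joe} a $(K,K)$-subgroup has finite index, so $G'$ is a finite-index nilpotent subgroup of $G$, and therefore $G$ is virtually nilpotent. There is no real obstacle here: the corollary is simply the specialization of Theorem~\ref{main-thm} to the borderline growth rate at which the double-exponential condition on $R_0$ becomes vacuous, and the only computation involved is the elementary two-logarithm estimate above, which is satisfied easily for small $c$.
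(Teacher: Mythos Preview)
Your argument is correct and is exactly what the paper intends: the corollary is stated as an ``immediate corollary of Theorem~\ref{main-thm}'' with no proof given, and your computation spelling out why $d=c(\log\log R)^c$ makes the condition $R_0\ge\exp(\exp(Cd^C))$ automatic for small $c$ is precisely the omitted verification.
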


\begin{remark}
By using completely ineffective compactness arguments, it has been well
known that Gromov's theorem implies the existence of {\it some} super-polynomial growth function for which
the Corollary holds, although no such explicit function was known before. 
It has been proposed that the subradical growth type $\exp(C{\sqrt n})$ might work (see~\cite{Gri},~\cite{Lu-Ma} for some results in this direction). It is also interesting to
compare the situation with Segal's refutation~\cite{Seg} of a conjecture
of Lubotzky, Pyber and Shalev~\cite{LPS} concerning the growth type of
the number of finite index subgroups of a residually finite group (as a function of the index). It has been shown in~\cite{Seg} that here no super-polynomial 
bound could
yield the same characterization as the polynomial one,
of being virtually solvable of finite rank  
(a remarkable result of Lubotzky, Mann and Segal~\cite{LMS}, which relies, among other things, on
Lazard's deep $p$-adic analogue of Hilbert's $5^{\operatorname{th}}$ problem~\cite{Laz}). 
\end{remark}

The nilpotent group generated by Theorem \ref{main-thm} has step at most $C^d$, but an inspection of the proof shows that it also has Hirsch length (i.e., sum
of torsion free ranks of the abelian quotients in a grading)
 at most $C^d$.  It is a standard computation (which also follows from
the well known Bass-Guivarc'h formula for the growth of nilpotent groups), that 
nilpotent groups of 
Hirsch length $r$ and step $s$ have polynomial growth of order at 
most $O( rs )$.  We thus conclude:

\begin{corollary}[Polynomial growth at one scale implies polynomial growth at all scales]\label{allscales}  Let $d > 0$ and $R_0 > 0$, and assume that
$$
R_0 \geq \exp(\exp(Cd^C))
$$
for some sufficiently large absolute constant $C$.  Let $G$ be a $(R_0,d)$-growth group.  Then one has 
$$ |B_S(R)| \leq K R^{C^d}$$
for all $R \geq 1$, where $K = K(R_0,d)$ depends only on $R_0,d$.
\end{corollary}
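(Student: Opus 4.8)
The plan is to read Corollary~\ref{allscales} off directly from Theorem~\ref{main-thm} together with the two standard facts about the growth of nilpotent groups recalled just before its statement; the one delicate point will be keeping the implied constant dependent on $R_0$ and $d$ only.

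First, Theorem~\ref{main-thm} supplies a $(K_0,K_0)$-subgroup $(G',S')$ of $(G,S)$, with $K_0=K(R_0,d)$, which is $(s,D)$-nilpotent with $s,D\le C^d$ and --- as noted in the paragraph just before the Corollary, by inspecting the proof --- of Hirsch length at most $C^d$. By the Bass--Guivarc'h formula $G'$ then has polynomial growth of degree $O(C^{2d})$, which we absorb into $C^d$ by enlarging the absolute constant $C$ (permitted by the statement), so that $|B_{S'}(m)|\le A\,m^{C^d}$ for all $m\ge 1$ and some $A\ge 1$. Next I transfer this to $G$: since $(G',S')$ is a $(K_0,K_0)$-subgroup, iterating the inclusion $B_S(r+1)\subset B_S(r)\cdot B_{S'}(K_0)$ of Remark~\ref{joe} (valid for $r\ge K_0$) and using $B_{S'}(K_0)^j\subset B_{S'}(K_0 j)$ gives $B_S(m)\subset B_S(K_0)\cdot B_{S'}(K_0 m)$, hence $|B_S(m)|\le |B_S(K_0)|\cdot A\,(K_0 m)^{C^d}$, for every $m\ge K_0$, while the range $1\le m\le K_0$ is handled by the trivial bound $|B_S(m)|\le |B_S(K_0)|$. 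The factor $|B_S(K_0)|$ is harmless: since $R_0>1$, \eqref{bsrd} gives $|S|\le |B_S(R_0)|\le R_0^d$, so $|B_S(K_0)|\le (K_0+1)R_0^{dK_0}$, a quantity depending only on $R_0,d$ because $K_0=K(R_0,d)$ does. Collecting these bounds yields $|B_S(R)|\le K'\,R^{C^d}$ for all $R\ge 1$, where $K'$ gathers the $K_0$-dependent factors together with $A$.

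The step I expect to be the real obstacle is the last one: showing that $A$ --- the leading coefficient of the polynomial growth of $G'$ --- may be taken to depend only on $R_0$ and $d$, not on $G'$. A priori this fails (a cyclic group of huge order $N$ is an $(R_0,d)$-growth group once $d$ slightly exceeds $1$, yet $|B_{S'}(m)|$ plateaus at $N$), so one must exploit that the growth hypothesis \eqref{bsrd} at the \emph{single} scale $R_0$ already pins $A$ down: heuristically, if $|B_S(R)|\le A R^{\ell}$ with $\ell\le C^d$ then comparing with $|B_S(R_0)|\le R_0^d$ near $R=R_0$ forces $A\lesssim R_0^{d}$, whence $|B_S(R)|\lesssim R_0^d R^{\ell}\le R_0^d R^{C^d}$ for $R\ge R_0$ (since $(R/R_0)^{\ell}\le R^{C^d}$), while $R<R_0$ is trivial by \eqref{bsrd}. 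Making this rigorous requires a comparison of the growth of $G'$ across scales $\ge R_0$ with a constant controlled by the structural parameters of $G'$ (number of generators and step, both at most $C^d$, so that the resulting bound --- however crude a function of $d$ --- is still a function of $R_0,d$ alone); alternatively, and most in keeping with the rest of the paper, one folds the whole assertion into the same ineffective compactness argument already used to produce $K(R_0,d)$, the space of marked $(R_0,d)$-growth groups being compact because $|S|\le R_0^d$ and each such group satisfying a bound of the asserted shape with a finite constant by the above. Either route gives $|B_S(R)|\le K(R_0,d)\,R^{C^d}$ for all $R\ge 1$, as claimed.
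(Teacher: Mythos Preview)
Your proof is correct and matches the paper's route exactly: the paper too deduces the corollary in one line from Theorem~\ref{main-thm}, the Hirsch-length bound $\le C^d$, and Bass--Guivarc'h, with the transfer from $G'$ back to $G$ left implicit. Your extra care about the dependence of the leading constant $A$ on $(R_0,d)$ alone --- resolved by folding it into the same compactness argument as in Section~\ref{compact-sec} (the limit group being finitely presented, late terms of the sequence are quotients of it and hence have no larger balls at any scale) --- is a point the paper leaves entirely unaddressed.
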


Thus, polynomial growth of order $d$ at one (moderately large) scale 
implies polynomial 
growth of order $O(1)^d$ at all subsequent scales.  We do not know if this exponential loss in the polynomial growth degree is necessary.

\subsection{Overview of proof}

Our arguments broadly follow the strategy used by Kleiner\cite{kleiner} to prove Gromov's theorem, being based in particular on the study of harmonic 
functions on the Cayley graph associated to $(G,S)$.   
Kleiner's proof proceeded, roughly, along the following steps:

\begin{itemize}
\item[(i)] If $G$ is infinite amenable (or is merely without property (T)), then it admits a fixed point free 
affine action on a Hilbert space, one of its orbit maps being a
Hilbert space valued non-constant Lipschitz harmonic function on $G$.  After taking a section, this implies the existence of a scalar non-constant Lipschitz harmonic function.
\item[(ii)] If $G$ has polynomial growth, then the space of real valued Lipschitz (or fixed polynomial growth) harmonic functions on $G$ is finite-dimensional.
\item[(iii)] From (i) and (ii), an infinite group $G$ of polynomial growth admits a finite-dimensional representation with infinite image, or equivalently, a normal subgroup $H$ whose quotient $G/H$ is an infinite linear group.
\item[(iv)] Any linear group of polynomial growth is virtually solvable.
\item[(v)] From (iii) and (iv), an infinite group $G$ of polynomial growth admits a normal subgroup $H$ with virtually solvable -- or better yet 
virtually infinite cyclic --
quotient  $G/H$.
\item[(vi)] The group $H$ obtained in (v) as the kernel of a $\Z$-target homomorphism of a finite index subgroup of $G$ has slower growth, and 
thus (by induction) is virtually nilpotent. Hence $G$ is virtually solvable (in
fact, polycyclic).
\item[(vii)] Any virtually polycyclic (or merely solvable) group of 
polynomial growth is virtually nilpotent.
\end{itemize}

The idea is then to make the proofs of the statements (i)-(vii) as elementary as possible, so that they may be made quantitative. Note that
for these purposes, statements involving 
infinite objects
must be re-proved in a finitary version, and in the fully effective results
(e.g. Theorem~\ref{polycyclic})
only finitely many elements of the group are actually involved. This is 
a rather non-conventional difficulty in geometric group 
theory, where Gromov's theorem is a cornerstone. We next remark
on our modification of these steps.

\smallskip
Statement (i) is made quantitative in Section \ref{kleinersec-1}.  The proof given in \cite{kleiner} uses ultralimits and considerations related to Kazhdan's 
Property (T), and is thus of a qualitative nature. However, it turns out 
that one can use the spectral theory of the Laplacian and Young's inequality, to obtain a quantitative version of an existence theorem for non-constant
Lipschitz harmonic functions on all infinite groups.

\smallskip
Statement (ii), which we discuss in  Section \ref{kleinersec-2}, 
is the heart of Kleiner's approach (inspired by the related work of 
Colding-Minicozzi~\cite{cold}).
His arguments are already quite quantitative. Two noticeable (related)
 difficulties that arise when trying to quantify Kleiner's proof 
are that the scale $R$ for which a 
finite dimensional space of harmonic
functions injects into the ball $B(e,R)$ is not effective, and that in this
scale one lacks a priori lower bound on the positive value of the
determinant of an associated positive definite quadratic form.
 
\smallskip
For statement (iii), unlike the abstract setting of Kleiner's proof,
 the space we work with is the one appearing in 
Statement (ii) (modulo the constants),
in which the group operates naturally ({\it preserving the Lipschitz norm}). 
This is trivial in the qualitative world, but does require a certain amount of 
care in the quantitative setting (for instance, one needs quantitative versions of the assertion that any finite-dimensional vector space has a basis), and 
is done in Section \ref{kleinersec-3}.  

\smallskip
Statement (iv), established in Section~\ref{kleinersec-4},  
was proven in \cite{kleiner} 
(as in Gromov's original~\cite{gromov}) using the Tits alternative, 
or by a weaker variant of that alternative due to the 
first author \cite{shalom}. However, for our purposes we provide
a completely different, elementary proof, which makes crucial use of the
fact (not exploited previously) that the linear representation in (iii) 
ranges in a {\it compact} Lie group (the one preserving the Lipschitz 
norm in (iii)). Our arguments here are inspired by   
the famous Solovay-Kitaev theorem \cite{sk} in the theory of 
quantum computations, which
itself may be viewed as a variant on the well known 
Zassenhaus-Kazhdan-Margulis Theorem (cf.~\cite[Section 4.12]{Kap}).
As in the aforementioned results, we obtain a quantitative version of (iv)
based on the fact that commutators of matrices near the identity 
collapse fast towards it, and hence cannot be non-trivially accommodated
without a ``rapidly growing supply'' of group elements, which will be incompatible with the polynomial growth hypotheses.  Furthermore, the compactness of the ambient group ensures the existence of a finite index subgroup of matrices that are close enough to the identity that the previous considerations apply.

\smallskip
Statement (v) is formalized in Proposition \ref{trivcor2}, and 
follows simply by putting all the previous statements together.

\smallskip
Statements (vi), (vii) are formalized together as Proposition \ref{qmw}.  Statement (vii) is a result of Milnor\cite{milnor} and Wolf\cite{wolf}.  The arguments of Milnor quickly allow us to reduce to a polycyclic setting in which $G$ is an extension of a cyclic group by a virtually nilpotent group.  The main task, as in \cite{wolf}, is to then show that outer automorphisms of
 virtually nilpotent groups which are of polynomial growth, are necessarily virtually unipotent.  To do this, one must first eliminate the torsion from the nilpotent group, and then after taking quotients one is faced with understanding linear transformations of polynomial growth on a free abelian group $\Z^d$.  But this can be handled using known results on Mahler measure of algebraic integers.  It turns out that all these steps can be made semi-quantitative (in the sense that one does not control the index behind the modifier ``virtually''), and this is done in Sections \ref{wolfsec-1}-\ref{wolfsec-4}; this then gives a quantitative (but ineffective) proof of Theorem \ref{main-thm} by a compactness argument given in Section \ref{compact-sec}.  In Section \ref{effective} we discuss the changes needed to make the semi-quantitative proof fully effective.

\smallskip
There are quite a few applications of Gromov's theorem where an effective
version of it is of interest. We illustrate this only briefly in one geometric 
setting which was discussed in Gromov's original paper~\cite{gromov} as 
Corollary~\ref{fundamental} below.

\smallskip
Finally, as a by-product of our effort one obtains a simplified, 
particularly accessible proof of Gromov's 
original qualitative theorem. It avoids also the Tits alternative 
and any use of $p$-adic
numbers, and can be fully digested using basic background in 
linear algebra and calculus. This soft proof is presented in our companion
paper~\cite{Sh-Ta}, along with various other results and questions related
to the current work and around
the theme of Lipschitz harmonic functions on groups.

\subsection{Comparison with other work}

In the literature there seems to be only one previous effective result
related to Gromov's theorem, due to van den Dries and Wilkie~\cite{Wi-Dr}, 
which handles the subquadratic growth case $d<2$ by using clever, elementary, 
and quite combinatorial arguments.  (The case $d<1$ is trivial; see Lemma \ref{base}.)

 A related quantitative formulation of Kleiner's argument was recently given by Lee and Makarychev \cite{mak}.  They worked in the setting of a finite group $(G,S)$ obeying a doubling condition $|B_S(2R)| \leq 2^d |B_S(R)|$ at {\it all} 
scales $R > 0$, as opposed to the (weaker) assertion of being 
an $(R_0,d)$-growth group at a single scale.  Using Kleiner's method, they obtained upper bounds (roughly of the order of $\exp( O( d^2 ) )$) on the multiplicity of eigenvalues of the Laplacian, and also obtained a 
subgroup of $G$ of index bounded by $\exp(\exp(O( d^2 ) ))$ which had a homomorphic image onto a cyclic group $\Z/M\Z$ of cardinality at least $M \gg \exp(-O(d)) |G|^{\exp(-O(d^2))}$.  In the infinitary setting, the presence of a 
bounded index subgroup with a large homomorphic image can be used to locate a commutator subgroup with a reduced order of growth, to which one can apply an induction hypothesis to obtain a Gromov-like theorem.  Unfortunately, a 
technical obstruction in this finitary case is that the doubling condition 
in \cite{mak} is required at all scales (up to the diameter of $G$), 
whereas the homomorphic image only yields a growth reduction up to scale $M$ 
or so.  Our main Theorem \ref{main-thm} can be viewed as an answer to a question raised in \cite{mak}, regarding whether Kleiner's methods can be 
adapted to the finitary setting assuming a polynomial growth hypothesis rather than a doubling condition.

Another related result, of Milnor-Wolf type, was obtained recently by the second author in \cite{tao-solv}.  There, it was shown that if $G$ is 
a $(R_0,d)$-growth group which is solvable of derived length at most $l$, with $R_0$ sufficiently large depending on $l, d$, then $G$ contains a nilpotent group of step at most $s(l,d)$ and index at most $R_0^{C(l,d)}$ for 
some $s(l,d), C(l,d)$ depending only on $l$ and $d$.  This result has a much better control on the index of the nilpotent subgroup 
than Theorem \ref{main-thm}, but  it
gives a much poorer (though still effective) bound on the step, or on the 
size of $R_0$ required, and is, in addition, restricted to the solvable case. 
The methods used in that paper are quite different from those here, replying on
additive combinatorics rather than the theory of harmonic functions and linear
representations. By combining the results in \cite{tao-solv} with Theorem \ref{polycyclic2}, one can obtain a variant of Theorem \ref{main-thm} in which the bound $K(R_0,d)$ is completely effective, but the quantities $\exp(\exp(Cd^C))$ and $C^d$ appearing in that theorem are replaced by much larger (but still effective and explicit) functions of $d$.  We omit the details.

\subsection{Acknowledgments}

The authors thank Emmanuel Breulliard for valuable discussions, and the anonymous referee for corrections. The first
author was supported by ISF and NSF grants number 500/05 and DMS-0701639 resp. 
The second author is supported by a grant from the MacArthur Foundation, by NSF grant DMS-0649473, and by the NSF Waterman award. 

\section{Notation}

If $E, F$ are two subsets of a multiplicative group $G$, we use $E \cdot F$ to denote the product set $\{ ef: e \in E, f \in F\}$, and $E^{-1}$ to denote the inverse set $\{e^{-1}: e \in E \}$.  In an additive group we can similarly define the sum set $E+F$ and reflection $-E$.  We also define dilates $k \cdot E := \{ ke: e \in E \}$ for $E$ in an additive group and non-negative integers $k$.

Given a group $G$, we define the commutator $[g,h]$ of two group elements $g,h \in G$ by $[g,h]:=ghg^{-1}h^{-1}$, and the commutator of two subgroups $[H,K]$ to be the group generated by $\{ [h,k]: h \in H, k \in K \}$.  We define the \emph{lower central series} $G = G_1 \geq G_2 \geq \ldots$ by $G_1 := G$ and $G_{i+1} := [G_i, G]$, and the \emph{derived series} $G = G^{(1)} \geq G^{(2)} \geq \ldots$ by $G^{(1)} := G$ and $G^{(i+1)} := [G^{(i)}, G^{(i)}]$.  We say that $G$ is \emph{nilpotent of step at most $s$} if $G_{s+1}$ is trivial, and \emph{solvable of derived length at most $l$} if $G^{(l+1)}$ is trivial.

We write $X = O(Y)$, $X \ll Y$ or $Y \gg X$ to denote the statement that $|X| \leq CY$ for some absolute constant $C$.

\section{A compactness reduction}\label{compact-sec}

In this section we perform a compactness reduction to eliminate the role of the quantities $K(R_0,d)$ appearing in Theorem \ref{main-thm}.  This will simplify the proof substantially, at the cost of rendering the final value of $K(R_0,d)$ obtained ineffective; but see Section \ref{effective} for how one could avoid the use of compactness to obtain an effective value of $K(R_0,d)$.  

We first remove $K$ and $R$ from the definition of virtual nilpotency.

\begin{definition}[Virtual nilpotency, again]\label{virtnil-def2}  Let $s, D \geq 1$.  A finitely generated group $(G,S)$ is said to be \emph{virtually $(s,D)$-nilpotent} if it contains a finite index subgroup that is $(s,D)$-nilpotent.
\end{definition}

We now claim that Theorem \ref{main-thm} follows from the following ``semi-quantitative'' variant:

\begin{theorem}[Semi-quantitative Gromov theorem]\label{main-thm2}  Let $d > 0$ and $R_0 > 0$, and assume that
$$
R_0 \geq \exp(\exp(Cd^C))
$$
for some sufficiently large absolute constant $C$.  Then every $(R_0,d)$-growth group is virtually $(C^d,C^d)$-nilpotent.
\end{theorem}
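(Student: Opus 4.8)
The plan is to implement the seven-step strategy (i)--(vii) sketched in the overview, but only in semi-quantitative form: all numerical parameters except the index of the final virtually nilpotent subgroup are to be tracked explicitly, while indices behind ``virtually'' are allowed to be ineffective. We argue by induction on $d$ (or more precisely on the degree of polynomial growth that can be achieved). First I would dispose of the base cases: if $d < 1$, the group is finite and hence trivially virtually nilpotent (Lemma \ref{base}), so we may assume $d \geq 1$ and $R_0$ as large as needed. If $G$ is finite with $|G| \leq R_0^d$ we are again done by bounded index, so we may assume $G$ is infinite (or at least has large diameter compared to $R_0$).

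The main body of the argument runs as follows. Using the quantitative existence theorem for non-constant Lipschitz harmonic functions (Section \ref{kleinersec-1}, step (i)), together with the quantitative finite-dimensionality of the space of Lipschitz harmonic functions on an $(R_0,d)$-growth group (Section \ref{kleinersec-2}, step (ii)), one obtains a finite-dimensional real vector space $V$ of dimension at most $C^d$ on which a finite-index subgroup $G_1$ of $G$ acts linearly, preserving the Lipschitz seminorm; since that seminorm is positive definite modulo constants, $G_1$ maps into a compact orthogonal group $O(C^d)$ with infinite image (Section \ref{kleinersec-3}, step (iii)). Here is where the Solovay--Kitaev-type argument of Section \ref{kleinersec-4} enters (step (iv)): because the image lies in a \emph{compact} Lie group, after passing to a further bounded-index subgroup $G_2$ whose image lies in a small enough neighborhood of the identity, the ``commutators contract'' phenomenon forces the image to be virtually solvable --- in fact, combined with the polynomial growth hypothesis, one extracts a finite-index subgroup $G_3 \leq G$ admitting a homomorphism onto $\Z$ (steps (v), Proposition \ref{trivcor2}). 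Let $H = \ker \phi$ for this homomorphism $\phi : G_3 \to \Z$. By Remark \ref{growth}, $G_3$ is again a growth group at a comparable scale with a comparable degree, and a standard Milnor-type argument shows that $H$, with an appropriate generating set, is a growth group of strictly smaller degree (roughly $d-1$) at scale $R_0^{1-\kappa}$ for suitable $\kappa$; crucially $R_0^{1-\kappa}$ still satisfies the hypothesis $R_0^{1-\kappa} \geq \exp(\exp(C(d-1)^C))$ because we took $R_0$ doubly exponentially large. By the induction hypothesis, $H$ is virtually $(C^{d-1}, C^{d-1})$-nilpotent, hence $G_3$ (and so $G$) is virtually polycyclic. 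Finally, the quantitative Milnor--Wolf analysis of Sections \ref{wolfsec-1}--\ref{wolfsec-4} (step (vii), Proposition \ref{qmw}) upgrades ``virtually polycyclic of polynomial growth'' to ``virtually $(s,D)$-nilpotent'' with $s, D \leq C^d$, which is the claimed conclusion.

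The step I expect to be the main obstacle is the inductive descent via step (vi), specifically controlling the scale and growth degree of the kernel $H$. The difficulty is twofold: one must verify that the Milnor trick (writing elements of $B_{S_H}(R)$ using boundedly many conjugates of a generating set, exploiting $\phi : G_3 \to \Z$) genuinely drops the growth exponent by a definite amount at the relevant finite scale --- not just asymptotically --- and one must ensure that the scale at which $H$ inherits a growth bound is still astronomically large relative to the reduced degree, so that the induction hypothesis applies. This is what forces the doubly-exponential lower bound on $R_0$ in the statement: each of the $O(d)$ inductive steps costs one exponential, and the parameters $C^d$ compound geometrically. A secondary subtlety, flagged in the overview, is that the scale $R$ at which the harmonic function space embeds into the ball is not effective, which is precisely why this theorem is only semi-quantitative and why the compactness reduction of Section \ref{compact-sec} is invoked to extract the index bound $K(R_0,d)$ ineffectively; everything else in the chain above can be, and is, made fully quantitative.
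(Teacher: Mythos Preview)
Your overall inductive skeleton matches the paper's proof in Section \ref{reduct-sec}: base case via Lemma \ref{base}, growth reduction via Proposition \ref{trivcor2}, and the extension step via Proposition \ref{qmw}. However, you misdescribe what Proposition \ref{trivcor2} actually delivers and how the growth drop is obtained. The proposition does \emph{not} produce a homomorphism $\phi: G_3 \to \Z$ whose kernel has reduced growth by a Milnor-type conjugation argument. Rather, its conclusion (ii) states directly that an iterated derived subgroup $(G')^{(l)}$, with $l = O(d^{O(1)})$, is an $(R, d-0.9)$-growth group at some scale $R \geq R_0^{1-1/10d}$. The growth reduction is internal to the harmonic-function machinery (Corollary \ref{trivcor} and Proposition \ref{lorrange}): the commutator directions in $(G')^{(l+1)}$ act nearly trivially on the Lipschitz almost-harmonic function $u$, while $u$ must vary by $\gg R$ over $B_S(R)$; a counting argument then forces the cosets of $(G')^{(l+1)}$ to be numerous, cutting the growth exponent of $(G')^{(l+1)}$ itself. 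Consequently the quotient $G'/(G')^{(l)}$ is solvable of derived length $l$, not cyclic, and Proposition \ref{qmw} is applied once to the short exact sequence $0 \to (G')^{(l)} \to G' \to G'/(G')^{(l)} \to 0$ with that solvable quotient; the reduction to a cyclic base happens only inside the proof of Proposition \ref{qmw} (Section \ref{wolfsec-1}), not at the outer induction. Your two-stage ``first virtually polycyclic, then Milnor--Wolf'' phrasing and the appeal to a Milnor trick for the kernel of a $\Z$-map therefore do not reflect the paper's actual mechanism, even though the high-level induction on $d$ and the propositions you cite are the right ones.
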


We now give the (standard) compactness argument that lets us deduce Theorem \ref{main-thm} from Theorem \ref{main-thm2} (such argument appeared already
in Gromov's~\cite{gromov}, while the general formalism of the 
``space of marked finitely generated groups'' underlying it, was introduced
later by Grigorchuk in~\cite{Grigorchuk}). 

\begin{proof}[Proof of Theorem \ref{main-thm} assuming Theorem \ref{main-thm2}]
Let $C$ be the absolute constant in Theorem \ref{main-thm2}. Suppose Theorem \ref{main-thm} failed, then we could find $R_0, d$ obeying the specified bound (for this choice of $C$), and a sequence $(G_N,S_N)$ of $(R_0,d)$-growth groups with $N \to \infty$ such that $(G_N,S_N)$ is not $(N,N,C^d,C^d)$-virtually 
nilpotent. Observe that $|S_N| \leq |B_{S_N}(R_0)| \leq R_0^d$ is uniformly
bounded, hence by passing to a subsequence we may assume $|S_N|=k$ for all $N$ and some fixed $k$.
Identify now each $(G_N,S_N)$ as a quotient of the free group $(F_k,S)$
under the homomorphism mapping the free generators $S$ to $S_N$, and denote
the kernel by $M _N<F_k$. By diagonalization process and after 
passing to a subsequence, we may assume
that the sets $M _N$ converge, i.e. they eventually agree on every
finite subset of $F_k$. Their limit, $M$, is of course a normal
subgroup of $F_k$. Obviously $G=F_k/M$ with the projection of $S$ is a  
$(R_0,d)$-growth group, hence by  Theorem \ref{main-thm2} it  
has a finite index  $(C^d,C^d)$-nilpotent subgroup. By Remark~\ref{joe},
$(G,S)$ is $(K,K,C^d,C^d)$-subgroup for some $K$. To complete the argument it only
remains to observe the following three facts: 
\begin{enumerate}
\item The group $G$, being virtually nilpotent, is finitely presented;
\item If a sequence of marked groups converges to a finitely presented
group as above, then from some point on they are all quotients of it
(indeed, this happens when the finitely many relations of the limit group
stabilize in the sequence); and 
\item The property of being a 
$(K,K,C^d,C^d)$-nilpotent group is inherited by quotients.
\end{enumerate}
\end{proof}

For technical reasons it is convenient to modify the above definition slightly, by replacing the number $D$ of generators and the step $s$ with the \emph{Hirsch length} of the nilpotent group. Recall that the Hirsch length of a nilpotent group (or more generally, a polycyclic group) is the sum of the torsion-free ranks of the quotients in any normal series of that group with abelian quotients (e.g. for a nilpotent group, one could use the lower central series).  

\begin{definition}[Virtual nilpotency, yet again]\label{virtnil-def3}  Let $r \geq 1$.  A \emph{$r$-nilpotent group} is a finitely generated nilpotent group of Hirsch length at most $r$ (and thus step at most $r$, as well).  
A finitely generated group $(G,S)$ is said to be \emph{virtually $r$-nilpotent} if it contains a finite index subgroup that is $r$-nilpotent.
\end{definition}

By a result of Malcev~\cite{Mal2} if $G$ is nilpotent then any set $S \subset G$ whose
projection generates the abelianization $G/[G,G]$, generates all of $G$. 
Hence, a $r$-nilpotent 
group contains a finite index subgroup generated by at most $r$ generators (or $2r$, if one enforces symmetry).  Thus one may replace the conclusion of 
Theorem \ref{main-thm2} by the assertion that every $(R_0,d)$-growth group is virtually $C^d$-nilpotent.

It remains to prove Theorem \ref{main-thm2} with this modification.  This is the purpose of the remaining sections of the paper.

\section{Generator bounds}

In this section we collect a number of useful results which exploit polynomial growth hypotheses to locate bounded sets of generators for various types of groups; such results will be used frequently in the sequel.   

First, we observe that the property of having quantitative finite index is transitive.

\begin{lemma}[Transitivity of quantitative finite index]\label{trans} If $(G',S')$ is a $(K,R)$-subgroup of $(G,S)$, and $(G'',S'')$ is a $(K',R')$-subgroup of $(G',S')$, then $(G'',S'')$ is a $(KK'(K+RK'+1), RR')$-subgroup of $(G,S)$.  
\end{lemma}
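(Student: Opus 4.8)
The plan is to unwind the three defining conditions of a $(K,R)$-subgroup and verify each of them for $(G'',S'')$ sitting inside $(G,S)$, using the hypotheses on $(G',S') \le (G,S)$ and $(G'',S'') \le (G',S')$. Recall we must check: (a) $G''$ is a subgroup of $G$; (b) $S'' \subset B_S(RR')$; (c) $B_S(KK'(K+RK'+1)+1) \subset B_S(KK'(K+RK'+1)) \cdot B_{S''}(KK'(K+RK'+1))$. Condition (a) is immediate since $G'' \le G' \le G$. For condition (b), the point is a ``change of word length'' estimate: if $g \in B_{S'}(m)$, then writing $g$ as a word of length $\le m$ in $S'$ and replacing each letter of $S'$ (which lies in $B_S(R)$) by an $S$-word of length $\le R$, we get $g \in B_S(Rm)$; applying this with $g \in S'' \subset B_{S'}(R')$ gives $S'' \subset B_S(RR')$.

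The substance is condition (c). First I would record the general principle used implicitly throughout: from $B_S(K+1) \subset B_S(K)\cdot B_{S'}(K)$ one gets, by the iteration in Remark~\ref{joe}, that $B_S(r) \subset B_S(K) \cdot B_{S'}(K\cdot \text{something})$ — more precisely $B_S(r+1) \subset B_S(r)\cdot B_{S'}(K)$ for $r \ge K$, hence $B_S(n) \subset B_S(K)\cdot B_{S'}((n-K)K)$ for $n \ge K$ by induction on $n$. Similarly $B_{S'}(n') \subset B_{S'}(K')\cdot B_{S''}((n'-K')K')$ for $n' \ge K'$. I would then take an arbitrary element of $B_S(KK'(K+RK'+1)+1)$, peel off a factor in $B_S(K)$ to land in $B_{S'}(M)$ for an appropriate $M$ (using the change-of-length estimate to convert the $S$-radius into an $S'$-radius: an element of $B_S(n)$ that happens to lie in $G'$ need not have small $S'$-length in general, so the correct route is the containment just stated, which keeps everything inside $B_S(K)\cdot B_{S'}(\cdots)$), then peel off a factor in $B_{S'}(K')$ to land in $B_{S''}(M')$, and finally re-express the accumulated ``coset representative'' factors, which live in $B_S(K)\cdot B_{S'}(K')$, as an element of $B_S(K + RK')$ via the change-of-length estimate again ($B_{S'}(K') \subset B_S(RK')$). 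Bookkeeping the radii then yields exactly the claimed bound $KK'(K+RK'+1)$ on all three of the $S$-radius of the representative, the $S''$-radius of the tail, and the parameter governing the final Remark~\ref{joe}-type containment.

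I would set $K'' := KK'(K+RK'+1)$ and organize the computation as: choose the numbers so that (i) $K'' \ge K$ and $K''+1$ falls in the range where $B_S(K''+1)\subset B_S(K)\cdot B_{S'}((K''+1-K)K)$ applies; (ii) $(K''+1-K)K + K \le$ the radius needed so that, after the second peeling, $B_{S'}(\text{that}) \subset B_{S'}(K')\cdot B_{S''}(\cdots)$ with the $S''$-radius $\le K''$; (iii) the product $B_S(K)\cdot B_{S'}(K')$ of representatives sits in $B_S(K'')$. Each of (i)–(iii) is a one-line inequality in $K, R, K', R'$, and the specific form $KK'(K+RK'+1)$ is engineered precisely to make all of them hold simultaneously; so the ``proof'' is really the verification of these inequalities.

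The main obstacle — really the only subtlety — is the asymmetry noted in the Example after Definition~\ref{qfi-def}: a short word in $S$ that happens to represent an element of $G'$ can be very long as a word in $S'$, so one cannot naively ``project'' $B_S$-balls into $B_{S'}$-balls. The discipline that avoids this is to only ever convert $S'$-length to $S$-length (which is cheap, costing a factor of $R$) and never the reverse, and to use the $(K,R)$-subgroup condition in its one-sided iterated form $B_S(n) \subset B_S(K)\cdot B_{S'}((n-K)K)$ rather than trying to control $\|\cdot\|_{S'}$ of arbitrary elements. Once one commits to that bookkeeping the computation is routine; getting the constant to come out as stated is just a matter of being slightly generous at each step, which the factor $(K+RK'+1)$ accommodates.
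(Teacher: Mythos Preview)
Your proposal is correct and follows essentially the same route as the paper. The paper's proof iterates the defining inclusion to get $B_S(n)\subset B_S(K)\cdot B_{S'}(nK)$ and $B_{S'}(n')\subset B_{S'}(K')\cdot B_{S''}(n'K')$, composes them, converts the $B_{S'}(K')$ factor into $B_S(RK')$, sets $n=K+RK'+1$, and then enlarges the left-hand radius to $K''=KK'(K+RK'+1)$ by left-multiplying; this is exactly your ``peel off, peel off, re-express representatives'' plan, with the same one-directional discipline of only converting $S'$-length to $S$-length that you correctly identify as the key point.
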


\begin{proof} Clearly $S'' \subset B_{S'}(R') \subset B_S(RR')$.  Next, since
$$ B_S(K+1) \subset B_S(K) \cdot B_{S'}(K)$$
we have
$$ B_S(n) \subset B_S(K+n) \subset B_S(K) \cdot B_{S'}(nK)$$
for any $n \geq 1$.  Similarly
$$ B_S(n') \subset B_{S'}(K'+n') \subset B_{S'}(K') \cdot B_{S''}(n' K')$$
for all $n' \geq 1$.  Combining the two, we conclude that
$$ B_S(n) \subset B_S(K) \cdot B_{S'}(K') \cdot B_{S''}(nKK')$$
for all $n \geq 1$.  Since $B_{S'}(K') \subset B_S(RK')$, we conclude that
$$ B_S(n) \subset B_S(K+RK') \cdot B_{S''}(nKK').$$
Setting $n := K+RK'+1$, we conclude that
$$ B_S(K+RK'+1) \subset B_S(K+RK') \cdot B_{S''}(KK'(K+RK'+1)).$$
and thus
$$ B_S(KK'(K+RK'+1)+1) \subset B_S(KK'(K+RK'+1)) \cdot B_{S''}(KK'(K+RK'+1))$$
and the claim follows.
\end{proof}

Next, we observe that a $(R_0,d)$ group has a bounded number of generators, after passing to a finite index subgroup.

\begin{lemma}[Generator reduction of $(R_0,d)$-growth groups]\label{gen-red}  Let $d \geq 1$, $0 < \kappa < 1$, and $R_0 \geq 100^{1/\kappa}$, and let $(G,S)$ be a $(R_0,d)$-growth group.  Then there exists a $(R_0^\kappa, R_0^\kappa)$-subgroup $(G',S')$ of $G$ with $|S'| \ll O(1)^{d/\kappa}$.  In particular (by Remark \ref{growth}) $(G',S')$ is a $(R_0^{1-\kappa}, d/(1-\kappa))$-growth group.  Furthermore we have $G = G' \cdot B_S(R_0^\kappa)$.
\end{lemma}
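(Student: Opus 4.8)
The plan is to build the subgroup $G'$ greedily by a "covering" argument at scale $R_0^\kappa$. Set $r := R_0^\kappa$. Since $(G,S)$ is an $(R_0,d)$-growth group and $r \le R_0$, the ball $B_S(r)$ has at most $R_0^d = r^{d/\kappa}$ elements. I would choose a maximal subset $\{g_1,\dots,g_m\} \subset B_S(r)$ which is "$r$-separated" in the sense that no $g_i$ lies in $B_S(r) \cdot$ (the group generated by $g_1,\dots,g_{i-1}$ intersected with a bounded ball) — more precisely, run the standard doubling/packing argument: let $G'$ be generated by $S' := \{ g : \|g\|_S \le 2r+1,\ g \in G'\}$... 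Let me instead describe the cleaner route. Define $G'$ to be the subgroup generated by $B_S(2r+1) \cap G'$ is circular; so instead: among all elements of $B_S(r)$, pick a maximal $r$-net $x_1,\dots,x_m$ for the left-coset structure, i.e. such that the cosets $x_i \langle S' \rangle$...

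Here is the argument I would actually carry out. Let $G'$ be the subgroup of $G$ generated by the symmetric set $S' := B_S(r) \cap$ — no. The correct standard move: choose $x_1 = e$, and inductively choose $x_{i+1} \in B_S(r)$ to be an element not in $\bigcup_j x_j \cdot B_S(r)$ if one exists; since the balls $x_i \cdot B_S(r/2)$ (say) are disjoint in $B_S(2r)$ and each has size $\ge 1$, while $|B_S(2r)| \le |B_S(R_0)| \le R_0^d$ (using $2r \le R_0$, valid since $\kappa<1$ and $R_0$ large), the process terminates with $m \le R_0^d = O(1)^{d/\kappa}$ points, and $B_S(r) \subset \bigcup_i x_i \cdot B_S(r)$, hence in fact $B_S(r)$ is covered and moreover $G = \bigcup_i x_i \cdot G'$ where $G'$ is the group generated by $S' := \{ x_i^{-1} x_j : \|x_i^{-1} x_j\|_S \le r,\ x_i^{-1}x_j \text{ joins adjacent net points}\}$. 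Concretely, take $S'$ to be the set of all $x_i^{-1} s x_j$ with $s \in S$, $1\le i,j\le m$, that happen to lie in the subgroup generated by the "short" such elements — this is the Reidemeister–Schreier generating set, and it has size $\le m|S| \le m \cdot R_0^d = O(1)^{d/\kappa}$. Each such generator has $S$-length $\le r + 1 + r \le R_0^\kappa$ after adjusting constants (absorb the $+1$ into the constant $100^{1/\kappa}$ hypothesis on $R_0$), so $S' \subset B_S(R_0^\kappa)$. Symmetrize $S'$; this at most doubles it.

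Next I verify the $(R_0^\kappa, R_0^\kappa)$-subgroup property, i.e. $B_S(K+1) \subset B_S(K)\cdot B_{S'}(K)$ with $K = R_0^\kappa$. The Schreier transversal $\{x_i\}$ satisfies: for every $s \in S$ and every $i$, $x_i s = x_{j(i,s)} \cdot (x_{j(i,s)}^{-1} x_i s)$ with the second factor in $S'$ (it has $S$-length $\le 2r+1$). Iterating along a word of length $K$ in $S$ starting from $x_1 = e$, one peels off $K$ generators of $S'$ and is left with a single transversal element $x_i \in B_S(r) \subset B_S(K)$; this gives $B_S(K) \subset B_S(K) \cdot B_{S'}(K)$, and the same bookkeeping with a word of length $K+1$ gives $B_S(K+1) \subset B_S(r) \cdot B_{S'}(K+1)$, which after re-indexing $K$ (shrink $K$ by a constant, still comfortably below $R_0^\kappa$) yields the stated inclusion, and simultaneously $G = \bigcup_i x_i G' = B_S(R_0^\kappa) \cdot G'$, i.e. $G = G' \cdot B_S(R_0^\kappa)$ after taking inverses and using symmetry of the transversal ball. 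Finally, $(G',S')$ being a $(R_0^\kappa,R_0^\kappa)$-subgroup of a $(R_0,d)$-growth group with $\kappa \in (0,1)$, Remark \ref{growth} (applied with this $\kappa$) immediately gives that $(G',S')$ is a $(R_0^{1-\kappa}, d/(1-\kappa))$-growth group.

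The main obstacle is purely bookkeeping: one must make sure the Schreier generators genuinely have $S$-length at most $R_0^\kappa$ (not $2R_0^\kappa$ or $R_0^\kappa + O(1)$) and that $B_{S'}$-lengths of the peeled-off factors telescope correctly with $K = R_0^\kappa$ rather than some larger multiple. Both are handled by choosing the net at scale $r = R_0^\kappa / 3$ (say) instead of $R_0^\kappa$, so that all the additive slack — the "$+1$"s, the doubling to $2r$, the symmetrization — fits inside the budget $R_0^\kappa$; the hypothesis $R_0 \ge 100^{1/\kappa}$, i.e. $R_0^\kappa \ge 100$, is exactly what guarantees there is room for these constant adjustments. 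The size bound $|S'| \ll O(1)^{d/\kappa}$ then comes directly from $m \le |B_S(2r)| \le |B_S(R_0)| \le R_0^d = (R_0^\kappa)^{d/\kappa} \le (\text{const})^{d/\kappa}$ once one notes $R_0^\kappa$ can be taken to be a bounded power of a constant, or more simply from $m \le R_0^d$ and absorbing — actually one just keeps $m \le R_0^d$ and observes $|S'| \le 2m|S| \le 2R_0^{2d}$; to get the cleaner $O(1)^{d/\kappa}$ one instead packs at scale $r$ and bounds $m$ by the doubling constant $|B_S(2r)|/\min_i|x_i B_S(r/2)|$ raised to $O(\log(R_0/r)) = O(1)$ iterations — I would present whichever of these gives the claimed form most transparently.
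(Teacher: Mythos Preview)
Your proposal has a genuine gap: you never actually obtain the bound $|S'| \ll O(1)^{d/\kappa}$ that is the whole point of the lemma. Every route you sketch yields only $|S'| \le R_0^{O(d)}$, which depends on $R_0$. In your final paragraph you write $R_0^d = (R_0^\kappa)^{d/\kappa} \le (\text{const})^{d/\kappa}$ ``once one notes $R_0^\kappa$ can be taken to be a bounded power of a constant'' --- but the hypothesis $R_0 \ge 100^{1/\kappa}$ is a \emph{lower} bound on $R_0^\kappa$, not an upper bound; $R_0^\kappa$ can be arbitrarily large. Your last attempt, bounding $m$ via a doubling constant ``raised to $O(\log(R_0/r)) = O(1)$ iterations'', is also off: $\log(R_0/r) = (1-\kappa)\log R_0$ is unbounded.

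The missing idea is a pigeonhole step to locate a scale with \emph{bounded doubling}. Since $1 \le |B_S(1)| \le |B_S(R_0^\kappa)| \le R_0^d$, among the $\gg \kappa \log R_0$ scales $r, 10r, 100r, \ldots$ between $1$ and $R_0^\kappa/10$ there must exist one with $|B_S(10r)| \ll O(1)^{d/\kappa}|B_S(r)|$. The paper then takes a maximal $2r$-separated set $X \subset B_S(4r)$; disjointness of the $x \cdot B_S(r)$ inside $B_S(5r)$ gives $|X| \le |B_S(5r)|/|B_S(r)| \ll O(1)^{d/\kappa}$, and the covering $B_S(4r) \subset X \cdot B_S(2r)$ iterates to $B_S(nr) \subset B_{S'}(n) \cdot B_S(2r)$ with $S' := X \cup X^{-1}$. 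This is what makes the generator count independent of $R_0$; your Schreier-transversal packing at the single scale $R_0^\kappa$ cannot do this because you have no control on $|B_S(R_0^\kappa)|$ itself.
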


\begin{remark} In practice, we will take $\kappa$ to be small compared to $d$ (e.g. $\kappa = 1/100d$), and so the slight degradation $d \mapsto d/(1-\kappa)$ in the order of growth here will be acceptable (at a later stage of the argument, we will obtain a reduction in the growth order by $1$, which will more than compensate for these sorts of losses).
\end{remark}

\begin{proof}  Clearly
$$ 1 \leq |B_S(1)| \leq |B_S(R_0^\kappa)| \leq |B_S(R_0)| \leq R_0^d.$$
By the pigeonhole principle, one can thus find a radius $1 \leq r \leq R_0^\kappa / 10$ such that
$$ |B_S(10r)| \ll O(1)^{d/\kappa} |B_S(r)|.$$
Fix this $r$.  Now, let $X$ be a maximal subset of $B_S(4r)$ such that the sets $x \cdot B_S(r)$ for $x \in X$ are disjoint.  Since the $x \cdot B_S(r)$ are contained in $B_S(5r)$, we have
$$ |X| \leq |B_S(5r)|/|B_S(r)| \ll O(1)^{d/\kappa}.$$
On the other hand, by construction of $X$ we have the covering property
$$ B_S(4r) \subset X \cdot B_S(2r).$$
In particular, if we set $S' := X \cup X^{-1}$, and let $G'$ be the group generated by $S'$, then $|S'| \ll O(1)^{d/\kappa}$ and
\begin{equation}\label{bs4r}
B_S(4r) \subset S' \cdot B_S(2r);
\end{equation}
$$ B_S(nr) \subset B_{S'}(n) \cdot B_S(2r)$$
for $n=1,2,\ldots$.  In particular, 
we have
$$ B_S(R_0^\kappa+1) \subset B_{S'}(R_0^\kappa) \cdot B_S(R_0^\kappa)$$
which on inversion gives
$$ B_S(R_0^\kappa+1) \subset B_S(R_0^\kappa) \cdot B_{S'}(R_0^\kappa).$$
On the other hand, we have $S' \subset B_S(4r) \subset B_S(R_0^\kappa)$, and the claim follows.
\end{proof}

\begin{remark}
Because of this lemma, we will be able to safely absorb a number of terms involving the size $|S|$ of the generating set in the arguments that follow.  (One should think of $d$ and $1/\kappa$ as being bounded; the key point is that the bound on $|S|$ is independent of $R_0$.)   
\end{remark}

\begin{remark}  If one does not pass to a finite index subgroup, then one may need as many as $\log R_0$ generators.  Indeed, consider the abelian group $G = \Z_2^n \times \Z$, where $n \sim \log R_0$.  Then $B_S(R_0)$ is of polynomial size in $R_0$, but one needs $n+1 \sim \log R_0$ generators in order to generate the whole group $G$.  In the converse direction, if $B_S(R_0) \leq R_0^d < 2^{R_0}$, then if we let $s_1,\ldots,s_n$ be a maximal sequence in $S$ which is \emph{dissociated} (i.e. the words $s_1^{i_1} \ldots s_n^{i_n}$ for $i_1,\ldots,i_n \in \{0,1\}$ are different), then one easily verifies that $2^n \leq |B_S(R_0)| \leq R_0^d < 2^{R_0}$ and thus $n \leq d \log_2 R_0 < R_0$, and that the set $\{s_1,\ldots,s_n,s_1^{-1},\ldots,s_n^{-1}\}$ generates $S$ and thus $G$.  Thus $G$ can be generated by $O( d \log R_0)$ generators in this case.
\end{remark}

\begin{remark} In many applications of Theorem \ref{main-thm}, $S$ would already be bounded.  But in our proof of Theorem \ref{main-thm} (which is based on an induction on $d$), it will become necessary at some stage in the proof to pass from $G$ to a subgroup such as $[G,G]$, which need not have a bounded number of generators.  It is then that Lemma \ref{gen-red} becomes necessary. 
\end{remark}

We shall need two further results in a similar spirit.  The first 
asserts that if a $(R_0,d)$-growth group $G$ can be generated by a small number of generators and has polynomial growth, then so does $[G,G]$ 
(cf.~\cite[page 61]{gromov}, or~\cite{milnor}):

\begin{lemma}[Generator reduction for a commutator group]\label{zonk} Let $d \geq 1$, $0 < \kappa < 1$, and $R_0 \geq C d^{C/\kappa}$ for some sufficiently large absolute constant $C$, and let $(G,S)$ be a $(R_0,d)$-growth group.  Then there exists a set of generators $S'$ of $[G,G]$, each of the form $g[e,e']g^{-1}$ for some $e,e' \in S$ and $g \in B_S(\frac{1}{2} R_0^\kappa - 4)$.  In particular, $([G,G],S')$ is a $(\infty,R_0^\kappa)$-subgroup of $(G,S)$, and thus (by Remark \ref{growth}) $([G,G],S')$ is a $(R_0^{1-\kappa}, d/(1-\kappa))$-growth group.
\end{lemma}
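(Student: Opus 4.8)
The plan is to mimic the classical argument (as in Milnor's work) that the commutator subgroup of a polynomial growth group is finitely generated, but to track the word lengths carefully so as to produce generators lying inside a ball of controlled radius. First I would invoke Lemma~\ref{gen-red}: since $R_0 \geq C d^{C/\kappa}$ is large, we may pass to a finite index subgroup with a bounded number of generators. Actually, more cleanly, I would first apply Lemma~\ref{gen-red} with a somewhat smaller exponent $\kappa'$ (say $\kappa/2$) to replace $(G,S)$ by a $(R_0^{\kappa'},R_0^{\kappa'})$-subgroup $(G_0,S_0)$ with $|S_0| \ll O(1)^{d/\kappa'}$; this costs us only a mild degradation in the growth exponent, and since the conclusion we want is only an $(\infty,R_0^\kappa)$-statement we have room to spare. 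The point of reducing to a bounded generating set is that $[G,G]$ is then generated, as a \emph{normal} subgroup, by the finitely many commutators $[e,e']$ with $e,e'\in S_0$, and what we need is to turn ``normally generated by these commutators'' into ``generated (as a group) by boundedly many conjugates of these commutators, each conjugated by a short word''.

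The key step is the standard trick: the set $N$ of elements of $[G,G]$ that can be written as products of $G$-conjugates $g[e,e']^{\pm1}g^{-1}$ with $g$ ranging over the ball $B_S(\tfrac12 R_0^\kappa - O(1))$ — call this set $H$ — is shown to already be \emph{all} of $[G,G]$ by a growth argument. Concretely, one first checks the combinatorial identity $(gw)[e,e'](gw)^{-1} = g\big(w[e,e']w^{-1}\big)g^{-1}$ and uses it, together with the identity $x[e,e']x^{-1} \cdot y [e,e']y^{-1}$-type manipulations, to show that if $H$ fails to be all of $[G,G]$ then one can find, for each radius $r$, at least $r$ (or even exponentially many) cosets of $H$ inside $B_S(r)$ — i.e. the number of conjugates needed keeps growing with the radius. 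Here is the cleaner version of the growth argument: let $X \subset B_S(\tfrac12 R_0^\kappa)$ be a maximal set of elements such that the conjugates $\{x [e,e'] x^{-1} : x \in X\}$ (over all $e,e'\in S_0$) generate a subgroup that grows; by maximality every $g \in B_S(\tfrac12 R_0^\kappa)$ satisfies $g[e,e']g^{-1} \in \langle X\text{-conjugates}\rangle$, and one shows the resulting subgroup is normalized appropriately so that it is all of $[G,G]$. If $|X|$ were forced to be large (super-polynomially, or even just $\gg R_0^\kappa$) this would contradict $|B_S(R_0)| \leq R_0^d$, forcing $|X| \ll O(1)^{d/\kappa'}$ and giving the bounded generating set $S'$ with the stated radius bound.

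The main obstacle I anticipate is the bookkeeping in the growth/pigeonhole step: one must argue that a \emph{proper} subgroup chain among the ``partial conjugate-generated subgroups'' forces genuinely new group elements to appear at each stage, and then convert that into a lower bound on $|B_S(r)|$ that beats $r^d$ for $r \le R_0$. The subtlety is that the conjugating elements $g$ live in $G$, not in $[G,G]$, so the ``new elements'' produced are themselves short words in $S$, and one needs the telescoping $B_S(r+1) \subset B_S(r)\cdot(\text{conjugates})$-style inclusion to iterate; this is exactly analogous to the structure of Lemma~\ref{gen-red}'s proof and of Remark~\ref{joe}, so I would model the estimate on those. A secondary, purely technical point is keeping the radius of the conjugating elements at $\tfrac12 R_0^\kappa - 4$ rather than $R_0^\kappa$: since each generator of $[G,G]$ has the form $g[e,e']g^{-1}$ with $\|g\|_S \le \tfrac12 R_0^\kappa - 4$ and $e,e'\in S$, its $S$-length is at most $2(\tfrac12 R_0^\kappa - 4) + 4 = R_0^\kappa - 4 < R_0^\kappa$, which is what yields the $(\infty,R_0^\kappa)$-subgroup conclusion; the factor $\tfrac12$ must therefore be inserted \emph{before} running the pigeonhole, which slightly tightens the constant $C$ in the hypothesis $R_0 \geq C d^{C/\kappa}$ but causes no real difficulty. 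The final sentence, that $([G,G],S')$ is a $(R_0^{1-\kappa}, d/(1-\kappa))$-growth group, then follows immediately from Remark~\ref{growth} exactly as in Lemma~\ref{gen-red}.
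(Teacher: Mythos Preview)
Your overall strategy --- a growth/pigeonhole argument to show that conjugates of basic commutators by short words already generate $[G,G]$ --- is correct in spirit, but there are two concrete problems.

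First, the preliminary invocation of Lemma~\ref{gen-red} is both unnecessary and, as you have described it, wrong: passing to a finite-index subgroup $(G_0,S_0)$ replaces $[G,G]$ by $[G_0,G_0]$, which in general is strictly smaller, so you would be proving a different statement. The paper's proof never bounds $|S|$; the growth hypothesis $|B_S(R_0)|\le R_0^d$ is used directly.

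Second, and more substantively, your ``proper subgroup chain'' heuristic does not give quantitative control. If $H_r$ is the \emph{subgroup} generated by the conjugates $g[e,e']g^{-1}$ with $\|g\|_S\le r$, then knowing $H_r\subsetneq H_{r+1}$ tells you only that some new element appears, but that element could have enormous $S$-length, so it contributes nothing to a lower bound on $|B_S(R)|$. The paper sidesteps this by working not with the subgroups $H_r$ but with the finite \emph{product sets}
\[
A_{\le r}:=A_r\cdot A_{r-1}\cdots A_0,\qquad A_r:=\{g[e,e']g^{-1}:e,e'\in S,\ g\in B_S(r)\}.
\]
The point is that $A_{\le r}\subset B_S(O(r^2))$, so $|A_{\le r}|\le R_0^d$ for $r\ll R_0^{1/2}$. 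Since $R_0^{\kappa/2}\gg d\log R_0$ by hypothesis, one can find $r_0\le \tfrac12 R_0^\kappa-4$ with $|A_{\le r_0}|<2^{r_0-2}$, and hence (pigeonhole) some $r\le r_0$ with $|A_{\le r+1}|<2|A_{\le r}|$. For such $r$, every $x\in A_{r+1}$ has $x\cdot A_{\le r}\cap A_{\le r}\neq\emptyset$, so $A_{r+1}\subset A_{\le r}\cdot A_{\le r}^{-1}\subset\langle A_r\rangle$. This single inclusion is what makes the argument work: it says conjugation by any $s\in S$ sends $A_r$ into $\langle A_r\rangle$, hence $\langle A_r\rangle$ is normal in $G$; since it contains the basic commutators $[e,e']$, it equals $[G,G]$. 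Your ``maximal $X$'' and ``normalized appropriately'' phrases gesture at this step but do not supply it; the passage from product sets (with controlled word length) to normality is the idea you are missing.
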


\begin{proof}
It is not difficult to see that $[G,G]$ is generated by the elements $g [e,e'] g^{-1} = [geg^{-1}, ge'g^{-1}]$ where $e,e' \in S$ and $g \in G$ (since modulo the normal group generated by these elements, all the basis elements $e, e'$ commute).  The difficulty is to replace this infinite generating set by a finite one.

For any $r \geq 1$, let $A_r := \{ g[e,e'] g^{-1}: e,e' \in S, g \in B_S(r)\} \subset B_S(2r+4)$, and let $A_{\leq r} := A_r \cdot A_{r-1} \cdot \ldots \cdot A_0$.  Then $A_{\leq r} \subset B_S(O(r+1)^2)$, and thus by hypothesis we have $|A_{\leq r}| \leq R_0^d$ for $r \leq c R_0^{1/2}$ and some small absolute constant $c>0$.  By hypothesis (and if $C$ is large enough), $R_0^{\kappa/2}$ is larger than a large multiple of $d \log R_0$.  In particular, one can find a $2 \leq r_0 \leq \frac{1}{2} R_0^\kappa - 4$ such that
$$ |A_{\leq r_0}| < 2^{r_0-2}.$$
From the pigeonhole principle, we can thus find $2 \leq r \leq r_0$ such that 
\begin{equation}\label{ar}
|A_{\leq r+1}| < 2 |A_{\leq r}|.
\end{equation}
This implies that for any $x \in A_{r+1}$, that $x \cdot A_{\leq r}$ and $A_{\leq r}$ overlap, thus 
\begin{equation}\label{aer}
A_{\leq r+1} \subset A_{\leq r} \cdot A_{\leq r}^{-1}.
\end{equation}
In particular, $A_{r+1}$ is contained in the group generated by $A_r$, which implies that the act of conjugation by any element $g \in S$ preserves the group generated by $A_r$.  Since $A_r$ also contains the commutators $[e,e']$ for $e,e' \in S$, we conclude that $S' := A_r$ generates $[G,G]$; also, $S'$ is symmetric by construction.   The claims of the lemma then follow.
\end{proof}

Next, we show that finite index subgroups of finitely generated groups continue to be finitely generated in a very quantitative manner.

\begin{lemma}[Qualitative finite index implies quantitative finite index]\label{index}  Let $d \geq 1$, $I \geq 1$.  Let $G = (G,S)$ be a finitely generated group, and let $G'$ be a finite index subgroup of $G$ with index $|G:G'| \leq I$.  Then there exists a set $S' \subset B_S(2I+1)$ of generators of $G'$ such that $(G',S')$ is a $(2I+1,2I+1)$-subgroup of $G$.
\end{lemma}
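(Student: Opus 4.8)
The plan is to run the Reidemeister--Schreier construction, but with a transversal chosen by a breadth-first search in the coset graph so that all transversal elements are short, and then to extract the covering condition from the Schreier telescoping identity, being careful to telescope ``one step short'' so as to hit the sharp constant $2I+1$.

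\emph{Step 1 (a short transversal).} Let $m := |G:G'| \le I$. Consider the coset graph on the right cosets $G'\backslash G$, with an edge $G'g \to G'gs$ for each $s\in S$; since $S$ generates $G$ this graph is connected, and it has $m$ vertices. Run a BFS from the root $G'$ to obtain a spanning tree, and for each coset $c$ let $t_c\in G$ be the product of the edge labels along the tree path from $G'$ to $c$, so that $t_{G'}=e$ and $\|t_c\|_S \le m-1 \le I-1$ for every $c$. Set $T:=\{t_c\}$ and, for $g\in G$, write $\bar g := t_{G'g}\in T$, so that $g\bar g^{-1}\in G'$.

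\emph{Step 2 (the generating set).} Put $S' := \{\, t s\,\overline{ts}^{-1} : t\in T,\ s\in S \,\}\setminus\{e\}$. Each such element lies in $G'$ (since $G'ts = G'\overline{ts}$), and $\|t s\,\overline{ts}^{-1}\|_S \le (m-1)+1+(m-1) = 2m-1$, so $S'\subseteq B_S(2I-1)\subseteq B_S(2I+1)$; also $|S'| \le m|S| < \infty$. The set $S'$ is symmetric: the inverse of $ts\,\overline{ts}^{-1}$ is $\overline{ts}\,s^{-1} t^{-1}$, which equals $t' s' \,\overline{t's'}^{-1}$ for $t' := \overline{ts}\in T$ and $s':=s^{-1}\in S$, because $G'\overline{ts}\,s^{-1} = G't$ forces $\overline{t's'}=t$. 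Finally $S'$ generates $G'$ by the usual Reidemeister--Schreier telescoping: for $h=s_1\cdots s_\ell\in G'$, setting $t^{(0)}:=e$ and $t^{(i)}:=\overline{s_1\cdots s_i}$ one has $h=\prod_{i=1}^{\ell} t^{(i-1)} s_i (t^{(i)})^{-1}$ since $t^{(\ell)}=\bar h=e$, and each factor lies in $S'\cup\{e\}$.

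\emph{Step 3 (the covering condition with $K=R=2I+1$).} For $g\in B_S(2I+1)$ this is trivial ($g = g\cdot e$), so fix $g\in B_S(2I+2)$ with $\|g\|_S=n$, $2\le n\le 2I+2$, and a word $g=s_1\cdots s_n$; define $t^{(i)}$ and $\gamma_i := t^{(i-1)} s_i (t^{(i)})^{-1}\in S'\cup\{e\}$ as above. Telescoping only the first $n-1$ factors gives $\gamma_1\cdots\gamma_{n-1} = s_1\cdots s_{n-1}\,(t^{(n-1)})^{-1}$, whence
$$ g \;=\; (\gamma_1\cdots\gamma_{n-1})\cdot\bigl(t^{(n-1)} s_n\bigr).$$
Here $h:=\gamma_1\cdots\gamma_{n-1}\in G'$ has $\|h\|_{S'}\le n-1\le 2I+1$, while $a:=t^{(n-1)} s_n$ has $\|a\|_S\le (m-1)+1=m\le I\le 2I+1$. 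Hence $B_S(2I+2)\subseteq B_{S'}(2I+1)\cdot B_S(2I+1)$, and since $B_S(2I+2)$ is a symmetric set this is equivalent to $B_S(2I+2)\subseteq B_S(2I+1)\cdot B_{S'}(2I+1)$. Combined with Steps 1--2 this is exactly the assertion that $(G',S')$ is a $(2I+1,2I+1)$-subgroup of $(G,S)$.

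\emph{Expected main obstacle.} The only subtle point is matching the constant in the covering inequality. The naive route --- write $g=\bar g\cdot(\bar g^{-1}g)$ and then Reidemeister--Schreier-rewrite the word $\bar g^{-1}g\in G'$, which has $S$-length up to $n+m$ --- only produces an $S'$-word of length $\approx n+m \approx 3I$, giving a $(3I+O(1),\cdot)$-subgroup. Telescoping one step short, so that the final transversal element $t^{(n-1)}$ is \emph{absorbed} into the last generator $s_n$ rather than rewritten, keeps the $S'$-part at $n-1$ letters while spending only $m\le I$ letters of $S$; that, together with the freedom to pass to inverses (using that $B_S(2I+2)$ is symmetric so we may swap the two factors), is what yields the sharp $2I+1$ on both coordinates. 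Everything else --- connectedness of the coset graph, finiteness/symmetry of $S'$, the generation statement, and the bound $\|t_c\|_S\le m-1$ from BFS --- is routine.
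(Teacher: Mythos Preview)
Your proof is correct and takes a genuinely different route from the paper's. The paper argues by pigeonhole: since the number of cosets in $B_S(r)\cdot G'$ is nondecreasing in $r$ and bounded by $I$, there is some $r_0\le I$ with $B_S(r_0+1)\cdot G' = B_S(r_0)\cdot G'$, hence $B_S(r_0+1)\subset B_S(r_0)\cdot G'$; one then simply sets $S' := G'\cap B_S(2r_0+1)$, so that by the triangle inequality $B_S(r_0+1)\subset B_S(r_0)\cdot S'$, and both the covering condition and the fact that $S'$ generates $G'$ drop out by iterating this single-step inclusion. Your Reidemeister--Schreier construction with a BFS transversal is more explicit and more constructive: it produces a generating set of size at most $|G:G'|\cdot|S|$ (the paper's $S'$ could in principle be as large as $|G'\cap B_S(2I+1)|$), and makes the generation of $G'$ immediate. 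The paper's stabilization argument is shorter, avoids having to verify symmetry of $S'$ by hand, and does not need the final inversion trick to put the factors in the required order $B_S\cdot B_{S'}$. Both approaches land on the same constant $2I+1$.
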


\begin{proof}  For each $r > 0$, the set $B_S(r) \cdot G'$ is a union of left cosets of $G'$.  The number of such cosets is of course $|G:G'|$.  Thus by the pigeonhole principle, one can find $0 \leq r_0 \leq I$ such that $B_S(r_0+1) \cdot G' = B_S(r_0) \cdot G'$.  In particular,
$$ B_S(r_0+1) \subset B_S(r_0) \cdot G'.$$
If we set $S' := G' \cap B_S(2r_0+1)$, then $S'$ is symmetric and contained in $B_S(2I+1)$, and by the triangle inequality we have
$$ B_S(r_0+1) \subset B_S(r_0) \cdot S'.$$
Multiplying both sides on the left by $B_S(r-r_0)$, we obtain 
$$
B_S(r+1) \subset B_S(r) \cdot S'
$$
for all $r \geq r_0$ (and hence for all $r \geq 2I+1$).  In particular
$$ 
B_S(2I+2) \subset B_S(2I+1) \cdot B_{S'}(2I+1)$$
and the claim follows.
\end{proof}

\section{Reduction to two key propositions}\label{reduct-sec}

Theorem \ref{main-thm2} (modified as discussed at the end of Section \ref{compact-sec}) is deduced from one easy proposition and two 
difficult ones.  We begin with the easy proposition, which handles the base case $d<1$:

\begin{lemma}[Sublinear growth case]\label{base}  Suppose that $(G,S)$ is a $(R,d)$-growth group for some $0 < d < 1$ and $R > 1$.  Then $G = B_S(R)$ and so $G$ is finite with $|G| \leq R^d$.
\end{lemma}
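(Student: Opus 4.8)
The plan is to exploit the fact that the balls $B_S(n)$, as $n$ ranges over the nonnegative integers, form a nondecreasing sequence of finite sets which can stabilize only after it has already exhausted all of $G$. Since the word length $\|g\|_S$ is always an integer, one has $B_S(R) = B_S(\lfloor R\rfloor)$, so it suffices to analyze the integer radii $0,1,\dots,\lfloor R\rfloor$ (note $\lfloor R\rfloor \geq 1$ as $R>1$).

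The key dichotomy I would set up is the following. Suppose first that $B_S(n) = B_S(n+1)$ for some integer $0 \leq n \leq \lfloor R\rfloor - 1$. Then $s \cdot B_S(n) \subseteq B_S(n+1) = B_S(n)$ for every $s \in S$, and iterating this shows $B_S(m) = B_S(n)$ for all $m \geq n$; since $S$ generates $G$ we conclude $G = \bigcup_m B_S(m) = B_S(n) \subseteq B_S(\lfloor R\rfloor) = B_S(R)$. Hence $G = B_S(R)$, and then $|G| = |B_S(R)| \leq R^d$ by the defining inequality \eqref{bsrd}, which is exactly the conclusion.

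Otherwise, no such stabilization occurs, so $B_S(n) \subsetneq B_S(n+1)$ strictly for every $0 \leq n \leq \lfloor R\rfloor - 1$, giving $|B_S(n+1)| \geq |B_S(n)| + 1$. Summing these inequalities starting from $|B_S(0)| = 1$ yields $|B_S(R)| = |B_S(\lfloor R\rfloor)| \geq \lfloor R\rfloor + 1 > R$. But $0 < d < 1$ and $R > 1$ force $R^d < R$, so $|B_S(R)| > R > R^d$, contradicting the hypothesis that $(G,S)$ is a $(R,d)$-growth group. Therefore only the first case is possible, and the lemma follows.

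I do not expect any genuine obstacle here; the argument is a short monotonicity-plus-pigeonhole computation. The only points demanding a little care are the reduction from the real radius $R$ to the integer $\lfloor R\rfloor$, and the observation that a single coincidence $B_S(n)=B_S(n+1)$ already pins down all of $G$.
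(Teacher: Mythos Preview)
Your proof is correct and follows essentially the same pigeonhole/monotonicity argument as the paper: both exploit that the integer-valued, nondecreasing sequence $|B_S(n)|$ cannot strictly increase over more than $R^d < R$ steps without exceeding the growth bound, forcing a stabilization $B_S(n)=B_S(n+1)$ which then yields $G=B_S(R)$. Your version is slightly more explicit about reducing to integer radii, but the content is the same.
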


\begin{proof}  The $|B_S(r)|$ for $0 \leq r < R$ are integers between $1$ and $|B_S(R)| = R^d < R$ that increase in $r$.  Thus by the pigeonhole principle, there exists $0 \leq r < R-1$ such that $|B_S(r)|=|B_S(r+1)|$, thus $B_S(r) = B_S(r+1)$.  Iterating this we see that $B_S(r) = B_S(r')$ for all $r' \geq r$, thus $G = B_S(r) = B_S(R)$ and the claim follows.
\end{proof}

The higher order growth cases $d \geq 1$ will then be handled by an induction on $d$.  The first major step in this process is to obtain a reduction in the order of growth for a certain commutator subgroup:

\begin{proposition}[Reduction in growth order]\label{trivcor2}  Let $R_0, d \geq 1$, and let $(G,S)$ be a $(R_0,d)$-growth group.  Assume that
\begin{equation}\label{rdc}
R_0 \geq \exp(\exp(Cd^C))
\end{equation}
for some sufficiently large absolute constant $C$.  Then at least one of the following holds:
\begin{itemize}
\item[(i)] $G = B_S(R_0^{\exp(\exp(O(d)^{O(1)}))})$ (in particular, $G$ is finite).
\item[(ii)] There exists a $(R_0^{1/10},R_0^{1/10})$-subgroup $(G',S')$ of $G$ and a positive integer $l = O(d^{O(1)})$ such that $(G')^{(l)}$ is generated by a set $(S')^{(l)} \subset B_{S'}(R_0^{1/10})$ for which $( (G')^{(l)}, (S')^{(l)})$ is a $(R, d-0.9)$-growth group for some $R_0^{1 - 1/10d} \leq R \leq R_0$.
\end{itemize}
\end{proposition}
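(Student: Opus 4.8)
\emph{Plan.} This proposition is exactly the quantitative form of steps (i)--(vi) of the overview, and the proof (carried out over Sections~\ref{kleinersec-1}--\ref{kleinersec-4}, as indicated below the statement) runs those steps in order, using Lemmas~\ref{gen-red}, \ref{zonk}, \ref{trans} and~\ref{index} throughout to keep track of generating sets and scales. I fix a small parameter $\kappa = 1/O(d^{O(1)})$, pinned down at the very end, and note that hypothesis~\eqref{rdc} is precisely what makes $R_0^{\kappa}$ still exceed every threshold occurring below. Lemmas~\ref{gen-red} and~\ref{zonk} alone would only force $R_0$ to be singly exponential in $d$; the \emph{doubly} exponential requirement, and the ineffectivity of $K(R_0,d)$, enter only through step (ii), where the scale at which a finite-dimensional space of Lipschitz harmonic functions embeds into a metric ball is itself of exponential type and resists effectivization.

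\emph{Disposing of the finite case.} Whenever, at any stage, a ball of the group currently under consideration stabilises at a moderate radius -- equivalently, that group fails to be infinite up to the scale demanded by the harmonic-function machinery -- the argument of Lemma~\ref{base}, together with the covering relations $G = G' \cdot B_S(R_0^{\kappa})$ furnished by Lemma~\ref{gen-red} and transitivity (Lemma~\ref{trans}), places all of $G$ in a single ball $B_S(R_0^{\exp(\exp(O(d)^{O(1)}))})$, which is conclusion~(i). So I may assume every group produced below is genuinely infinite, hence (since every infinite finitely generated group has at least linear growth, together with Remark~\ref{growth}) of polynomial growth order $\le d/(1-\kappa)^{O(1)}$.

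\emph{From growth to a compact linear representation.} Using Lemma~\ref{gen-red} I first pass to a finite-index $(R_0^{\kappa},R_0^{\kappa})$-subgroup $G_1$ with a bounded generating set. On the infinite group $G_1$ there is a non-constant Lipschitz harmonic function (step (i); spectral theory of the Laplacian plus Young's inequality), the space $V$ of Lipschitz harmonic functions on $G_1$ is non-trivial modulo constants and has dimension $m \le C^d$ (step (ii); the Kleiner--Colding--Minicozzi Poincar\'e / reverse-Poincar\'e / doubling bootstrap), and $G_1$ acts on $V$ preserving the Lipschitz seminorm (step (iii)). Passing to $V$ modulo constants and choosing a basis quantitatively yields a homomorphism $\rho\colon G_1 \to \mathrm{GL}_m(\R)$ with image in a compact linear group; the image is infinite, for otherwise a finite-index subgroup of $G_1$ would fix every Lipschitz harmonic function, forcing each of them to be bounded and hence -- on a group of polynomial growth -- constant, contradicting that $V$ is non-trivial modulo constants. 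By compactness (step (iv)) there is a finite-index $G_2 \le G_1$ with $\rho(G_2)$ inside a prescribed small neighbourhood of the identity; on such a neighbourhood iterated commutators of matrices contract geometrically, which is incompatible with polynomial growth unless $\rho(G_2)$ is solvable of derived length $l_0 = O(d^{O(1)})$ -- this is the Solovay--Kitaev-type argument. Finally, Lemma~\ref{index} turns the qualitative finite index of $G_2$ in $G$ into a quantitative one, and Lemma~\ref{trans} folds everything into a single $(R_0^{1/10},R_0^{1/10})$-subgroup $G'$ of $G$, with a bounded generating set $S'$, of growth order $\le d/(1-\kappa)^{O(1)}$, on which $\rho$ has infinite solvable image of derived length $\le l_0$ (here the astronomically large size of $R_0$ absorbs the $R_0$-independent index bounds produced in step (iv)).

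\emph{The growth reduction, and the main obstacle.} Set $l := l_0 + 1 = O(d^{O(1)})$. Then $(G')^{(l)} \subseteq \ker(\rho|_{G'}) =: H$, and since $G'/H \cong \rho(G')$ is infinite it has at least linear growth, so a standard fibering/pigeonhole argument gives $|H \cap B_{S'}(r)| \ll r^{d/(1-\kappa)^{O(1)}-1}$; thus $H$, and a fortiori its subgroup $(G')^{(l)}$, has growth order $\le d-0.9$ once $\kappa$ is small. To exhibit a bounded generating set for $(G')^{(l)}$ inside $B_{S'}(R_0^{1/10})$ I apply Lemma~\ref{zonk} $l$ times in succession to $G'$: this is legitimate since $l$ stays bounded, each application keeps the new generators within $S'$-distance $R_0^{\kappa}$ of the previous ones (so within $S'$-distance $R_0^{\kappa l} \le R_0^{1/10}$ overall) and only mildly degrades the growth order, and it shows $((G')^{(l)},(S')^{(l)})$ is an $(R,d-0.9)$-growth group with $R = R_0^{(1-\kappa)^l}$; choosing $\kappa \le 1/(100\,d\,l)$ makes $R_0^{1-1/10d} \le R \le R_0$, giving conclusion~(ii). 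The main obstacle throughout is step (ii): making Kleiner's finite-dimensionality theorem for Lipschitz harmonic functions quantitative, with $m \le C^d$, while the scale at which these functions embed into a fixed ball and a positive lower bound for the determinant of the associated positive-definite quadratic form both resist effectivization -- which is why only an ineffective $K(R_0,d)$ is obtained and why $R_0$ must be doubly exponential in $d$. A secondary difficulty is step (iv), where the compactness of the ambient linear group must be exploited to produce the finite-index subgroup on which $\rho$ is near the identity, before the commutator-contraction estimate can be weighed against polynomial growth.
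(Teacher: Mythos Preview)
Your outline captures the overall architecture of the paper's argument, but there is a genuine gap in how you pass from ``ball does not stabilise at the harmonic-function scale'' to ``the group is genuinely infinite'' and then work with \emph{exact} Lipschitz harmonic functions and an \emph{exact} homomorphism $\rho$ with an honest kernel $H$. The proposition must apply to arbitrary $(R_0,d)$-growth groups, including finite groups of size far larger than $R_0^{\exp(\exp(O(d)^{O(1)}))}$ (for which conclusion~(i) is not automatic), and on a large finite group there are no non-constant harmonic functions at all. The paper therefore works throughout with $\eps$-harmonic Lipschitz functions (Proposition~\ref{infilip}) and an \emph{approximate} representation $g\mapsto U_g$ (Section~\ref{kleinersec-3}); there is never an exact $\rho$, hence no kernel, and your fibering step ``$G'/H$ infinite $\Rightarrow$ linear growth $\Rightarrow |H\cap B_{S'}(r)|\ll r^{d-1}$'' has no meaning in that setting.

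The paper replaces that step by a genuinely new ingredient you do not mention: Proposition~\ref{lorrange}, which shows (by a second application of the quantitative Kleiner theorem, via a diagonal-dominance argument on random translates) that a non-degenerate $\eps$-harmonic Lipschitz function must fluctuate by $\gg R_0^{1-O(\kappa)}$ on $B_S(R_0^{1-O(\kappa)})$. Combined with Theorem~\ref{trivdir} (your steps (ii)--(iv), which give $u(ex)=u(x)+O(R_0^{-50d})$ for $e\in(S')^{(l+1)}$), this large range produces $\gg R_0^{1-6\kappa}$ points $x_m$ whose translates $B_{(S')^{(l+1)}}(R_0^{1-4\kappa})\cdot x_m$ are pairwise disjoint inside $B_S(R_0)$, and \emph{that} is the fibering that yields the drop in growth order (Corollary~\ref{trivcor}). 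Without Proposition~\ref{lorrange} the argument does not close at a single scale. Two smaller points: first, the proposition is completely effective (the paper says so explicitly just after its statement), so your attribution of the ineffectivity of $K(R_0,d)$ to this step is misplaced --- that ineffectivity enters only in the Milnor--Wolf part (Proposition~\ref{qmw}); second, the commutator argument in step~(iv) does not show that $\rho(G_2)$ is solvable, only that $U_e$ is negligibly close to $I$ for $e$ in a high derived subgroup, which is what Theorem~\ref{trivdir} actually asserts.
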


This proposition will be established in Sections \ref{kleinersec-1}-\ref{kleinersec-5} using (quantitative versions of) some arguments of Kleiner \cite{kleiner}.  The bounds here are completely effective (and far superior to those in Proposition \ref{qmw} below).

To exploit this growth reduction we need the following variant of Theorem \ref{main-thm}.

\begin{proposition}[Semi-quantitative Milnor-Wolf theorem]\label{qmw}  Let $s, l, r, R_0, d \geq 1$, and suppose that
$$ R_0 \geq r^C (Cd)^{Cdl}$$
for some sufficiently large absolute constant $C$.  Suppose we have a short exact sequence
$$ 0 \to H \to G \to L \to 0$$ 
of groups, where $G = (G,S)$ is a $(R_0,d)$-growth group, $H = (H,S_H)$ is a virtually $r$-nilpotent (see Def.~\ref{virtnil-def3}) $(\infty,R_0^{1/10})$-subgroup of $G$, and $L$ is solvable of derived length at most $l$.  Then $(G,S)$ is virtually $r + l O(1)^d$-nilpotent.
\end{proposition}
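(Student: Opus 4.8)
The plan is to run the classical Milnor--Wolf argument in a finitary form, arranged so that the polynomial growth hypothesis enters only through two points: a quantitative version of Milnor's finite--generation lemma, and a quantitative version of Wolf's dichotomy for the $\Z$--action on a nilpotent group. Everything else --- passage to characteristic and finite--index subgroups, killing torsion, and the implication ``the $\Z$--action is virtually unipotent $\Rightarrow$ the group is virtually nilpotent'' --- is pure group theory that may cost an uncontrolled finite index (this is what ``semi-quantitative'' buys us). The shape of the hypothesis $R_0\ge r^C(Cd)^{Cdl}$ will come out of tracking how the usable polynomial--growth scale degrades through the $O(dl)$ steps of the induction: Milnor's step will be seen to cost only a factor $O(d\log d)$ in the scale per $\Z$--quotient, while the Wolf step needs a residual scale that is merely polynomial in $d$ and $r$.

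\textbf{Reductions and the tower.} Applying Lemma~\ref{gen-red} once, we may assume $|S|\le O(1)^d$ at the cost of one power of $R_0$ and of replacing $G$ by a finite--index subgroup (its intersection with $H$ still plays the role of $H$, with a good generating set furnished by Lemma~\ref{index}). Since $(H,S_H)$ is finitely generated and virtually $r$--nilpotent, intersecting the finitely many subgroups of $H$ of the relevant index produces a \emph{characteristic} $r$--nilpotent finite--index subgroup $N\le H$; being characteristic in $H\triangleleft G$ it is normal in $G$. We never pass ``into'' $N$ --- we keep $G$ and use $N$ only as a module on which $G$ acts. Now form the derived series of $L=G/H$ and its preimages $H=G_{l+1}\triangleleft G_l\triangleleft\cdots\triangleleft G_1=G$, with each $G_i/G_{i+1}$ abelian. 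The quantitative Milnor lemma states: if a group of polynomial growth of order $\le d$ at scale $\rho$ has $q$ generators and surjects onto $\Z$ with kernel $K$, then $K$ is generated by the $t$--conjugates $t^jh_it^{-j}$ with $|j|\le N=O(d\log d)$ of the given generators (once $|j|$ exceeds this bound, a dissociation/pigeonhole argument forces $|B(O(N))|\ge 2^N$, contradicting $(\cdot)^d$ growth), so $K$ is generated by $O(qd\log d)$ elements of $B(O(d\log d))$. Iterating this up the tower (and through each abelian quotient) shows each $G_i$ is finitely generated and, by Remark~\ref{growth}, remains a growth group of order $\le d$; each level costs only a factor $O(d\log d)$ in scale and in generator count. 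Writing $G_i/G_{i+1}\cong\Z^{k_i}\times(\text{finite})$ and noting that $\Z^{k_i}$, a quotient of $G_i$, has growth order $\ge k_i$, we get $k_i\le d$, hence $\sum_i k_i\le ld$; so $G$ is built from $H$ by at most $ld$ successive $\Z$--extensions plus $l$ finite extensions, up to finite index.

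\textbf{The core step (Wolf).} It remains to show each $\Z$--extension raises the Hirsch length of a virtually nilpotent group by at most one. So let $0\to M\to\tilde G\to\Z\to0$ with $M$ virtually nilpotent of Hirsch length $\le\tilde r\le r+ld$, $t\in\tilde G$ a lift of a generator, $\phi\in\mathrm{Aut}(M)$ conjugation by $t$. Then $\phi$ preserves the lower central series of the characteristic nilpotent subgroup of $M$, acting on its $j$th torsion--free layer $\cong\Z^{d_j}$ ($\sum_j d_j\le\tilde r$) as an integer matrix $\phi_j\in GL_{d_j}(\Z)$. The heart of the matter is the claim that, after replacing $t$ by a bounded power, every $\phi_j$ is unipotent. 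The eigenvalues of $\phi_j$ are algebraic integers; if one were not a root of unity then, since $\det\phi_j=\pm1$, the characteristic polynomial of $\phi_j$ --- monic of degree $\le\tilde r$ --- would have Mahler measure $>1$, hence $\ge 1+\delta(\tilde r)$ by a Lehmer--type lower bound (e.g.\ Dobrowolski's), which is uniform in the entries of $\phi_j$ (these may be as large as a power of $R_0$). A standard computation then produces at least $(1+\delta(\tilde r))^{cK}$ distinct elements of $\tilde G$ of word length $O(K)$ --- products $m^{\eps_0}(tm^{\eps_1}t^{-1})\cdots(t^Km^{\eps_K}t^{-K})$ of $t$--conjugates of a suitable $m\in M$ aligned with the dominant eigendirection --- contradicting polynomial growth at any scale $\rho$ that is a sufficiently large polynomial in $d$ and $\tilde r$, which is guaranteed by the hypothesis on $R_0$. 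By Kronecker's theorem the eigenvalues are then roots of unity, of order bounded in terms of $\tilde r$, so $\phi^m$ is unipotent on every layer for some $m=m(\tilde r)$; this last passage, and the killing of torsion, are pure group theory and may cost an uncontrolled finite index. Finally a unipotent $\Z$--action on a torsion--free nilpotent group generates a nilpotent group whose Hirsch length exceeds that of the module by exactly one (iterated commutators with $t$ descend the lower central series, so that series terminates), giving $\tilde G$ virtually $(\tilde r+1)$--nilpotent.

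\textbf{Assembly and the main obstacle.} Feeding the core step up the tower, $G$ is virtually $\bigl(r+\sum_i k_i\bigr)$--nilpotent, hence virtually $(r+ld)$--nilpotent, hence virtually $(r+lO(1)^d)$--nilpotent, as claimed. The principal obstacle is exactly the core Wolf step in its quantitative form: in the classical proof one only needs polynomial growth asymptotically, so the exponential growth rate $1+\delta$ is irrelevant, whereas here the growth hypothesis lives at a single scale, so one genuinely needs the Mahler measure --- when $>1$ --- to be bounded below by a quantity depending only on $\tilde r$ and not on the entries of $\phi_j$; this is precisely what a sub--Lehmer estimate provides, and it is then a matter of checking that the residual scale after the $O(dl)$ reduction steps, namely $R_0^{1-o(1)}/(Cd)^{O(dl)}$, still clears the resulting threshold, which forces the form $R_0\ge r^C(Cd)^{Cdl}$. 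The remaining work --- the quantitative Milnor lemma, the finite generation of the $G_i$, and the repeated need (via Lemmas~\ref{trans},~\ref{gen-red},~\ref{zonk},~\ref{index}) to keep generating sets inside balls of controlled radius and to dispose of torsion and small finite quotients --- is technically involved but conceptually routine.
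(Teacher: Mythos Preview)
Your overall strategy matches the paper's: reduce to a tower of $\Z$-extensions over $H$ (the paper does this by induction on $l$ via Lemma~\ref{zonk} and then on the number of generators of each abelian quotient, you by building the tower directly and bounding $\sum k_i\le ld$; both cost a factor $(Cd)^{O(dl)}$ in scale), and at each $\Z$-extension invoke a Dobrowolski/Kronecker dichotomy to force virtual unipotency of the conjugation action on the torsion-free nilpotent core. The gap is in your core Wolf step. You claim the products $\prod_j(t^jmt^{-j})^{\eps_j}$ yield $(1+\delta(\tilde r))^{cK}$ distinct elements of $\tilde S$-word length $O(K)$, contradicting growth at a scale $\rho$ that is merely polynomial in $d,\tilde r$. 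But ``word length $O(K)$'' forces $\|m\|_{\tilde S}=O(1)$, while ``aligned with the dominant eigendirection'' forces $m$ to have a prescribed nonzero image in a layer $\Z^{d_j}$ of the characteristic torsion-free nilpotent subgroup $N\le M$ --- and $N$ sits at \emph{ineffective} index in $M$, so such an $m$ may have ineffectively large $\tilde S$-length (and for $j\ge 2$ a generator of $M$ typically projects to zero). Even on the top layer, the implicit constant in your distinctness count depends on the $\lambda$-component of $\pi(m)$ and on the torsion of $M^{\mathrm{ab}}$, neither of which is controlled by $d,\tilde r$. Consequently the scale at which your exponential count actually beats $\rho^d$ is ineffective, and the hypothesis $R_0\ge r^C(Cd)^{Cdl}$ cannot guarantee you reach it. This is exactly what the paper flags as ``a key new technical difficulty in this single-scale setting, not present in earlier multi-scale work'': the nilpotent subgroup of $H$ may not be visible at the one scale $R_0$ that is directly controlled.

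The paper's resolution is Proposition~\ref{slowg}. From the single-scale hypothesis it manufactures (by a pigeonhole on conjugated balls) a generating set $\tilde S\subset B_S(R_0^{1/10})\cap H$ satisfying the \emph{rate} bound $\|T^nh\|_{\tilde S}\ll\exp(|n|/R_0^{0.01})\|h\|_{\tilde S}$ for \emph{all} $n\in\Z$. This rate transfers to the word metric $S'''$ on the torsion-free subgroup $H'''$ via an ineffective comparison constant $M$ (equation~\eqref{moist}). The Wolf dichotomy then becomes a comparison of exponential rates rather than of ball sizes: a non-unipotent layer would, by Proposition~\ref{dich}, give $\|T^nh\|_{S'''}\gg\exp(cn/r^{O(1)})$, and since $R_0\ge r^C$ ensures $c/r^{O(1)}>1/R_0^{0.01}$, one simply takes $n\to\infty$ qualitatively to swallow $M$ and all other ineffective constants. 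The crucial point is that Proposition~\ref{slowg} decouples the effective datum (the rate $1/R_0^{0.01}$, controlled by $R_0\ge r^C$) from the ineffective passage to the torsion-free subgroup; your direct ball-counting keeps these entangled and therefore cannot close.
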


We prove this proposition in Sections \ref{wolfsec-1}-\ref{wolfsec-4}.  The arguments are quite different from those used to prove Proposition \ref{trivcor2}, being related to the arguments used by Milnor \cite{milnor} and Wolf \cite{wolf}.  A key new technical difficulty in this ``single-scale'' setting, not present in earlier ``multi-scale'' work, is that the index of the nilpotent subgroup of $H$ may be so large that this subgroup is not ``visible'' at the one scale $R_0$ that we directly control.

In the remainder of this section we show how the above three propositions imply Theorem \ref{main-thm2} (modified as discussed at the end of Section \ref{compact-sec}).  From Lemma \ref{base} we see that Theorem \ref{main-thm2} holds for $d < 1$.  We may thus assume inductively that $d \geq 1$ and that the claim has already been proven for $d-0.9$.  By Proposition \ref{trivcor2}, we see that either conclusion (i) or conclusion (ii) of that proposition holds.  If conclusion (i) holds, then $G$ is finite and the claim follows.  If instead conclusion (ii) holds, then by induction hypothesis we see that $( (G')^{(l)}, (S')^{(l)})$ is virtually $C^{d-0.9}$-nilpotent.  On the other hand, $G' / (G')^{(l)}$ is clearly solvable of derived length at most $l = O(d^{O(1)})$.
Applying Proposition \ref{qmw} to the short exact sequence
$$ 0 \to (G')^{(l)} \to G' \to G' / (G')^{(l)} \to 0$$
we conclude (if $C$ is large enough) that $G'$ is virtually $C^d$-virtually nilpotent.  Since $G'$ is a a finite index subgroup of $G$, the claim then follows.

\section{First step for Proposition \ref{trivcor2}: produce a non-trivial Lipschitz almost harmonic function}\label{kleinersec-1}

We now begin the proof of Proposition \ref{trivcor2}, following Kleiner \cite{kleiner}.
Kleiner's argument is based on harmonic functions, and in particular on studying the class of functions $u: G \to \R$ which are both Lipschitz and harmonic with respect to the set $S$ of generators.  In our quantitative applications, the group $G$ could well be finite (with extremely large cardinality), and so there may not be any harmonic functions other than the constants.  To deal with this we need to consider instead a somewhat larger class of \emph{almost harmonic functions}.  A good example is provided on the finite group $G = \Z/N\Z$ (with the standard generators $+1,-1$, and with $N$ large) by the function $u(x) := N \sin(2\pi x/N)$.  This function is Lipschitz (with Lipschitz norm approximately $2\pi$), and ``almost harmonic'' in the sense that $u(x+1) + u(x-1) - 2u(x) = O(1/N)$ for all $x$. 

It is known (see~\cite[Appendix]{kleiner} and the references therein) 
that infinite groups admit non-trivial Lipschitz harmonic functions.  
It is thus not surprising that all ``large'' groups (finite or infinite) 
admit non-trivial \emph{almost} harmonic Lipschitz functions.  
To see this, let us first formalize our definitions.

\begin{definition}  Let $u: G \to \R$ be a function.  The \emph{gradient} $\nabla u: G \to \R^S$ of a $u$ is defined by the formula
$$ \nabla u(x) := (u(xs) - u(x))_{s \in S}$$
so in particular
$$ |\nabla u(x)| := (\sum_{s \in S} |u(xs) - u(x)|^2)^{1/2}.$$
Dually, given a vector-valued function $F = (F_s)_{s \in S}: G \to \R^S$, we can define its \emph{divergence} $\nabla \cdot F: G \to \R$ by the formula
$$ \nabla \cdot (F_s)_{s \in S}(x) = \sum_{s \in S} F_s(xs^{-1}) - F_s(x).$$
The \emph{Laplacian} of a function $u: G \to \R$ is defined by the formula
$$ \Delta u := -\nabla \cdot \nabla u$$
or more explicitly (using the symmetry of $S$)
$$ \Delta u(x) = 2 |S| u(x) - 2 \sum_{s \in S} u(x s).$$
The \emph{Lipschitz (semi)-norm} $\|u\|_{\Lip}$ of $u$ is defined as
$$ \| u \|_{\Lip} := \| \nabla u \|_{\ell^\infty(G)} = \sup_{x \in G} |\nabla u(x)|.$$
A function $u: G \to \R$ is \emph{harmonic} if $\Delta u = 0$.
If $\eps > 0$, we say that a function $u: G \to \R$ is \emph{$\eps$-harmonic Lipschitz} if $\|u\|_{\Lip} \leq 1$ and $\|\Delta u\|_{\ell^\infty(G)} \leq \eps$.
\end{definition}

\begin{remark} The conventions for the Laplacian $\Delta$ and Lipschitz norm may differ by some (ultimately irrelevant) constant factors from other definitions in the literature.  With these conventions, the function $\frac{1}{2\pi} N \sin(2\pi x/N)$ is $O(1/N)$-harmonic Lipschitz on $\Z/N\Z$.
\end{remark}

\begin{proposition}[Infinite groups have non-constant Lipschitz harmonic functions]\label{infilip}  Let $R \geq 1$, and let $(G,S)$ be a finitely generated group.  Then at least one of the following statements is true:
\begin{itemize}
\item $G = B_S(R)$.
\item There exists a $O(|S| R^{-1/3})$-harmonic Lipschitz function $u: G \to \R$ with $|\nabla u(\id)| \geq 1/|S|$.
\end{itemize}
\end{proposition}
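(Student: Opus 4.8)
The plan is to run a finitary form of the ``existence of a Lipschitz harmonic function'' step, using the heat kernel of the random walk on the Cayley graph, spectral calculus for the Markov (averaging) operator, and Young's inequality, as advertised in the introduction. Throughout I replace $S$ by $S\cup\{e\}$ (a lazy walk), which leaves every $B_S(r)$ unchanged, alters $\Delta$ by a harmless constant, and crucially makes the averaging operator $P$ (so $Pf(x)=\tfrac1{|S|}\sum_{s\in S}f(xs)$ and $\Delta=2|S|(I-P)$) positive semidefinite, i.e. $\sigma(P)\subset[0,1]$. We may assume $G\neq B_S(R)$. Write $\phi_m:=\mu^{*m}=P^m\delta_e$ for the $m$-step distribution, a probability density on $B_S(m)$; for $m$ even one has $\phi_m(\id)=\|\phi_m\|_{\ell^\infty}=\|\phi_{m/2}\|_{\ell^2}^2\ge 1/|B_S(m/2)|$, and $P$ is a contraction for $\|\cdot\|_{\Lip}$ (elementary, since $\nabla(Pf)$ is a convex average of translates of $\nabla f$). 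The target function will be produced in the form $u=c\cdot v(g_0\,\cdot\,)$: here $v$ is a Lipschitz, almost-harmonic function built from the heat kernel, $g_0$ is chosen so that a point realising (most of) the supremum of $|\nabla v|$ is moved to $\id$, and $c$ normalises $\|u\|_{\Lip}$ to $1$; since both $\Delta$ and $\|\cdot\|_{\Lip}$ are invariant under left translation, this forces $|\nabla u(\id)|$ to be comparable to $\|u\|_{\Lip}=1$.

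The almost-harmonicity estimate is where the two named tools enter. Spectral calculus for $P$ gives, with $\sigma$ the spectral measure of $\delta_e$,
\[
\|(I-P)P^k\delta_e\|_{\ell^2}^2=\int_0^1(1-t)^2t^{2k}\,d\sigma(t)\ \le\ \sup_{0\le t\le1}(1-t)^2t^{2k}\ \le\ \tfrac1{k^2},
\]
while Young's inequality in the form $\|f*g\|_{\ell^\infty}\le\|f\|_{\ell^2}\|g\|_{\ell^2}$, applied after the factorisation $\phi_m-\phi_{m+1}=P^{m/2}\big(\phi_{m/2}-\phi_{m/2+1}\big)=\big(\phi_{m/2}-\phi_{m/2+1}\big)*\phi_{m/2}$, yields
\[
\|\Delta\phi_m\|_{\ell^\infty}=2|S|\,\|\phi_m-\phi_{m+1}\|_{\ell^\infty}\ \ll\ \frac{|S|}{m}\,\sqrt{\phi_m(\id)}.
\]
Taking $m$ a suitable power of $R$ (around $R^{2/3}$) and dividing by the lower bound on $\|\phi_m\|_{\Lip}$ discussed below produces $\|\Delta u\|_{\ell^\infty}=\|\Delta\phi_m\|_{\ell^\infty}/\|\phi_m\|_{\Lip}\ll|S|R^{-1/3}$. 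A cleaner packaging of the same mechanism: for \emph{any} $h$ one has the telescoping identity $\tfrac1N\sum_{n=1}^N\Delta P^nh=\tfrac{2|S|}{N}(Ph-P^{N+1}h)$, so $u:=\tfrac1N\sum_{n=1}^N P^nh$ satisfies $\|\Delta u\|_{\ell^\infty}\le \tfrac{4|S|}{N}\|h\|_{\ell^\infty}$ and $\|u\|_{\Lip}\le\|h\|_{\Lip}$; moreover if $h$ is harmonic on $B_S(N+1)$ then $P^nh$ agrees with $h$ near $\id$ for $n\le N$, so $\nabla u(\id)=\nabla h(\id)$. This reduces the whole lemma to exhibiting a \emph{bounded} Lipschitz function, harmonic on a ball of radius $\sim N$ about $\id$, with $\|h\|_{\ell^\infty}/(N\,\|h\|_{\Lip})$ small and a non-degenerate gradient at $\id$.

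The step I expect to be the main obstacle is exactly this last point: a usable \emph{lower} bound on the relevant Lipschitz norm, equivalently ruling out the degenerate regime where, by scale $\sim R$, the walk has either already mixed or escaped. This is precisely where the dichotomy ``$G=B_S(R)$ or \ldots'' must be fed back in. For the ``slow'' (near-amenable) regime one pigeonholes over a dyadic range of scales below $R$: using $\prod_m \phi_{2m}(\id)/\phi_{2m+2}(\id)=\phi_2(\id)/\phi_{2M}(\id)\le|B_S(M)|$ and convexity of $k\mapsto\phi_k(\id)$, some scale $m$ carries a substantial Dirichlet energy $\sum_x|\nabla\phi_m(x)|^2=2|S|(\phi_{2m}(\id)-\phi_{2m+1}(\id))$, which combined with $\operatorname{supp}\nabla\phi_m\subset B_S(m+1)$ and $\|\phi_m\|_{\Lip}\ge|\phi_m(\id)-\phi_m(g)|/\|g\|_S$ gives the needed lower bound; if no scale works, the heat kernel is ``stuck'' (spectral mass forced near $t=1$), which together with $\sum_x\phi_m(x)=1$ and $\phi_m(\id)\ge1/|B_S(m/2)|$ forces $|B_S(r+1)|=|B_S(r)|$ for some $r<R$, i.e. $G=B_S(R)$, the excluded case. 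In the complementary ``large ball'' regime the heat kernel is too concentrated, and one instead produces $u$ from a bounded harmonic function directly — e.g. the solution of the Dirichlet problem on $B_S(R)$ with non-constant $\{0,1\}$ boundary data, which is Lipschitz for free (it is bounded) and, after translating by a point of largest gradient and applying the averaging $\tfrac1N\sum_{n=1}^N P^n(\cdot)$ to smear out the error near $\partial B_S(R)$, has the desired properties with $\Delta u$ genuinely (and easily) small. Reconciling these two regimes, and matching the exponents so that the final bound lands at $|S|R^{-1/3}$, is the technical heart; the rest (laziness, Lipschitz-contractivity of $P$, left-invariance of $\Delta$ and $\|\cdot\|_{\Lip}$, and the spectral and Young estimates above) is routine.
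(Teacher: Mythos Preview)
There is a genuine gap. Your claim that the averaging operator $P$ is a contraction for $\|\cdot\|_{\Lip}$ is false in nonabelian groups. The justification you give (``$\nabla(Pf)$ is a convex average of translates of $\nabla f$'') conflates left and right translation: $Pf=\frac{1}{|S|}\sum_{t\in S}R_{t^{-1}}f$ is an average of \emph{right} translates, while $\nabla$ (built from right differences) commutes with \emph{left} translation only. Concretely, in the free group on $\{a,b\}$ with $S=\{a^{\pm1},b^{\pm1}\}$, set $u(\id)=0$, $u(a)=-1$, $u(a^{-1})=u(b)=u(b^{-1})=0$, force $u(a^2)=u(ab)=u(ab^{-1})=-1$ (from the constraint at $a$), set $u$ equal to $1/\sqrt3$ on the three outward neighbours of each of $a^{-1},b,b^{-1}$, and extend radially; then $\|u\|_{\Lip}=1$ but $|\nabla Pu(\id)|^2=1+\tfrac{3\sqrt3}{8}>1$. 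This breaks your ``cleaner packaging'' $u=\frac1N\sum_{n=1}^N P^nh$: iterating $P$ can inflate $\|\cdot\|_{\Lip}$ by a factor growing with $N$, and you lose exactly the control you need. It also undermines the Dirichlet--problem branch, where you rely on the same contraction to preserve the Lipschitz bound while smearing out boundary error.

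The paper's proof avoids this entirely. The function $u$ is always built as a \emph{convolution} $u=h*f$ (or $u=H*F$), and the Lipschitz bound comes from $\nabla u=h*\nabla f$ and Young's inequality $\|\nabla u\|_{\ell^\infty}\le\|h\|_{\ell^\infty}\|\nabla f\|_{\ell^1}$ (resp.\ $\|H\|_{\ell^2}\|\nabla F\|_{\ell^2}$), never from any Lipschitz contractivity of $P$. The dichotomy is on the $\ell^1$ gradient of the Ces\`aro mean $f=\frac1R\sum_{m=0}^R\sigma^{(m)}$. If $\|\nabla f\|_{\ell^1}\ge R^{-2/3}$ one takes $h=\sgn(f_s(\cdot^{-1}))/\|\nabla f\|_{\ell^1}$ for a well--chosen $s\in S$; this is the ``nonamenable'' case and uses only $\|\Delta f\|_{\ell^1}\ll|S|/R$. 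Otherwise $f$ is $\ell^1$--almost--invariant, so $F:=f^{1/2}$ is an $\ell^2$--almost--invariant unit vector (this is where $G\ne B_S(R)$ enters: $F$ is nonconstant), and one spectrally projects $F$ --- not $\delta_e$ --- to the bottom of the spectrum to obtain $F'$ with $\|\Delta F'\|_{\ell^2}\ll|S|R^{-1/3}\|\nabla F'\|_{\ell^2}$, then sets $u=H*F$ with $H$ built from a directional difference of $F'$. Your spectral bound $\|(I-P)P^k\delta_e\|_{\ell^2}\le 1/k$ is correct but aimed at the wrong vector: applied to $\delta_e$ it yields no usable lower bound on $\|\phi_m\|_{\Lip}$ in the nonamenable regime, where $\phi_m(\id)$ decays exponentially and your ratio $\|\Delta\phi_m\|_{\ell^\infty}/\|\phi_m\|_{\Lip}$ cannot be made small.
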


\begin{remark} Comparing this with the example $u(x) = \frac{1}{2\pi} N \sin(2\pi x/N)$ on $\Z/N\Z$ we see that the exponent $-1/3$ in the Proposition should probably be a $-1$.  However, any positive exponent will suffice for our application.
\end{remark}

\begin{proof} 
Given two functions $u, v: G \to \R$, we formally define the \emph{convolution} $u*v: G \to \R$ by the formula
$$ u*v(x) := \sum_{y \in G} u(y) v(y^{-1} x) = \sum_{y \in G} u(x y^{-1}) v(y).$$
By Young's inequality, convolution is well-defined whenever $u \in \ell^p(G), v \in \ell^q(G)$ and $1/p+1/q=1/r+1$ for some $1 \leq p,q,r \leq \infty$, in which case we have
\begin{equation}\label{young}
\|u*v\|_{\ell^r(G)} \leq \|u\|_{\ell^p(G)} \|v\|_{\ell^q(G)}.
\end{equation}
We observe the pleasant identities
\begin{align}
\nabla( f_1 * f_2 ) &= f_1 * \nabla f_2 \label{nabla-move}\\
\nabla \cdot ( f * F ) &= f * (\nabla \cdot F) \label{div-move}
\end{align}
and thus
\begin{equation}\label{harm}
\Delta(f_1 * f_2) = f_1 * \Delta f_2.
\end{equation}

Fix $R$.  We may assume that $G \neq B_S(R)$, since are done otherwise.  Let $\sigma$ be the measure $\sigma = \frac{1}{|S|} \sum_{s \in S} \delta_s$, where $\delta_s$ is the Kronecker delta.  Observe that $\Delta u = 2|S| ( u * \sigma - u )$ for any $u$, where $u * \sigma(x) := \sum_y u(xy^{-1}) \sigma(y)$ is the usual convolution operator.  

Let $\sigma^{(m)} := \sigma * \ldots * \sigma$ be the $m$-fold convolution of $\sigma$, and let $f: G \to \R$ be the function
$$ f := \frac{1}{R} \sum_{m=0}^{R} \sigma^{(m)}.$$
Then we have $\|f\|_{\ell^1(G)} = 1$ and $\|\Delta f \|_{\ell^1(G)} \ll |S| / R$.  We divide into two cases.

{\bf Case 1 (``non-amenable'' case):} $\| \nabla f \|_{\ell^1(G)} \geq R^{-2/3}$.  By the pigeonhole principle, we can then find $s \in S$ such that the function $f_s(x) := f(xs) - f(x)$ has $\ell^1$ norm at least $|S|^{-1} \| \nabla f \|_{\ell^1(G)}$.  If we then define $u := h * f$, where $h(x) := \sgn(f_s(x^{-1})) / \| \nabla f \|_{\ell^1(G)}$, we see from \eqref{young}, \eqref{nabla-move} that
$$ \| \nabla u \|_{\ell^\infty(G)} \leq 1$$
and
$$ |u(s)-u(\id)| \geq 1/|S|$$
while from \eqref{young}, \eqref{harm} we have
$$ \| \Delta u \|_{\ell^\infty(G)} \ll (|S|/R) / R^{-2/3}$$
and the claim follows.

{\bf Case 2. (``amenable'' case):} $\| \nabla f \|_{l^1(G)} < R^{-2/3}$.  If we set $F := |f|^{1/2}$ then we have the pointwise bound
$$ |\nabla F| \leq |S|^{1/2} |\nabla f|^{1/2}$$
thanks to the elementary estimate $|x|^{1/2} - |y|^{1/2} \leq |x-y|^{1/2}$ for $x,y \geq 0$.  Thus we have
$$ \|F\|_{\ell^2(G)} = 1 \hbox{ and } \| \nabla F \|_{\ell^2(G)} \leq |S|^{1/2} R^{-1/3}.$$
Also, $F$ is supported on $B_S(R)$.  Since $G \neq B_S(R)$ by hypothesis, $F$ cannot be constant.  In particular
$$ \|\nabla F\|_{\ell^2(G)} > 0.$$
Thus if $\mu_F$ is the spectral measure of $\Delta$ relative to $F$, we see that $\mu_F$ is a probability measure on $[0,1]$ with
$$ 0 < \int_0^1 x\ d\mu_F(x) = \| \nabla F \|_{\ell^2(G)}^2 \leq |S| R^{-1/3}.$$
This implies that 
$$ 0 < \int_0^{|S| R^{-1/3}} x^2\ d\mu_F(x) \leq |S| R^{-1/3} \int_0^{|S| R^{-1/3}} x\ d\mu_F(x).$$
If we then let $F' \in \ell^2(G)$ be the spectral projection of $F$ to $[0,|S| R^{-1/3}]$, we conclude that 
\begin{equation}\label{fp}
0 < \|\Delta F'\|_{\ell^2(G)}^2 \leq |S| R^{-1/3} \|\nabla F'\|_{\ell^2(G)}^2.
\end{equation}
By the pigeonhole principle, we can find $s \in S$ such that the function $F'_s(x) := F'(xs) - F'(x)$ obeys the lower bound
\begin{equation}\label{fh}
 \| F'_s\|_{\ell^2(G)}^2 \geq |S|^{-1} \|\nabla F'\|_{\ell^2(G)}^2.
\end{equation}
If we define $u := H * F$, where $H(x) := F'_s(x^{-1}) / \|\nabla F \|_{\ell^2(G)}^2$, we see from \eqref{young}, \eqref{div-move} that
$$ \| \nabla u \|_{\ell^\infty(G)} \leq 1$$
while from \eqref{fh} we have
$$ |u(s)-u(\id)| \geq 1/|S|$$
and from \eqref{young}, \eqref{harm}, \eqref{fp} we have
$$ \| \Delta u \|_{\ell^\infty(G)} \leq |S| R^{-1/3} $$
and the claim follows.
\end{proof}

\noindent {\bf Remark.} See also the recent~\cite{lp} for a related 
probabilistic construction of (Hilbert space valued) harmonic maps on amenable groups. 

Once one has one non-trivial almost harmonic function $u$, one can then create a further family $\rho(g) u$, $g \in G$ of almost harmonic functions by translation: $\rho(g) u(x) := u(g^{-1} x)$.  Kleiner's approach to Gromov's theorem revolves around a study of this family, starting with the fundamental observation that this family is (approximately) finite-dimensional.  We now turn to this important fact.

\section{Second step for Proposition \ref{trivcor2}: Kleiner's theorem}\label{kleinersec-2}

Kleiner\cite{kleiner}, building upon work of Colding and Minicozzi\cite{cold}, proved that in a group of polynomial growth the space of Lipschitz harmonic functions is finite-dimensional; indeed, an inspection of the argument shows that the dimension of this space is bounded by some constant $C(d)$ depending only on the order $d$ of growth.  The main objective of this section is to prove the following quantitative version of this fact:

\begin{theorem}[Quantitative Kleiner theorem]\label{qkt}  There exists an absolute constant $C > 0$ such that for any $0 < \kappa < 0.1$,
and any $(R_0,d)$-growth group $(G,S)$ with $d \geq 1$ and
\begin{equation}\label{rbit}
 R_0 \geq K := (C|S|)^{C d^3/\kappa^2},
\end{equation}
there exists a finite-dimensional subspace $V$ of $\R^G$ of dimension
$$ \dim(V) = O(|S|)^{O(d^3/\kappa^2)}$$
such that for every $R_0^{-K}$-harmonic Lipschitz function $u: G \to \R$, there exists $v \in V$ such that
$$ \| u-v \|_{\ell^2(B_S(R_0^{1-\kappa}))} \leq R_0^{-100d}.$$
\end{theorem}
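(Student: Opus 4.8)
The plan is to mimic the Colding--Minicozzi/Kleiner argument that a group of polynomial growth admits only a finite-dimensional space of Lipschitz harmonic functions, but to run it \emph{quantitatively} and with the approximate harmonicity/finiteness that the one-scale hypothesis forces on us. The three classical ingredients are: (a) a reverse Poincar\'e (Caccioppoli) inequality, which for a harmonic function $u$ bounds $\|\nabla u\|_{\ell^2(B_S(R))}^2$ by $R^{-2}\|u\|_{\ell^2(B_S(2R))}^2$ (up to constants depending on $|S|$); (b) a Poincar\'e inequality on balls, coming from polynomial growth, controlling the $\ell^2$ oscillation of $u$ on $B_S(R)$ by $R^2 \cdot (\text{growth factor}) \cdot \|\nabla u\|_{\ell^2(B_S(O(R)))}^2$; and (c) a dimension-counting lemma (the ``rank versus doubling'' argument) that says: if $V$ is a finite-dimensional space of functions, and if passing from the ball $B_S(R)$ to $B_S(\lambda R)$ multiplies the ``$\ell^2$-volume'' of $V$ (the Gram determinant, or equivalently $\sum$ of eigenvalues of the restriction form) by at most a bounded factor $M$, then $\dim V \ll_\lambda \log M$. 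Here all the functions involved are only $R_0^{-K}$-harmonic rather than genuinely harmonic, so in steps (a) and (b) one picks up error terms of size a power of $R_0^{-K}$ times the Lipschitz norm; the point of choosing $K$ as large as $(C|S|)^{Cd^3/\kappa^2}$ is precisely to make these errors utterly negligible (in particular smaller than the target $R_0^{-100d}$) at every scale $R \le R_0^{1-\kappa}$ that the argument visits.

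First I would set up the function spaces carefully: instead of working with the (possibly infinite-dimensional, or ill-behaved at finite scales) space of \emph{all} $\eps$-harmonic Lipschitz functions, I would fix a \emph{single} scale $R_* := R_0^{1-\kappa}$ (or a dyadic range of scales below it), restrict $\eps$-harmonic Lipschitz functions to $B_S(O(R_*))$, and study the resulting restriction map. The key quantity is, for a finite-dimensional subspace $W$ of $\eps$-harmonic Lipschitz functions, the ratio of Gram determinants $\det\big(\langle u_i, u_j\rangle_{\ell^2(B_S(\lambda R))}\big) / \det\big(\langle u_i, u_j\rangle_{\ell^2(B_S(R))}\big)$ for a suitable dilation factor $\lambda$ (say $\lambda = 2$ or a fixed constant $> 1$). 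Combining (a), (b) and the growth bound $|B_S(\lambda R)| \le (\lambda R)^{d/(1-\kappa)} \le \lambda^{O(d)} |B_S(R)|$ (which I get from Remark~\ref{growth} applied to the $(R_0^{1-\kappa}, d/(1-\kappa))$-growth situation, or more directly from a pigeonholed doubling scale), the Caccioppoli--Poincar\'e iteration shows that this Gram-determinant ratio, raised to the $1/\dim W$ power, is bounded by $\lambda^{O(d)}$ \emph{for every} such $W$, modulo the tiny $\eps$-errors. Then the dimension-counting lemma — proved by a clean linear-algebra argument (choose a basis adapted to the smaller ball, compare volumes, and observe each basis vector contributes a multiplicative factor $\ge$ some explicit constant $> 1$) — forces $\dim W \le O(|S|)^{O(d^3/\kappa^2)}$. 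This uniform dimension bound means the space of restrictions of $\eps$-harmonic Lipschitz functions to $B_S(R_*)$ is itself finite-dimensional of that dimension; I then let $V$ be (an arbitrary extension to $\R^G$ of) that space, or more safely a net/span of finitely many translates $\rho(g)u$, and the approximation statement $\|u - v\|_{\ell^2(B_S(R_0^{1-\kappa}))} \le R_0^{-100d}$ comes out of how well the finitely many ``extremal'' functions used in the volume-ratio argument span the whole (approximately finite-dimensional) family, with the $\eps$-harmonicity slack absorbed into the $R_0^{-100d}$.

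The main obstacle — and this is exactly what the overview of proof flags — is the \emph{effectivity and non-degeneracy} of the linear algebra: in Kleiner's qualitative argument one simply says ``the space is finite-dimensional, pick a basis, the Gram determinant is positive'', but here I need (i) an \emph{a priori} upper bound on the scale $R_*$ at which a space of the expected dimension already ``injects'' into the ball — this is why the hypothesis forces $R_0$ to be at least $(C|S|)^{Cd^3/\kappa^2}$, giving room for the $O(d^3/\kappa^2)$-fold iteration of doubling-scale pigeonholing — and (ii) a quantitative lower bound on the relevant Gram determinants, or rather a way to avoid needing one, since at a finite scale a nominally $k$-dimensional family might have a near-degenerate restriction to $B_S(R)$. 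The standard trick to finesse (ii) is to not insist that $V$ itself be $k$-dimensional with good conditioning, but to phrase everything as: \emph{every} $\eps$-harmonic Lipschitz $u$ lies within $R_0^{-100d}$ (in $\ell^2(B_S(R_*))$) of the span of a fixed finite set of such functions, and prove this by a greedy/maximal construction — repeatedly throw into $V$ any $\rho(g)u$ that is $R_0^{-100d}$-far from the current span; the dimension bound above (applied with $W$ the span of the chosen functions, whose restrictions are automatically well-separated, hence well-conditioned) caps the number of steps, so the process terminates. Handling the $\eps$-harmonicity cleanly through Caccioppoli (where one integrates by parts against a cutoff and the $\Delta u$ term is no longer zero) is the other place where care — though not deep difficulty — is needed: one chooses Lipschitz cutoffs supported on $B_S(\lambda R)$ and equal to $1$ on $B_S(R)$, and the error term is $\lesssim |B_S(\lambda R)|^{1/2} \cdot \|\Delta u\|_{\ell^\infty} \cdot \|u\|_{\ell^2(B_S(\lambda R))} \le R_0^{d} \cdot R_0^{-K} \cdot (\text{poly in } R_0)$, which is crushed by the choice of $K$.
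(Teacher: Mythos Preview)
Your proposal is essentially correct and follows the paper's approach: Poincar\'e (Lemma~\ref{poincare}) plus reverse Poincar\'e (Lemma~\ref{rev}), a Gram-determinant volume $\Vol_R$, iteration across $\sim \kappa\log R_0$ dyadic scales below $R_0$, and finally the greedy construction of $V$ that you describe in your last paragraph --- this last step is exactly how the paper concludes.

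One point deserves sharpening. Your item (c), the ``dimension-counting lemma'' as you state it (if the $\ell^2$-volume ratio between scales is at most $M$ then $\dim W \ll \log M$), is not the mechanism the paper uses, and as stated it is not true without further hypotheses on $W$. The paper's engine is Proposition~\ref{codim}, which runs in the opposite direction: one introduces an auxiliary parameter $\delta$ and covers $B_S(R)$ by balls of radius $2\delta R$ centred at a $2\delta R$-net $x_1,\dots,x_m$, where $m \le |B_S(2R)|/|B_S(\delta R)|$; the subspace $V' \subset W$ of functions with zero mean on each small ball (and vanishing at the identity) has codimension at most $m+1$, and on $V'$ the Poincar\'e inequality at scale $\delta R$ composed with the reverse Poincar\'e at scale $R$ gives $Q_R \ll \delta^2 \cdot (\text{growth factor}) \cdot Q_{4R}$. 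Hence once $k = \dim W \ge 2(m+1)$ one gets $\Vol_R \le (C|S|^{O(1)}\delta)^{k/2}\Vol_{4R} + (\eps\text{-error})$. Iterating over the available scales and choosing $\delta$ so that $k \sim C^{(d/\kappa)\log(1/\delta)}$ drives $\Vol_{R_0^{1-\kappa}}$ below $R_0^{-100kd}$ (Corollary~\ref{qkt3}), which is what feeds into the greedy algorithm. Your Caccioppoli error analysis and your treatment of the non-degeneracy obstacle via the greedy/well-separated construction are both on target; it is only the shape of the volume-decrease step itself that needs to be replaced by this mean-zero-subspace argument.
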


We will prove Theorem \ref{qkt} in several stages, following \cite{kleiner}.  For any $x \in G$ and $r \geq 1$, we write $B(x,r) := x \cdot B_S(r)$ for the ball of radius $r$ centered at $x$.  The first step is the following 
Poincar\'e inequality (compare with one first proved by Colding and 
Minicozzi in~\cite{cold}, and its adaptation to the group setting 
due to Kleiner-Saloff-Coste in~\cite[Theorem 2.2]{kleiner}):

\begin{lemma}[Poincar\'e inequality]\label{poincare}  Let $f: G \to \R$, $x \in G$, and $r \geq 1$. Let $f_{B(x,r)} := \frac{1}{|B(x,r)|} \int_{B(x,r)} f$ be the average value of $f$ on $B(x,r)$.  Then
$$ \| f - f_{B(x,r)} \|_{\ell^2(B(x,r))} \leq 2r \frac{|B_S(2r)|}{|B_S(r)|} \| \nabla f \|_{\ell^2(B(x,3r))}.$$
\end{lemma}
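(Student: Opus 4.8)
The plan is to prove the Poincar\'e inequality by the standard geodesic-path-averaging argument, adapted to the Cayley graph setting. The key idea is that for any two points $y, z \in B(x,r)$, the difference $f(y) - f(z)$ can be written as a telescoping sum of gradient terms $u(ws) - u(w)$ along a word geodesic from $y$ to $z$; since $\dist(y,z) \leq 2r$, this path has length at most $2r$, stays inside $B(x,3r)$, and so by Cauchy--Schwarz one gets $|f(y) - f(z)|^2 \leq 2r \sum_{w} |\nabla f(w)|^2$ where the sum is over the (at most $2r$) vertices $w$ on the path inside $B(x,3r)$.

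First I would reduce to controlling $\| f - f_{B(x,r)} \|_{\ell^2(B(x,r))}^2 \leq \frac{1}{|B(x,r)|} \sum_{y,z \in B(x,r)} |f(y)-f(z)|^2$, using the elementary fact that the mean minimizes the mean-square deviation (so the variance is at most the average of $|f(y)-f(z)|^2$ over all pairs, up to the normalization by $|B(x,r)|$ rather than $|B(x,r)|^2$). Then for each ordered pair $(y,z)$ I would fix a geodesic word representation $y^{-1}z = s_1 \cdots s_k$ with $k = \|y^{-1}z\|_S \leq 2r$, write $f(z) - f(y) = \sum_{j=1}^k \big( f(y s_1 \cdots s_j) - f(y s_1 \cdots s_{j-1}) \big)$, apply Cauchy--Schwarz in $j$ to get $|f(z)-f(y)|^2 \leq k \sum_{j=1}^k |f(ys_1\cdots s_j) - f(ys_1\cdots s_{j-1})|^2 \leq 2r \sum_{j} |\nabla f(y s_1 \cdots s_{j-1})|^2$, noting each intermediate point $y s_1 \cdots s_{j-1}$ lies in $B(x, r + 2r) = B(x,3r)$.

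The main technical point — and the place one must be slightly careful — is the counting step: after summing over all pairs $y,z \in B(x,r)$ one must bound, for each fixed vertex $w \in B(x,3r)$, the number of triples (ordered pair $(y,z)$, position $j$) for which $w$ is the $j$-th vertex on the chosen geodesic from $y$ to $z$. Here one uses that $y = w (s_{j-1} \cdots s_1)^{-1}$ lies in $B(w, 2r) \subset B_S$-translate, and similarly $z$ lies within distance $2r$ of $w$; so the number of such $(y,z)$ is at most $|B(w,2r)|^2$-ish, but more efficiently one observes $y$ and $z$ each range over a set of size at most $|B_S(2r)|$ and, crucially, one gets a ball-ratio factor rather than a squared ball size by organizing the sum as: $\sum_{y,z} \sum_j |\nabla f(\text{path vertex})|^2 \leq 2r \sum_{w \in B(x,3r)} |\nabla f(w)|^2 \cdot (\text{number of } (y,z,j) \text{ hitting } w)$, and bounding the latter count by roughly $2r \cdot |B_S(2r)| \cdot |B_S(2r)| / |B_S(r)|$ after dividing by $|B(x,r)| \geq |B_S(r)|$-type normalization. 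Balancing the powers of $r$ and the ball ratios to land exactly on the stated constant $2r \, |B_S(2r)| / |B_S(r)|$ (rather than a power of it) requires choosing the path decomposition and the order of summation judiciously — e.g. fixing $y$ and the geodesic, and letting $z$ vary only over the ball of radius $2r$ around $y$, then summing $y$ over $B(x,r)$ and using translation-invariance of the counting. This bookkeeping with the doubling-type ratio is the only real obstacle; everything else is the routine geodesic-telescoping-plus-Cauchy--Schwarz computation.
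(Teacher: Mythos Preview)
Your approach is correct and is in fact the more ``standard'' geodesic-plus-Cauchy--Schwarz-plus-multiplicity argument (as in Colding--Minicozzi and Kleiner). The counting step you worry about is straightforward: since your geodesic word depends only on $g=y^{-1}z$, for each fixed $g\in B_S(2r)$ and position $j$ the starting point $y$ is determined by $w$, so the multiplicity of each $w$ is at most $\sum_{g\in B_S(2r)}\|g\|_S\le 2r\,|B_S(2r)|$. Combined with your variance bound this yields
\[
\|f-f_{B(x,r)}\|_{\ell^2(B(x,r))}\;\le\;2r\left(\frac{|B_S(2r)|}{|B_S(r)|}\right)^{1/2}\|\nabla f\|_{\ell^2(B(x,3r))},
\]
which is actually \emph{stronger} than the stated lemma (so you need not hit the constant on the nose).

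The paper, however, organizes the argument differently in a way that sidesteps the multiplicity count entirely. It first fixes $g\in B_S(2r)$ and takes the $\ell^2$-norm in $y\in B(x,r)$ \emph{before} telescoping, obtaining $\bigl(\sum_{y\in B(x,r)}|f(yg)-f(y)|^2\bigr)^{1/2}\le 2r\,\|\nabla f\|_{\ell^2(B(x,3r))}$ directly via the triangle (Minkowski) inequality along the word for $g$; then it sums over $g$ by Minkowski again (picking up the factor $|B_S(2r)|$), and finishes with the pointwise bound $|f(y)-f_{B(x,r)}|\le\frac{1}{|B_S(r)|}\sum_{g\in B_S(2r)}|f(yg)-f(y)|$. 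This trades your Cauchy--Schwarz-plus-counting for two applications of Minkowski, giving a cleaner proof at the cost of the slightly weaker constant $|B_S(2r)|/|B_S(r)|$ in place of its square root.
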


\begin{proof}  By definition of the gradient, we have the pointwise bound
$$ |f(ygs) - f(yg)| \leq |\nabla f(yg)|$$
for all $y,g \in G$ and $s \in S$.  If we take $g \in B_S(2r)$ and average this in $\ell^2$ over all $y \in B(x,r)$, we conclude that
$$ (\sum_{y \in B(x,r)} |f(ygs) - f(yg)|^2)^{1/2} \leq \| \nabla f \|_{\ell^2(B(x,3r))}.$$
Telescoping this using the triangle inequality, we conclude that
$$ (\sum_{y \in B(x,r)} |f(yg) - f(y)|^2)^{1/2} \leq 2r \| \nabla f \|_{\ell^2(B(x,3r))}.$$
for all $g \in B_S(2r)$.  Summing in $g$ using the triangle inequality, we conclude that
$$ (\sum_{y \in B(x,r)} (\sum_{g \in B_S(2r)} |f(yg) - f(y)|)^2)^{1/2} \leq 2r |B_S(2r)| \| \nabla f \|_{\ell^2(B(x,3r))}.$$
But for any $y \in B(x,r)$, we have
$$ |f(y) - f_{B(x,r)}(y)| \leq \frac{1}{|B_S(r)|} \sum_{z \in B(x,r)} |f(z)-f(y)|
\leq \frac{1}{|B_S(r)|} \sum_{g \in B_S(2r)} |f(yg)-f(y)|$$
and the claim follows.
\end{proof}

For $\eps$-harmonic functions, we have a reverse inequality:

\begin{lemma}[Reverse Poincar\'e inequality]\label{rev}  Let $f: G \to \R$ be $\eps$-harmonic Lipschitz, and let $B(x,r)$ be a ball for some $r \geq 1$.  Then
$$ \| \nabla f \|_{\ell^2(B(x,r))} \ll |S|^{O(1)} (\frac{1}{r} \|f\|_{\ell^2(B(x,2r))} + \eps r |B_S(2r)|^{1/2}).$$
\end{lemma}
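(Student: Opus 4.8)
The plan is to prove the reverse Poincaré inequality by a standard Caccioppoli-type argument: integrate against the Laplacian after multiplying by a suitable cutoff function, and exploit that $f$ is almost harmonic to control the resulting terms. First I would choose a Lipschitz cutoff $\chi: G \to [0,1]$ which equals $1$ on $B(x,r)$, is supported on $B(x,2r)$, and satisfies $|\nabla \chi| \ll |S|^{1/2}/r$ pointwise — this can be built by setting $\chi(y) := \max(0, \min(1, 2 - \dist(y,x)/r))$ or similar, with the $|S|^{1/2}$ loss coming from the fact that $|\nabla \chi(y)|$ is an $\ell^2$-sum over $s \in S$ of coordinate differences each of size $O(1/r)$. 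The key algebraic identity is the discrete integration by parts / summation by parts formula $\sum_y (\Delta f)(y) g(y) = \sum_y \nabla f(y) \cdot \nabla g(y)$ (valid for finitely supported test functions, and here legitimate because $\chi^2 f$ has finite support), which after plugging in $g = \chi^2 f$ and using the product rule $\nabla(\chi^2 f) = \chi^2 \nabla f + (\text{cross terms involving } \nabla \chi)$ yields an inequality of the shape
$$ \| \chi \nabla f \|_{\ell^2}^2 \ll |S|^{O(1)} \left( \| \nabla \chi \cdot f \|_{\ell^2} \| \chi \nabla f \|_{\ell^2} + \|\Delta f\|_{\ell^\infty} \|\chi^2 f\|_{\ell^1} \right). $$

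Next I would estimate the two terms on the right. The first term is handled by $\| \nabla \chi \cdot f \|_{\ell^2} \ll (|S|^{1/2}/r) \|f\|_{\ell^2(B(x,2r))}$ using the gradient bound on $\chi$, and then absorbed via the elementary inequality $ab \leq \frac12 a^2 + \frac12 b^2$ (more precisely $C a b \le \frac12 a^2 + \frac{C^2}{2} b^2$) to move $\|\chi \nabla f\|_{\ell^2}^2$ to the left-hand side. The second term uses $\|\Delta f\|_{\ell^\infty} \le \eps$ together with $\|\chi^2 f\|_{\ell^1} \le \|f\|_{\ell^1(B(x,2r))} \le |B_S(2r)|^{1/2} \|f\|_{\ell^2(B(x,2r))}$ by Cauchy–Schwarz; combined with an application of $ab \le \frac12 a^2 + \frac12 b^2$ again this contributes a term of size $\ll \eps |B_S(2r)|^{1/2}\|f\|_{\ell^2(B(x,2r))}$, which one splits (using $\|f\|_{\ell^2(B(x,2r))} \le r \cdot (\tfrac1r\|f\|_{\ell^2(B(x,2r))})$ and AM-GM) into a piece $\ll \tfrac1{r^2}\|f\|_{\ell^2(B(x,2r))}^2$ and a piece $\ll \eps^2 r^2 |B_S(2r)|$. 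Since $\| \nabla f\|_{\ell^2(B(x,r))} \le \|\chi \nabla f\|_{\ell^2}$, taking square roots and using $\sqrt{a+b} \le \sqrt a + \sqrt b$ gives exactly the claimed bound $\ll |S|^{O(1)}(\tfrac1r \|f\|_{\ell^2(B(x,2r))} + \eps r |B_S(2r)|^{1/2})$.

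One technical point that needs care is the exact radius bookkeeping: the support of $\chi$ and the error terms only involve $B(x,2r)$, but the summation-by-parts identity moves one generator step, so one should work with a slightly enlarged region (still inside $B(x,2r)$, or harmlessly $B(x,2r+1)$, which is absorbed into constants), and make sure the cutoff is genuinely $1$ on all of $B(x,r)$ including the one-step neighborhood needed for $\nabla f$ there. I would also verify the discrete Leibniz-type expansion of $\nabla(\chi^2 f)$ carefully, since in the non-abelian setting $\nabla u(y) = (u(ys)-u(y))_s$ and the product rule picks up a term like $\chi(ys)^2$ versus $\chi(y)^2$; writing $\chi(ys)^2 - \chi(y)^2 = (\chi(ys)-\chi(y))(\chi(ys)+\chi(y))$ and bounding $|\chi(ys)+\chi(y)| \le 2$ keeps everything under control.

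The main obstacle I expect is purely organizational rather than conceptual: tracking the powers of $|S|$ and the interplay of the various $\ell^1$–$\ell^2$ conversions and AM-GM splittings so that the final constant genuinely has the form $|S|^{O(1)}$ and the two stated error terms come out cleanly without cross-contamination. The underlying mechanism — discrete Caccioppoli inequality for almost-harmonic functions — is robust, and the almost-harmonicity enters only through the single crude bound $\|\Delta f\|_{\ell^\infty} \le \eps$, so there is no delicate estimate to fear, only careful bookkeeping.
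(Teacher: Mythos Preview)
Your approach is essentially identical to the paper's: a discrete Caccioppoli argument with a Lipschitz cutoff, integration by parts against $\chi^2 f$, and absorption of the cross term. The paper uses the cutoff $\psi(y) = \max(1 - \tfrac{\dist(x,y)}{2r},0)$ and arrives at the same structure.

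One small point your sketch underplays: when you expand the discrete Leibniz term and bound $|\chi(ys)+\chi(y)| \le 2$ (rather than $\le 2\chi(y) + |\chi(ys)-\chi(y)|$), the cross term produces a contribution of the shape $\tfrac{|S|}{r}\sum_{y \in B(x,2r-1)}|\nabla f(y)|^2$ (coming from the $f(ys) = f(y) + (f(ys)-f(y))$ split), which is \emph{not} of the form $\|\chi\nabla f\|_{\ell^2}^2$ and so cannot be absorbed directly into the left-hand side. The paper closes this with the trivial estimate $\|\nabla f\|_{\ell^2(B(x,2r-1))} \ll |S|\,\|f\|_{\ell^2(B(x,2r))}$ (just the triangle inequality on the definition of $\nabla$), which turns that residual term into $\tfrac{|S|^{O(1)}}{r^2}\|f\|_{\ell^2(B(x,2r))}^2$. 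You should add this one line; with it, your argument is complete and matches the paper's.
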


\begin{proof}  Let $\psi: G \to \R$ be the cutoff function $\psi(y) := \max( 1 - \frac{\dist(x,y)}{2r}, 0 )$.  Then it will suffice to show that
\begin{equation}\label{gnaf}
 \sum_{y \in G} |\nabla f|^2 \psi^2(y) \ll |S|^{O(1)} (\frac{1}{r^2} \|f\|_{\ell^2(B(x,2r))}^2 + \eps^2 r^2 |B_S(2r)|).
\end{equation}
We may clearly restrict the sum on the left to $B(x,2r-1)$.  Now, for any $y \in B(x,2r-1)$ and $s \in S$ we have
\begin{equation}\label{fsp}
 (f(ys) - f(y)) \psi^2(y) = (f \psi^2(ys) - f\psi^2(y)) - f(ys) \psi(y) (\psi(ys) - \psi(y)) - f(ys) (\psi(ys) - \psi(y))^2.
\end{equation}
From the triangle inequality, $\dist(x,ys)$ differs from $\dist(x,y)$ by at most $1$, and thus
$$ \psi(ys) - \psi(y) = O(1/r).$$
Also, $f(ys) = O( |f(y)| + |\nabla f(y)| )$.  Multiplying \eqref{fsp} by $f(ys)-f(y)$ and summing in $s$, we conclude that
$$ |\nabla f|^2 \psi^2(y) = \nabla(f \psi^2) \cdot \nabla f + O( |S| (|f(y)| + |\nabla f(y)|) (\frac{\psi(y)}{r} + \frac{1}{r^2}) ).$$
Inserting this into the left-hand side of \eqref{gnaf} and summing by parts, we conclude that
\begin{equation}\label{paris}
\begin{split}
&  \sum_{y \in G} |\nabla f|^2 \psi^2(y) \ll \sum_{y \in G} |f\psi^2(y)| |\Delta f(y)|
+ \frac{|S|}{r} \sum_{y \in G} |\nabla f(y)| (|f(y)| \\
&\quad  + |\nabla f(y)|) \psi(y)
+ \frac{|S|}{r^2} \sum_{y \in B(x,2r-1)} |\nabla f(y)|(|f(y)| + |\nabla f(y)|).
\end{split}
\end{equation}
As $f$ is $\eps$-harmonic, we can use Cauchy-Schwarz to bound
\begin{align*}
\sum_{y \in G} |f\psi^2(y)| |\Delta f(y)| &\leq \eps \sum_{y \in B_S(2r-1)} |f(y)| \\
&\leq \eps |B_S(2r)|^{1/2} \|f\|_{\ell^2(B_S(x,2r-1))} \\
&\leq \eps^2 r^2 |B_S(2r)| + \frac{1}{r^2} \|f\|_{\ell^2(B_S(x,2r-1))}^2.
\end{align*}
Another application of Cauchy-Schwarz gives
$$ \frac{|S|}{r} \sum_{y \in G} |\nabla f(y)| (|f(y)| + |\nabla f(y)|) \psi(y)
\leq c \sum_{y \in G} |\nabla f(y)|^2 \psi^2(y) + \frac{1}{c} \frac{|S|^2}{r^2} \sum_{y \in G} (|f(y)| + |\nabla f(y)|)^2$$
for any $c > 0$.  By choosing $c$ small enough, we can then absorb the first term on the right-hand side into the left-hand side of \eqref{paris} and conclude that
$$  \sum_{y \in G} |\nabla f|^2 \psi^2(y) \ll \eps^2 r^2 |B_S(2r)|
+ \frac{|S|^{O(1)}}{r^2} ( \|f\|_{\ell^2(B_S(x,2r-1))}^2 + \| \nabla f \|_{\ell^2(B_S(x,2r-1))}^2).$$
But from definition of $\nabla$ and the triangle inequality we see that
$$ \| \nabla f \|_{\ell^2(B_S(x,2r-1))} \ll |S| \| f \|_{\ell^2(B_S(x,2r))}$$
and the claim follows.
\end{proof}

Now we need some more definitions.  Given $R > 0$, define the symmetric bilinear form $Q_R: \R^G \times \R^G \to \R$ by the formula
\begin{equation}\label{qr}
 Q_R( u , v ) := \sum_{x \in B_S(R)} (u(x)-u(\id)) (v(x)-v(\id)).
\end{equation}
This form is clearly positive semi-definite (with finite rank).  Given any $u_1,\ldots,u_k: G \to \R$, we define the $R$-volume $\Vol_R(u_1,\ldots,u_k)$ by the formula
$$ \Vol_R(u_1,\ldots,u_k) = \det( (Q_R(u_i,u_j))_{1 \leq i,j \leq k} )^{1/2};$$
the right-hand side is non-negative due to the positive semi-definite nature of $Q_R$.
Geometrically, one can view $\Vol_R(u_1,\ldots,u_k)$ as the length of the wedge product $u_1 \wedge \ldots \wedge u_k$ in $\bigwedge^k \R^G$, using the induced semi-definite form from $Q_R$.  

The $Q_R$ are monotone increasing in $R$ as symmetric bilinear forms, which implies the monotonicity relationship
\begin{equation}\label{volmono}
 \Vol_R(u_1,\ldots,u_k) \leq \Vol_{4R}(u_1,\ldots,u_k).
\end{equation}

When $k$ is large, and the $u_1,\ldots,u_k$ are approximately harmonic, we can improve this inequality by applying Proposition \ref{rev} on large balls and Proposition \ref{poincare} on small balls.  More precisely, we have the following inequality (cf. \cite[Lemma 3.16]{kleiner}, \cite[Proposition 4.16]{cold}):

\begin{proposition}[Volume decrease]\label{codim}  Let $k \geq 1$ be an integer, $0 < \eps, \delta < 1$ and $R > 1/\delta$.  Let $u_1,\ldots,u_k: G \to \R$ be $\eps$-harmonic Lipschitz functions.  Suppose also that
\begin{equation}\label{kd}
 k \geq 2 \left(\frac{|B_S(2R)|}{|B_S(\delta R)|} + 1\right).
\end{equation}
Then we have
$$ \Vol_R(u_1,\ldots,u_k) \leq
O(|S|)^{O(k)} \left( \delta^{k/2}
\left(\frac{|B_S(7 \delta R)|}{|B_S(\delta R)|}\right)^{k/2}
\Vol_{4R}(u_1,\ldots,u_k)
+ k \eps^2 R^{k+2} |B_S(4R)|^{k/2}\right).
$$
\end{proposition}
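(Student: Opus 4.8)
The plan is to follow the scheme used by Colding--Minicozzi and Kleiner, adapted to the almost-harmonic, single-scale setting. The engine is a comparison between the bilinear form $Q_R$ on small balls and on the dilate $B_S(4R)$, obtained by chopping $B_S(R)$ into a bounded-overlap cover by balls of radius $\delta R$, running the Poincar\'e inequality (Lemma~\ref{poincare}) on each small ball, and running the reverse Poincar\'e inequality (Lemma~\ref{rev}) to convert the resulting gradient norms back into $L^2$ norms of the $u_i$ on slightly larger balls, ultimately controlled by $Q_{4R}$. The hypothesis \eqref{kd} is exactly what is needed to extract, by a pigeonholing/averaging argument over the cover, one small ball on which the $k$-dimensional volume is genuinely small compared to the ambient volume; the factor $2(|B_S(2R)|/|B_S(\delta R)|+1)$ bounds the multiplicity of the cover (up to the $+1$ for the basepoint adjustment), so if $k$ exceeds twice this multiplicity there must be a ball carrying less than ``its share'' of the volume.

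\textbf{Step 1: Set up the wedge-product formulation.} Interpret $\Vol_R(u_1,\dots,u_k)^2 = \det(Q_R(u_i,u_j))$ as $\|u_1\wedge\cdots\wedge u_k\|^2$ in $\bigwedge^k \R^G$ with respect to the Gram structure from $Q_R$. For a point $x\in G$ and radius $\rho$ write $Q_{B(x,\rho)}(u,v) := \sum_{y\in B(x,\rho)}(u(y)-u_{B(x,\rho)})(v(y)-v_{B(x,\rho)})$ using averages rather than the value at $\id$; one checks $Q_R \ll \sum_{x\in X} Q_{B(x,\delta R)} + O(|S|)^{O(1)}(\text{error})$ for a suitable finite set $X\subset B_S(R)$ of centers with $\{B(x,\delta R)\}$ covering $B_S(R)$ and having overlap at most $|B_S(2R)|/|B_S(\delta R)|$ — this is where the monotonicity \eqref{volmono} and the combinatorial packing argument enter. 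The subtracted averages differ from values at $\id$ by controllable amounts since the $u_i$ are Lipschitz with norm $\le 1$, contributing the $R^{k+2}$-type error terms.

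\textbf{Step 2: Volume superadditivity over the cover and pigeonhole.} Using the sub-additivity of $Q_R$ over the cover together with a Cauchy--Binet/Brunn--Minkowski-type inequality for determinants of sums of positive semidefinite forms, bound $\Vol_R(u_1,\dots,u_k)$ by a sum over $k$-element subsets of centers of products of the $\Vol_{B(x,\delta R)}$ factors; the multiplicity bound forces at least one center $x_0$ where $\Vol_{B(x_0,\delta R)}(u_1,\dots,u_k)$ is small relative to $\Vol_{4R}$, the gain being the $\delta^{k/2}(|B_S(7\delta R)|/|B_S(\delta R)|)^{k/2}$ factor once the Poincar\'e inequality (which loses $2(\delta R)\cdot|B_S(2\delta R)|/|B_S(\delta R)|$ per function, i.e.\ a $\delta^{1/2}$ relative gain against scale $R$) is applied on $B(x_0,\delta R)$ and the reverse Poincar\'e inequality is applied on $B(x_0,3\delta R)\subset B_S(4R)$ to reabsorb gradients. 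The $\eps$-harmonicity error terms from Lemma~\ref{rev} propagate linearly through each of the $k$ slots, producing the $k\eps^2 R^{k+2}|B_S(4R)|^{k/2}$ term.

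\textbf{The main obstacle} I anticipate is bookkeeping the many $|S|^{O(1)}$ and polynomial-in-$R$ losses while keeping the exponents in $k$ exactly of the form $O(|S|)^{O(k)}$ and $R^{k+2}$, and in particular arranging the cover so that its overlap is bounded by $|B_S(2R)|/|B_S(\delta R)|$ rather than something larger — a standard but delicate Vitali-type argument using only the doubling-free hypotheses available here. A secondary subtlety is that $Q_R$ is defined via the value at $\id$ rather than the mean, so the passage between $Q_{B(x,\delta R)}$-volumes (natural for Poincar\'e) and $Q_R$-volumes requires a rank-one perturbation analysis of the Gram determinant, handled by the Lipschitz bound $\|u_i\|_{\Lip}\le 1$ which makes the discrepancy $O(R)$ in each coordinate and hence polynomially bounded in the wedge.
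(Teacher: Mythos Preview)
Your proposal has a genuine gap: the mechanism you describe in Step~2 --- ``pigeonhole over centers to find one ball $x_0$ on which $\Vol_{B(x_0,\delta R)}(u_1,\dots,u_k)$ is small'' --- is not how the hypothesis~\eqref{kd} is used, and in fact such a pigeonhole does not control $\Vol_R$. Knowing that the $k$-volume is small on \emph{one} ball of the cover says nothing about the $k$-volume on the whole of $B_S(R)$; a Cauchy--Binet or Minkowski-type determinant inequality for $Q_R \le \sum_j Q_{B(x_j,\delta R)}$ goes in the wrong direction for an upper bound. The quantity $|B_S(2R)|/|B_S(\delta R)|$ in~\eqref{kd} is not the covering multiplicity --- it is an upper bound on the \emph{number} $m$ of centers in a maximal $2\delta R$-separated subset of $B_S(R)$ (via disjointness of the packed balls $B(x_j,\delta R)\subset B_S(2R)$). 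The covering multiplicity that eventually appears is the separate factor $|B_S(7\delta R)|/|B_S(\delta R)|$.

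The actual argument is a \emph{codimension} argument, not a pigeonhole over balls. Inside the span $V$ of $u_1,\dots,u_k$ one imposes the $m$ linear conditions ``mean zero on $B(x_j,2\delta R)$'' together with ``value zero at $\id$'', obtaining a subspace $V'$ of codimension at most $m+1$; hypothesis~\eqref{kd} is precisely $\dim V' \ge k/2$. One then takes a $Q_{4R}$-orthonormal basis $\tilde u_1,\dots,\tilde u_k$ of $V$ with $\tilde u_1,\dots,\tilde u_{\dim V'}\in V'$. For each $\tilde u_i\in V'$ the Poincar\'e inequality applies on \emph{every} small ball simultaneously, and summing (with the bounded overlap $|B_S(7\delta R)|/|B_S(\delta R)|$) followed by the reverse Poincar\'e on $B_S(2R)$ gives $Q_R(\tilde u_i)\ll |S|^{O(1)}\delta^2\,|B_S(7\delta R)|/|B_S(\delta R)|$; for the remaining $\tilde u_i$ one uses only $Q_R\le Q_{4R}=1$. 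Multiplying these $k$ diagonal bounds yields the ratio $\Vol_R/\Vol_{4R}$. Your proposal also does not address the real $\eps$-difficulty: the $\tilde u_i$ are linear combinations of the $u_j$ with coefficients that may be enormous, so one cannot directly apply Lemma~\ref{rev} to them. The paper handles this by assuming $v:=\Vol_{4R}(u_1,\dots,u_k)$ exceeds the $\eps$-error term (else done by~\eqref{volmono}) and using Cramer's rule to bound the coefficients by $O(v^{-1}(R|B_S(R)|^{1/2})^{k-1})$, so that each $\tilde u_i$ is a bounded multiple of an $\eps$-harmonic Lipschitz function.
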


\begin{proof} Let us write
\begin{equation}\label{vdef}
v := \Vol_{4R}(u_1,\ldots,u_k).
\end{equation}
We may of course assume that 
\begin{equation}\label{vbig}
v > k \eps^2 R^{k+2} |B_S(4R)|^{k/2},
\end{equation}
otherwise we are done by \eqref{volmono}.  In particular, this implies that $u_1,\ldots,u_k$ are linearly independent.  Let $V$ be the $k$-dimensional subspace of $\R^G$ spanned by the $u_1,\ldots,u_k$; the non-vanishing of $v$ implies that $V$ is a Hilbert space with respect to the bilinear form $Q_{4R}$.

We will now study a subspace $V'$ of $V$ consisting of ``locally mean zero'' functions. Let $x_1, \ldots, x_m$ be a maximal $2\delta R$-separated subset of $B_S(R)$.  Then the balls $B(x_j,\delta R)$ for $1 \leq j \leq m$ are disjoint and contained in $B(2R)$, so we have the upper bound
$$ m \leq \frac{|B_S(2R)|}{|B_S(\delta R)|}.$$
We then introduce the subspace 
$$ V' := \{ u \in V: u_{B(x_j,2\delta R)} = 0 \hbox{ for all } 1 \leq j \leq m; \quad u(0)=0 \}$$
where we recall that $u_B := \frac{1}{|B|} \sum_{y \in B} u(y)$ is the mean of $u$ on $B$.  Clearly $V'$ is a subspace of $V$ of codimension at most $m+1$; by \eqref{kd}, we conclude that 
\begin{equation}\label{dimv}
\dim(V') \geq k/2.
\end{equation}
Using the Gram-Schmidt process, we may then find an orthonormal (with respect to $Q_R$) basis $\tilde u_1,\ldots,\tilde u_k$ of $V$ such that $\tilde u_1, \ldots, \tilde u_{\dim(V')}$ lies in $V'$.

Let $1 \leq i \leq \dim(V')$.  From Lemma \ref{poincare} and the construction of $V'$ we have
\begin{equation}\label{bjr}
 \sum_{y \in B(x_j,2\delta R)} |\tilde u_i(y)|^2 \ll \delta^2 R^2 \sum_{y \in B(x_j,6\delta R)} |\nabla \tilde u_i(y)|^2 ).
\end{equation}
Now suppose a point $x$ is contained in $J$ balls $B(x_j,6\delta R)$.  Then the $J$ balls $B(x_j,\delta R)$ are contained in $B(x,7\delta R)$.  Since these balls are disjoint, we conclude that $J \leq |B_S(7\delta R)|/|B_S(\delta R)|$.  On the other hand, all the $B(x_j,6 \delta R)$ are contained in $B_S(2R)$.  We can therefore sum \eqref{bjr} in $j$ to conclude that
$$ \sum_{j=1}^m \sum_{y \in B(x_j,2\delta R)} |\tilde u_i(y)|^2 
\ll \delta^2 R^2 \frac{|B_S(7 \delta R)|}{|B_S(\delta R)|} \int_{y \in B_S(2R)} |\nabla \tilde u_i(y)|^2.$$
On the other hand, by construction of the $x_j$ we see that the balls $B(x_j,2\delta R)$ cover $B_S(R)$.  Since $\tilde u_i(0)=0$, we thus see from \eqref{qr} that
\begin{equation}\label{qrdel}
 Q_R(\tilde u_i) \ll \delta^2 R^2 \frac{|B_S(7 \delta R)|}{|B_S(\delta R)|} \sum_{y \in B_S(2R)} |\nabla \tilde u_i(y)|^2.
\end{equation}
To estimate this, we wish to use Lemma \ref{rev}, but first we must express $\tilde u_i$ as an approximately harmonic function.  As the $u_j$ have Lipschitz norm at most $1$, we see from \eqref{qr} that
\begin{equation}\label{qru}
 Q_R(u_j,u_j) \leq R^2 |B_S(R)|
 \end{equation}
for all $1 \leq j \leq k$.  Thus when represented in the orthonormal basis $\tilde u_1,\ldots,\tilde u_k$, the $u_1,\ldots,u_k$ are elements of $\R^k$ of norm at most $R |B_S(R)|^{1/2}$, whose coordinates form a $k \times k$ matrix whose determinant of magnitude $v$.  Taking adjugates (i.e. using Cramer's rule), we conclude that the inverse of this matrix has coefficients of size $O( v^{-1} (R |B_S(R)|^{1/2})^{k-1} )$.  In other words, the $\tilde u_1,\ldots,\tilde u_k$ are linear combinations of the $u_1,\ldots,u_k$ whose coefficients have size $O( v^{-1} (R |B_S(R)|^{1/2})^{k-1} )$.  In particular, each $\tilde u_i$ is equal to $O( k v^{-1} (R |B_S(R)|^{1/2})^{k-1} )$ times a $\eps$-harmonic Lipschitz function (note that the property of being $\eps$-harmonic Lipschitz is convex).  We may then apply Lemma \ref{rev} to conclude that
$$ \sum_{y \in B_S(2R)} |\nabla \tilde u_i(y)|^2 \ll |S|^{O(1)} (\frac{1}{R^2} \sum_{y \in B_S(4R)} |\tilde u_i(y)|^2 + k \eps^2 v^{-1} R^{k} |B_S(4R)|^{k/2}).$$
Since $\tilde u_i(0)=0$, and $\tilde u_i$ is a unit vector with respect to $Q_{4R}$, we see from \eqref{qr} that
$$\sum_{y \in B_S(4R)} |\tilde u_i(y)|^2 = 1.$$
Inserting these estimates and \eqref{vbig} into \eqref{qrdel} we obtain
$$
 Q_R(\tilde u_i) \ll |S|^{O(1)} \delta^2 \frac{|B_S(7 \delta R)|}{|B_S(\delta R)|}$$
for $1 \leq i \leq \dim(V')$.  Meanwhile, for $\dim(V') < i \leq m$ we have the crude bound
$$ Q_R(\tilde u_i) \leq Q_{4R}(\tilde u_i) = 1.$$
We conclude (using \eqref{dimv}) that
$$ \Vol_R(\tilde u_1,\ldots,\tilde u_k) \ll O(|S|)^{O(k)} \delta^{k/2}
(\frac{|B_S(7 \delta R)|}{|B_S(\delta R)|})^{k/2}.$$
On the other hand, by orthonormality we have $\Vol_{4R}(\tilde u_1,\ldots,\tilde u_k)$.  Observe that the ratio $\Vol_R(u_1,\ldots,u_k)/\Vol_{4R}(u_1,\ldots,u_k)$ is invariant under row operations, and therefore by \eqref{vdef} we have
$$ \Vol_R(u_1,\ldots,u_k) \ll O(|S|)^{O(k)} \delta^k
(\frac{|B_S(7 \delta R)|}{|B_S(\delta R)|})^k
\Vol_{4R}(u_1,\ldots,u_k)$$
and the claim follows.
\end{proof}

Now we iterate Proposition \ref{codim} to obtain

\begin{proposition}[Volume bound]\label{qkt2} Let $k \geq 1$ be an integer, $0 < \eps < 1$, $0 < \kappa < 0.1$, $d > \geq 1$, and $R_0 > 1$.  Assume that 
$$ k \geq C^{d/\kappa}$$
and
$$ R_0 \geq C^{1/\kappa} k^C$$
for some sufficiently large constant $C$.
Then for every $(R_0,d)$-growth group $(G,S)$, and every $\eps$-harmonic Lipschitz functions $u_1,\ldots,u_k: G \to \R$, we have
$$ \Vol_{R_0^{1-\kappa}}(u_1,\ldots,u_k) \ll
R_0^{O(k(d+\log |S|)) - c \frac{\kappa^2}{d+1} k \log k }
+ \eps^2 O( R_0 )^{O(kd)}
$$
for some absolute constant $c>0$.
\end{proposition}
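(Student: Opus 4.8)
The plan is to iterate Proposition~\ref{codim} across a dyadic (or rather, power-of-$4$) sequence of scales running from roughly $R_0^{1-\kappa}$ up to $R_0$, and track how the volume $\Vol$ at the bottom scale compares to the volume at the top scale, picking up a multiplicative gain at each step. First I would fix a choice of the small parameter $\delta$ in Proposition~\ref{codim} — taking $\delta$ to be a small absolute constant (independent of $R_0$) will work, since the $(R_0,d)$-growth hypothesis together with the constraint $R_0 \geq C^{1/\kappa}k^C$ lets us bound the local doubling-type ratios $|B_S(7\delta R)|/|B_S(\delta R)|$ and $|B_S(2R)|/|B_S(\delta R)|$ appearing in the hypothesis \eqref{kd} and in the conclusion. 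The key point is that, although we only assume polynomial growth at the single scale $R_0$, any chain of $m$ nested balls $B_S(r) \subset B_S(2r) \subset \cdots \subset B_S(2^m r)$ inside $B_S(R_0)$ must have $2^m \leq |B_S(R_0)| \leq R_0^d$ along the dissociation/pigeonhole lines already used in Lemmas~\ref{gen-red} and~\ref{zonk}; so on \emph{most} scales $r$ the doubling ratio $|B_S(2r)|/|B_S(r)|$ is at most, say, $R_0^{\kappa/(100 d)}$ or so, and more importantly there is a scale where \eqref{kd} holds once $k \geq C^{d/\kappa}$.

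Next I would set up the iteration itself. Let $R_j := 4^j R_0^{1-\kappa}$ for $j = 0, 1, \dots, J$ where $J \sim \kappa \log R_0 / \log 4$ is chosen so that $R_J \leq R_0$ (so all the balls involved, of radius up to $4R_J \leq R_0$, are controlled by the growth hypothesis). At each scale $R_j$ we want to apply Proposition~\ref{codim} with $R = R_j$, which requires \eqref{kd}: this is where we need the scales chosen so that $|B_S(2R_j)|/|B_S(\delta R_j)| \leq k/2 - 1$. This cannot hold at \emph{every} scale, but by a pigeonhole argument (counting: the product of the doubling ratios telescopes and is bounded by $R_0^d$), it holds at a positive proportion — in fact at all but $O(d/\log k) = O(d \cdot \kappa/(d\log C)) = O(\kappa)$ fraction — of the $J$ scales, provided $k \geq C^{d/\kappa}$ so that $\log k \gg d/\kappa$. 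On each "good" scale we gain a factor of $O(|S|)^{O(k)} \delta^{k/2}(|B_S(7\delta R_j)|/|B_S(\delta R_j)|)^{k/2}$ in passing from $\Vol_{R_j}$ to $\Vol_{4R_j} = \Vol_{R_{j+1}}$, plus an additive error term of the form $k\eps^2 R_j^{k+2}|B_S(4R_j)|^{k/2} \leq k\eps^2 R_0^{k+2} R_0^{dk/2}$. On "bad" scales we just use the trivial monotonicity \eqref{volmono}, losing nothing. Multiplying through the chain: $\Vol_{R_0^{1-\kappa}}(u_1,\dots,u_k) \leq \Vol_{R_J}(u_1,\dots,u_k) \cdot \prod_{j \text{ good}} \big(O(|S|)^{O(k)}\delta^{k/2}(\text{ratio}_j)^{k/2}\big)^{-1} + (\text{accumulated }\eps\text{ error})$; rearranged, $\Vol_{R_0^{1-\kappa}} \leq \Big(\prod_{j\text{ good}} O(|S|)^{O(k)}\delta^{k/2}(\text{ratio}_j)^{k/2}\Big)\Vol_{R_0} + \eps^2 O(R_0)^{O(kd)}$, since each of the $\leq J$ iterations contributes at most an $O(R_0)^{O(kd)}$-type factor to the error and $J \ll \log R_0$. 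Bounding $\Vol_{R_0}(u_1,\dots,u_k) \leq (R_0^2 |B_S(R_0)|)^{k/2} \leq R_0^{O(kd)}$ via \eqref{qru}, and controlling $\prod_{j\text{ good}}(\text{ratio}_j)^{k/2}$: the product of \emph{all} the doubling-type ratios over a chain inside $B_S(R_0)$ is at most $R_0^{d} = \exp(O(d\log R_0))$, so $\prod_j (\text{ratio}_j)^{k/2} \leq \exp(O(dk\log R_0/2)) = R_0^{O(dk)}$ — this goes into the first, polynomially-bounded term — while the number of good scales is $\gg \kappa \log R_0 / \log 4 \cdot (1 - O(\kappa))\gg \kappa\log R_0$, so $\prod_{j\text{ good}}\delta^{k/2} \leq \delta^{ck\kappa\log R_0} = R_0^{-ck\kappa\log(1/\delta)}$. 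But this last bound only gives a linear-in-$k$ gain in the exponent, $-ck\kappa\log R_0$, whereas the claimed bound has the stronger $-c\frac{\kappa^2}{d+1}k\log k$.

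The extra $\log k$ — rather than merely a constant — in the exponent of $R_0$ is the crux, and this is the main obstacle. To get it, I would not use a single value of $\delta$, but rather choose $\delta$ as small as the growth hypothesis permits \emph{relative to the budget of scales available}: with $J \sim \kappa\log R_0$ scales, each bad scale "wasted" forces $|B_S(2R_j)|/|B_S(\delta R_j)| > k/2$, and the number of bad scales between the bottom and top is constrained by $\prod(\text{all doubling ratios}) \leq R_0^d$, i.e. at most $O(d\log R_0/\log k)$ bad scales. Thus the number of good scales is at least $J - O(d\log R_0/\log k) \geq c\kappa\log R_0$ once $\log k \gg d/\kappa$. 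Now the refinement: on a good scale we can actually run Proposition~\ref{codim} with $\delta$ chosen so that $(\delta R_j)$ is itself a scale where $|B_S(2\delta R_j)|/|B_S(\delta R_j)|$ is small — at such scales the ratio $|B_S(7\delta R_j)|/|B_S(\delta R_j)| = O(1)^{d}$ and more importantly we are free to take $\delta$ as small as $k^{-c'/ (d+1)}$ (any smaller and \eqref{kd} could fail), because shrinking $\delta$ by a factor forces the doubling ratio up by at most a bounded power and $k \geq C^{d/\kappa}$ gives us room for $\log(1/\delta)$ up to $\sim \frac{1}{d+1}\log k$. Feeding $\log(1/\delta) \gg \frac{1}{d+1}\log k$ into the gain $\delta^{k/2}$ per good scale, over $\gg \kappa\log R_0$ good scales, yields the total gain $\delta^{ck\kappa\log R_0} = \exp(-c k \kappa \log R_0 \cdot \frac{1}{d+1}\log k) = R_0^{-c\frac{\kappa^2}{d+1}k\log k}$ after absorbing one more factor of $\kappa$ from the relation between $J$ and $\log R_0$ — wait, more carefully: we get $R_0^{-c'k\kappa\frac{\log k}{d+1}}$, and then the statement's $\kappa^2$ versus $\kappa$ is accounted for by the conservative bookkeeping of how $R_j$ ranges only over $[R_0^{1-\kappa}, R_0]$ together with losses in identifying good scales, so it suffices to prove the weaker-exponent version which is all that is claimed. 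The polynomial error terms $R_0^{O(k(d+\log|S|))}$ collect: (a) the $\Vol_{R_0} \leq R_0^{O(kd)}$ top-scale bound, (b) the product of all doubling ratios $R_0^{O(dk)}$, and (c) the $O(|S|)^{O(k)}$ factors compounded over $\ll \log R_0$ scales, giving $|S|^{O(k\log R_0)} = R_0^{O(k\log|S|)}$; and the $\eps$-error is $\eps^2 O(R_0)^{O(kd)}$ exactly as claimed since it is dominated by $J \cdot k\eps^2 R_0^{k+2}R_0^{dk/2}$ times the accumulated multiplicative constants.
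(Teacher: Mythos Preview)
Your overall strategy --- iterate Proposition~\ref{codim} across the scales $R_j = 4^j R_0^{1-\kappa}$ for $0 \le j \le J \sim \kappa \log R_0$, use pigeonhole to find a positive proportion of good scales where \eqref{kd} holds, apply monotonicity \eqref{volmono} at the remaining scales, bound $\Vol_{R_0}$ via \eqref{qru}, and choose $\delta$ as a negative power of $k$ --- is exactly what the paper does. There is, however, one concrete slip in your bookkeeping that propagates and accounts for the $\kappa$ versus $\kappa^2$ discrepancy you noticed.

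You assert that $\prod_j |B_S(2R_j)|/|B_S(\delta R_j)| \le R_0^d$ by telescoping, and hence that there are at most $O(d\log R_0/\log k)$ bad scales. This is not right once $\delta$ is small: the scale windows $[\delta R_j, 2R_j]$ for consecutive $j$ overlap heavily --- each fixed dyadic scale $r$ lies in roughly $\log(1/\delta)$ of them --- so the product is only bounded by $R_0^{O(d\log(1/\delta))}$, not $R_0^d$. With this correction the number of bad scales is $O\bigl(d\log(1/\delta)\log R_0/\log k\bigr)$, and for this to be at most $J/2 \sim \tfrac{\kappa}{2}\log R_0$ one is forced to take $\log(1/\delta) \ll \tfrac{\kappa}{d}\log k$, i.e.\ $\delta \sim k^{-c\kappa/d}$ rather than your $k^{-c/(d+1)}$. (The paper fixes $\delta$ from the outset by solving $k = C^{(d/\kappa)\log(1/\delta)}$, which is exactly this choice.) Feeding this corrected $\delta$ into the gain $\delta^{ckJ}$ over $\gg \kappa\log R_0$ good scales yields $R_0^{-c\kappa^2 k\log k/d}$: one factor of $\kappa$ from the number of good scales and a second from $\log(1/\delta)$. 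So the $\kappa^2$ is real, not conservative bookkeeping. A minor further point: the paper disposes of the $\eps$-error by a case split (assume $\Vol_{R_0^{1-\kappa}} > 2k\eps^2 R_0^{k+2+dk/2}$, otherwise done immediately), which is cleaner than your accumulation of additive errors through the chain, since those errors would otherwise pick up the multiplicative gains from later steps.
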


\begin{proof}  We choose $\delta$ by the solving the equation
\begin{equation}\label{k-assum}
k = C^{\frac{d}{\kappa} \log \frac{1}{\delta}}
\end{equation}
for some large absolute constant $C$, thus
\begin{equation}\label{delta-def}
 \delta = k^{-\frac{\kappa}{d \log C}}.
\end{equation}
From our assumptions on $k,R_0$ we see (for sufficiently large choices of constants) that
\begin{equation}\label{delta-assum}
R_0^{-0.1} < \delta < 0.1.
\end{equation}
Let $n$ be the largest integer such that $4^N R_0^{1-\kappa} \leq R_0$, thus 
\begin{equation}\label{nsig}
N \gg \kappa \log R_0
\end{equation}
(note from hypothesis that $\kappa \log R_0 > 1$).  Set $R_n := 4^n R_0^{1-\kappa}$ for $1 \leq n \leq N$.  By telescoping series and \eqref{bsrd} we see that
$$ \prod_{n=1}^N \frac{|B_S(2R_n)|}{|B_S(\delta R_n)|} \ll R_0^{O( d \log \frac{1}{\delta} )} \ll O(1)^{\frac{d N}{\kappa} \log \frac{1}{\delta}}.$$
We thus conclude that for at least $N/2$ values of $1 \leq n \leq N$, we have
$$ \frac{|B_S(2R_n)|}{|B_S(\delta R_n)|} \leq O(1)^{\frac{d}{\kappa} \log \frac{1}{\delta}}.$$
By \eqref{k-assum}, we thus have (for $C$ large enough) that
$$ k \geq 2(\frac{|B_S(2R_n)|}{|B_S(\delta R_n)|} + 1)$$
for at least $N/2$ values of $n$.  For each such $n$, we may apply Proposition \ref{codim} and conclude that
$$ \Vol_{R_n}(u_1,\ldots,u_k) \leq
O(|S|)^{O(k)} \delta^{k/2}
(\frac{|B_S(7 \delta R_n)|}{|B_S(\delta R_n)|})^{k/2}
\Vol_{R_{n+1}}(u_1,\ldots,u_k)
+ k \eps^2 R_n^{k+2} |B_S(4R_n)|^{k/2}.
$$
We may make the assumption
\begin{equation}\label{vol-assum}
\Vol_{R_0^{1-\kappa}}(u_1,\ldots,u_k) > 2 k \eps^2 R_0^{k+2+dk/2}
\end{equation}
since we are done otherwise. Then (by \eqref{bsrd} and monotonicity of volume) we have
$$ \Vol_{R_n}(u_1,\ldots,u_k) >
2 k \eps^2 R_n^{k+2} |B_S(2R_n)|^{k/2}
$$
and thus
$$ \Vol_{R_n}(u_1,\ldots,u_k) \leq
O(|S|)^{O(k)} \delta^{k/2}
(\frac{|B_S(7 \delta R_n)|}{|B_S(\delta R_n)|})^{k/2}
\Vol_{R_{n+1}}(u_1,\ldots,u_k)
$$
for at least $N/2$ values of $n$.  For the other values of $n$, we see from \eqref{volmono} that
$$ \Vol_{R_n}(u_1,\ldots,u_k) \leq
\Vol_{R_{n+1}}(u_1,\ldots,u_k).
$$
Putting this all together and using monotonicity of volume again, we conclude that
$$ \Vol_{R_0^{1-\kappa}}( u_1, \ldots, u_k ) \leq O(1)^{k^2 N} |S|^{kN} \delta^{kN/4} 
(\prod_{n=0}^N \frac{|B_S(7 \delta R_n)|}{|B_S(\delta R_n)|})^{k/2}
\Vol_{R_0}(u_1,\ldots,u_k).$$
On the other hand, by telescoping series and \eqref{bsrd} we have
$$ \prod_{n=0}^N \frac{|B_S(7 \delta R_n)|}{|B_S(\delta R_n)|} \leq R_0^{O(d)}.$$
Also, arguing as in the proof \eqref{qru} we have
$$ Q_{R_0}(u_i,u_i) \leq R_0^2 |B_S(R)| \leq R_0^{d+2}$$
and thus
$$ \Vol_{R_0}(u_1,\ldots,u_k) \leq R_0^{O(kd)}.$$
We conclude that
$$ \Vol_{R_0^{1-\kappa}}( u_1, \ldots, u_k ) \leq O(|S|)^{kN} \delta^{kN/4} R_0^{O(kd)}.$$
Substituting in \eqref{delta-def}, \eqref{nsig} we obtain the claim.
\end{proof}

In practice, we will only use this proposition in the regime where $d, 1/\kappa, |S|$ are bounded, $k$ is sufficiently large depending on these parameters (but independent of $R_0$), and $\eps$ is less than extremely large negative power of $R_0$.  More precisely, we will use the following corollary of Proposition \ref{qkt2}:

\begin{corollary}[Volume bound]\label{qkt3}  Let $0 < \kappa < 0.1$, $d > 0$, $R_0>1$, and let $(G,S)$ be a $(R_0,d)$-growth group.  Suppose that 
$$ k \geq (C|S|)^{Cd^3/\kappa^2}$$
and
$$ R_0 \geq k^C$$
for some sufficiently large absolute constant $C$.  Then for any $R_0^{-Ckd}$-harmonic Lipschitz functions $u_1,\ldots,u_k: G \to \R$, we have
$$ \Vol_{R_0^{1-\kappa}}(u_1,\ldots,u_k) \leq R_0^{-100 k d}.$$
\end{corollary}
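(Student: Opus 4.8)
The plan is to deduce Corollary~\ref{qkt3} from Proposition~\ref{qkt2} simply by choosing the parameters so that the polynomially-growing ``error'' term from approximate harmonicity is negligible, and the negative power of $R_0$ in the main term is large enough to beat the positive one. Concretely, I would set $\eps := R_0^{-Ckd}$ (this is the $\eps$-harmonicity hypothesis we are granted, for a suitably large absolute constant $C$ to be chosen at the end), and apply Proposition~\ref{qkt2} with this $\eps$, the same $\kappa$, $d$, and $k$. The hypotheses of Proposition~\ref{qkt2} are $k \geq C_0^{d/\kappa}$ and $R_0 \geq C_0^{1/\kappa} k^{C_0}$ for its absolute constant $C_0$; both follow from our assumptions $k \geq (C|S|)^{Cd^3/\kappa^2}$ and $R_0 \geq k^C$ once $C$ is chosen large compared to $C_0$ (note $|S| \geq 1$, $d \geq 1$, $1/\kappa > 10$, so $(C|S|)^{Cd^3/\kappa^2} \geq C^{d/\kappa} \geq C_0^{d/\kappa}$, and $R_0 \geq k^C \geq C_0^{1/\kappa} k^{C_0}$).

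The conclusion of Proposition~\ref{qkt2} then gives
$$ \Vol_{R_0^{1-\kappa}}(u_1,\ldots,u_k) \ll R_0^{O(k(d+\log|S|)) - c\frac{\kappa^2}{d+1} k \log k} + \eps^2 O(R_0)^{O(kd)}.$$
For the second term, since $\eps = R_0^{-Ckd}$ we have $\eps^2 O(R_0)^{O(kd)} = R_0^{-2Ckd + O(kd)} \leq R_0^{-100kd}$, comfortably, provided $C$ is large enough relative to the implied constant in the exponent $O(kd)$. For the first term, the key point is that $k \geq (C|S|)^{Cd^3/\kappa^2}$ forces $\log k \geq C \frac{d^3}{\kappa^2} \log(C|S|) \gg \frac{d^3}{\kappa^2}(d + \log|S|)$, so the negative contribution $c \frac{\kappa^2}{d+1} k \log k$ dominates the positive contribution $O(k(d + \log|S|))$ by a factor of order $\frac{d^3}{\kappa^2}\cdot\frac{\kappa^2}{d+1} \gtrsim d^2 \geq 1$, with plenty of room to spare; more carefully, $c\frac{\kappa^2}{d+1} k\log k - O(k(d+\log|S|)) \geq c' k (d + \log|S|) \cdot d^2 \geq 100 k d$ once $C$ is chosen large enough (here I use $\log k \gg \frac{1}{c}\cdot\frac{d+1}{\kappa^2}(d+\log|S|+100d)$, which is exactly what $k \geq (C|S|)^{Cd^3/\kappa^2}$ buys us with $C$ large). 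Hence the first term is also $\leq R_0^{-100kd}$, and summing the two bounds (and absorbing the factor $2$ into the exponent, using $R_0 > 1$) gives the claim.

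There is essentially no hard step here: the content is entirely in the bookkeeping of how large the absolute constant $C$ in the statement of the corollary must be chosen relative to the absolute constants $c, C_0$ and the various implied constants appearing in Proposition~\ref{qkt2}. The mild subtlety to watch is that $C$ appears in three distinct roles in the corollary (in the lower bound for $k$, in the lower bound for $R_0$, and in the harmonicity exponent $R_0^{-Ckd}$), and one must verify that a single choice of $C$ — taken large enough to dominate all the constants from Proposition~\ref{qkt2} simultaneously — makes all three estimates go through; this is routine since each estimate only requires $C$ to exceed some fixed absolute threshold. One should also remark (as the paragraph before the corollary already signals) that all implied constants here are genuinely absolute and in particular independent of $R_0$, which is what allows the argument to be uniform in $R_0$ once $R_0 \geq k^C$.
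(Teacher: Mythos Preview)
Your proposal is correct and is exactly the approach the paper takes: the paper states Corollary~\ref{qkt3} as an immediate consequence of Proposition~\ref{qkt2} without further proof, and your parameter-chasing argument is precisely the intended verification. The only minor wrinkle is that the corollary is stated for $d>0$ while your argument (and Proposition~\ref{qkt2}) implicitly uses $d \geq 1$; this is harmless since in all applications $d \geq 1$, and the discrepancy is almost certainly a typo in the paper.
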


Now we can prove Theorem \ref{qkt}.  We first observe that it suffices to prove Theorem \ref{qkt} for $R_0^{-K}$-harmonic Lipschitz functions which vanish at the identity, since the general case can then be handled by adding the constant functions to $V$ (increasing the dimension by one).  By Corollary \ref{qkt3} and the hypothesis on $R_0$, we can find (if $C$ is large enough) a threshold $k_0 = O(|S|)^{O((1+d)^3/\kappa^2)}$ such that $\Vol_{R_0^{1-\kappa}}(u_1,\ldots,u_{k_0}) \leq R_0^{-100 k_0 d}$ for all $R_0^{-K}$-harmonic Lipschitz functions $u_1,\ldots,u_{k_0}: G \to \R$.  Using the greedy algorithm, one may then find $R_0^{-K}$-harmonic Lipschitz functions $u_1,\ldots,u_k: G \to \R$ for some $0 \leq k < k_0$ such that
$$ \Vol_{R_0^{1-\kappa}}(u_1,\ldots,u_k, u) \leq R_0^{-100 d} \Vol_{R_0^{1-\kappa}}(u_1,\ldots,u_k)$$
for all $R_0^{-K}$-harmonic Lipschitz functions $u: G \to \R$.  But if we let $V$ be the space spanned by $u_1,\ldots,u_k$, then from the base times height formula we have
$$ \Vol_{R_0^{1-\kappa}}(u_1,\ldots,u_k, u) = \dist_{Q_{R_0^{1-\kappa}}}( u, V ) \Vol_{R_0^{1-\kappa}}(u_1,\ldots,u_k)$$
and the claim follows.

\section{Third step for Proposition \ref{trivcor2}: Establishing an approximate isometric representation}\label{kleinersec-3}

The group $G$ acts on the space $\R^G$ of functions $u: G \to \R$ by left translation, thus
$$ \rho(g)(u)(x) := u(g^{-1} x)$$
for all $u \in \R^G$, $g \in G$, $x \in G$.  Observe that this action preserves the vector space $V$ of Lipschitz harmonic functions.  In view of Kleiner's theorem, and assuming $G$ has polynomial growth, this gives a finite-dimensional linear representation of $G$; on the quotient space $V/\R$ of $V$ modulo the constant functions, the Lipschitz semi-norm becomes a norm, which is preserved by the group action.  

The purpose of this section is to establish an analogous claim for $(R_0,d)$ groups rather than groups of polynomial growth.  It is convenient to work modulo the constants.  Let $\R^G/\R$ be the space of functions from $G$ to $\R$ modulo addition by constants, and let $\pi: \R^G \to \R^G/\R$ be the quotient map.  Observe that the action $\rho$ of $G$ on $\R^G$ descends to an action $\overline{\rho}$ on $\R^G/\R$.  One can also meaningfully define the concept of a $\eps$-harmonic Lipschitz function $\overline{u}$ in $\R^G/\R$, since this concept is invariant under addition by constants.  We can also define induced $\overline{\ell^p(B)}$ norms for every finite $B \subset G$ by
$$ \| \overline{u} \|_{\overline{\ell^p(B)}} := \inf \{ \|u\|_{\ell^p(B)}: \pi(u) = \overline{u} \}$$
for any $\overline{u} \in \R^G/\R$.  For $p=2$, this norm is associated with an inner product
$$ \langle \overline{u}, \overline{v} \rangle_{\overline{\ell^2(B)}} = \sum_{y \in B} u(y) v(y)$$
where $u,v$ are the unique lifts of $\overline{u}, \overline{v}$ by $\pi$ that have mean zero on $B$.

We will need a quotiented variant of Theorem \ref{qkt} with an additional ``good scale'' $R_1$ which is stable under translations.

\begin{proposition}[Quantitative Kleiner theorem with good scale]\label{bkts}
Let $0 < \kappa < 0.1$, and let $(G,S)$ be a $(R_0,d)$-growth group with $d \geq 1$ obeying 
\begin{equation}\label{rbit2}
 R_0 \geq K := (C|S|)^{C d^3/\kappa^3}
\end{equation}
for some sufficiently large $C$.  Then there exists a finite-dimensional subspace $\overline{V}$ of $\R^G/\R$ of dimension
$$ \dim(\overline{V}) \leq O(|S|)^{O(d^3/\kappa^2)}$$
and a scale $R_0^{1-2\kappa} \leq R_1 \leq R_0^{1-\kappa}$ with two properties:
\begin{itemize}
\item Every $R_0^{-K}$-harmonic Lipschitz function $\overline{u} \in \R^G/\R$ lies at a distance at most $R_0^{-99d}$ from $\overline{V}$ in the $\overline{\ell^2(B_S(R_1))}$ norm.
\item For every $\overline{u} \in \overline{V}$, we have
\begin{equation}\label{uinv}
 \| \overline{u} \|_{\ell^2(B_S(R_1))} \leq (1 + R_0^{-\kappa}) \| \overline{u} \|_{\ell^2(B_S(R_1-R_0^{1-4\kappa}))}. 
\end{equation}
\end{itemize}
\end{proposition}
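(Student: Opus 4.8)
The plan is to take $\overline{V}$ to be the image under $\pi$ of the space furnished by Theorem \ref{qkt}, and to extract the stable scale $R_1$ by a pigeonhole over scales applied to the determinant of the natural one-parameter family of positive semi-definite forms on $\overline{V}$.

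First I would apply Theorem \ref{qkt} with the same $\kappa$: since $\kappa<1$, the hypothesis $R_0 \geq (C|S|)^{Cd^3/\kappa^3}$ of \eqref{rbit2} is stronger than the hypothesis $R_0 \geq (C|S|)^{Cd^3/\kappa^2}$ needed there, so one obtains a subspace $V\subset\R^G$ with $\dim V = O(|S|)^{O(d^3/\kappa^2)}$ such that every $R_0^{-K}$-harmonic Lipschitz $u$ lies within $R_0^{-100d}$ of $V$ in $\ell^2(B_S(R_0^{1-\kappa}))$. Put $\overline{V}:=\pi(V)$, so $\dim\overline{V}\leq\dim V$. Because $\|\pi(u)-\pi(v)\|_{\overline{\ell^2(B)}}\leq\|u-v\|_{\ell^2(B)}$ and $B_S(R_1)\subseteq B_S(R_0^{1-\kappa})$ for every $R_1\leq R_0^{1-\kappa}$, the first bullet of the proposition then holds automatically (indeed with $R_0^{-100d}$ in place of $R_0^{-99d}$) as soon as $R_1$ is chosen in the stated range; throughout one works modulo constants, normalising almost-harmonic representatives to vanish at $\id$ when convenient.

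To produce $R_1$, let $A_r$ be the Gram matrix, in a fixed basis of $\overline{V}$, of the bilinear form $\langle\cdot,\cdot\rangle_{\overline{\ell^2(B_S(r))}}$; these forms are monotone non-decreasing in $r$. Since $V$ arises from the volume/greedy construction in the proof of Theorem \ref{qkt} carried out at scale $R_0^{1-\kappa}$ — whose final tuple of almost-harmonic functions spans a genuinely full-dimensional space with respect to $\ell^2(B_S(R_0^{1-\kappa}))$, by Proposition \ref{infilip} together with the greedy criterion — the form $A_{R_0^{1-\kappa}}$ is positive definite on $\overline{V}$, and normalising the basis to be $\overline{\ell^2(B_S(R_0^{1-\kappa}))}$-orthonormal we get $\det A_{R_0^{1-\kappa}}=1$, hence $\det A_r\leq 1$ for all $r\leq R_0^{1-\kappa}$. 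Granting the lower bound $\det A_{R_0^{1-2\kappa}}\geq R_0^{-O(d\dim\overline{V})}$ (the crux, discussed below), partition $[R_0^{1-2\kappa},R_0^{1-\kappa}]$ into $N$ adjacent windows of width $R_0^{1-4\kappa}$ with $N\gg R_0^{2\kappa}$; the product over these windows of $\det A_{\mathrm{top}}/\det A_{\mathrm{bottom}}$ telescopes to $\det A_{R_0^{1-\kappa}}/\det A_{R_0^{1-2\kappa}}\leq R_0^{O(d\dim\overline{V})}$, so some window $[R_1-R_0^{1-4\kappa},R_1]$ satisfies $\det A_{R_1}\leq(1+R_0^{-\kappa})\det A_{R_1-R_0^{1-4\kappa}}$ — here one uses $O(d\dim\overline{V})\log R_0\leq R_0^{2\kappa}$, valid for $C$ large since $\dim\overline{V}=O(|S|)^{O(d^3/\kappa^2)}$ and $R_0\geq(C|S|)^{Cd^3/\kappa^3}$. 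Finally, the elementary fact that positive definite symmetric $A\leq B$ with $\det B\leq(1+\eta)\det A$ forces $B\leq(1+\eta)A$ (diagonalise $B$ against $A$; the eigenvalues are $\geq1$ and multiply to $\leq 1+\eta$, so each is $\leq 1+\eta$) turns $\det A_{R_1}\leq(1+R_0^{-\kappa})\det A_{R_1-R_0^{1-4\kappa}}$ into $A_{R_1}\leq(1+R_0^{-\kappa})A_{R_1-R_0^{1-4\kappa}}$, which on taking square roots is exactly \eqref{uinv} for all $\overline{u}\in\overline{V}$. One readily checks that $R_0^{1-2\kappa}\leq R_1\leq R_0^{1-\kappa}$ and $R_1-R_0^{1-4\kappa}\geq R_0^{1-2\kappa}$.

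The step I expect to be the main obstacle is precisely the lower bound $\det A_{R_0^{1-2\kappa}}\geq R_0^{-O(d\dim\overline{V})}$ — equivalently, a reverse-doubling estimate asserting that an element of $\overline{V}$ cannot hide almost all of its $\overline{\ell^2(B_S(R_0^{1-\kappa}))}$-mass in the thin shell $B_S(R_0^{1-\kappa})\setminus B_S(R_0^{1-2\kappa})$, say $\|\overline{f}\|_{\overline{\ell^2(B_S(R_0^{1-\kappa}))}}\leq R_0^{O(d)}\|\overline{f}\|_{\overline{\ell^2(B_S(R_0^{1-2\kappa}))}}$ for $\overline{f}$ the image of an almost-harmonic Lipschitz function. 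This is exactly the difficulty advertised in the introduction, that at such a scale one lacks an a priori lower bound on the determinant of the relevant quadratic form; its content is the non-concentration of almost-harmonic functions in thin shells, and this is where the finite-dimensionality of $\overline{V}$ must be used crucially, in combination with the polynomial growth bound \eqref{bsrd} and the local estimates already developed — the Poincar\'e inequality of Lemma \ref{poincare}, the reverse Poincar\'e inequality of Lemma \ref{rev}, and the volume-decrease iteration behind Proposition \ref{codim} — together with the passage from a general element of $\overline{V}$ to an honest $R_0^{-K}$-harmonic Lipschitz function via Cramer's rule as in the proof of Proposition \ref{codim}, which costs essentially nothing since $R_0^{-K}$ is an enormous negative power of $R_0$.
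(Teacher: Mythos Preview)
Your pigeonhole-on-determinants step is exactly what the paper uses to pass from a crude doubling estimate to the sharp inequality \eqref{uinv}; the diagonalisation argument you give is the same. The genuine gap is precisely the one you flag: the lower bound $\det A_{R_0^{1-2\kappa}}\geq R_0^{-O(d\dim\overline{V})}$, and the tools you list do not supply it. An arbitrary element of $\overline{V}$ is a linear combination of almost-harmonic Lipschitz functions with \emph{no control on the coefficients}, so Lemma \ref{rev} and Proposition \ref{codim} do not apply to it; and the Cramer's-rule passage you invoke needs exactly the determinant lower bound you are trying to prove, so that route is circular. There is no a priori reason the image space $\pi(V)$ could not contain a direction whose mass concentrates in the shell $B_S(R_0^{1-\kappa})\setminus B_S(R_0^{1-2\kappa})$.

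The paper does not attempt to prove this lower bound for $\pi(V)$. Instead it \emph{discards} any offending direction and shrinks the scale, iterating until no such direction remains. Concretely: start with $\overline{V}_0=\pi(V)$ and $R_1=R_0^{1-\kappa}$; if some unit vector $\overline{u}_0\in\overline{V}_0$ has $\|\overline{u}_0\|_{\overline{\ell^2(B_S(R_1/2))}}<R_0^{-200d}$, pass to its orthogonal complement $\overline{V}_1$ and replace $R_1$ by $R_1/2$. The crucial point is that the approximation property survives: any almost-harmonic Lipschitz $\overline{u}$ has $\|\overline{u}\|_{\overline{\ell^2(B_S(R_1))}}\leq R_0^{1+d/2}$, so its approximant $\overline{v}\in\overline{V}_0$ satisfies $|\langle\overline{v},\overline{u}_0\rangle|\ll R_0^{1+d/2}$, and hence the $\overline{u}_0$-component of $\overline{v}$ has $\overline{\ell^2(B_S(R_1/2))}$-norm at most $R_0^{1+d/2}\cdot R_0^{-200d}\leq R_0^{-150d}$, which is negligible. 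After at most $\dim\overline{V}_0$ halvings one lands on a subspace $\overline{V}$ and a scale $R_1\geq R_0^{1-2\kappa}$ on which the weak doubling $\|\overline{u}\|_{\overline{\ell^2(B_S(R_1/2))}}\geq R_0^{-200d}\|\overline{u}\|_{\overline{\ell^2(B_S(R_1))}}$ holds for all $\overline{u}\in\overline{V}$; \emph{this} supplies the determinant ratio bound $\det Q_{R_1}/\det Q_{R_1/2}\leq R_0^{400d\dim\overline{V}}$, after which your pigeonhole argument finishes the job. The missing idea, then, is dimension reduction rather than a direct estimate.
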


\begin{proof}   We first make the observation that we may replace \eqref{uinv} by the variant condition
\begin{equation}\label{uinv-2}
\| \overline{u} \|_{\overline{\ell^2(B_S(R_1/2))}} \geq R_0^{-200d} \| \overline{u} \|_{\overline{\ell^2(B_S(R_1))}}
\end{equation}
at the slight cost of tightening the lower bound $R_1 \geq R_0^{1-2\kappa}$ to $R_1 \geq 2 R_0^{1-2\kappa}$.  Indeed, suppose that \eqref{uinv-2} held for all $\overline{u} \in \overline{V}$.  Then, if we let $Q_r: V \to \R$ denote the positive definite quadratic form $Q_r(\overline{u}) := \| \overline{u} \|_{\overline{\ell^2(B_S(r))}}^2$ for each $r>0$, we see that $\det Q_{R_1} / \det Q_{R_1/2} \leq (R_0^{200d})^{2\dim(\overline{V})}$ (where we pick some arbitrary fixed basis of $V$ with which to compute determinants).  As the $Q_r$ are increasing in $r$, we may then use the pigeonhole principle (and the bounds on $R_0$ and $\dim(\overline{V})$) and find $R_1/2 \leq R'_1 \leq R_1$ such that
$$ \det Q_{R'_1} \leq (1 + R_0^{-\kappa})^2 \det Q_{R'_1 - R_0^{1-4\kappa}};$$
diagonalizing the quadratic form $Q_{R'_1}$ with respect to $Q_{R'_1  - R_0^{1-4\kappa}}$ we obtain \eqref{uinv} with $R_1$ replaced by the slightly smaller $R'_1$.

It remains to find a $\overline{V}$ and $R_1$ obeying \eqref{uinv-2}, as well as the property about $\overline{V}$ approximating $R_0^{-K}$-harmonic Lipschitz functions. From Theorem \ref{qkt} and a quotienting by $\pi$, we see that if we (temporarily) set $R_1 := R_0^{1-\kappa}$, we may find a subspace $\overline{V}_0$ of $\R^G/\R$ of dimension $O(|S|)^{O(d^3/\kappa^2)}$ such that every $R_0^{-K}$-harmonic Lipschitz function $\overline{u} \in \R^G/\R$ lies at a distance at most $R_0^{-100d}$ from $\overline{V}_0$ in the $\overline{\ell^2(B_S(R_1))}$ norm.

If the property \eqref{uinv-2} holds for all $\overline{u} \in \overline{V}_0$ with this choice of $R_1$, then we are done.  Otherwise, suppose there exists $\overline{u}_0 \in \overline{V}_0$ for which 
\begin{equation}\label{unorm}
 \| \overline{u}_0 \|_{\overline{\ell^2(B_S(R_1/2))}} < R_0^{-200d} \| \overline{u}_0 \|_{\overline{\ell^2(B_S(R_1))}}
\end{equation}
Clearly, $\overline{u}_0$ is non-zero; we may normalize 
\begin{equation}\label{unorm2}
\| \overline{u}_0 \|_{\overline{\ell^2(B_S(R_1))}}=1.
\end{equation}
Let $\overline{V}_1$ be the orthogonal complement of $\overline{u}_0$ in $\overline{V}_0$, thus $\overline{V}_1$ has dimension one less than $\overline{V}_0$.

Now let $\overline{u} \in \R^G/\R$ be a $R_0^{-K}$-harmonic Lipschitz function.  By construction, we can find $\overline{v} \in \overline{V}_0$ such that 
\begin{equation}\label{uv}
 \| \overline{u} - \overline{v} \|_{\overline{\ell^2(B_S(R_1))}} \leq R_0^{-100d}.
\end{equation}
On the other hand, $\overline{u}$ has Lipschitz constant at most $1$ and thus (since constants have been quotiented out)
$$ \| \overline{u} \|_{\overline{\ell^2(B_S(R_1))}} \leq R_1 |B_S(R_1)|^{1/2} \leq R_0^{1+d/2}.$$
In particular
$$ \| \overline{v} \|_{\overline{\ell^2(B_S(R_1))}} \ll R_1^{1+d/2}$$
and hence by Cauchy-Schwarz and \eqref{unorm2}
\begin{equation}\label{cs}
 \langle \overline{v}, \overline{u}_0 \rangle_{\overline{\ell^2(B_S(R_1))}} = O(R_0^{1+d/2}).
\end{equation}

We split $\overline{v} = \overline{v}_1 + \langle \overline{v}, \overline{u}_0 \rangle_{\overline{\ell^2(B_S(R_1))}} \overline{u_0}$, where $\overline{v}_1 \in \overline{V}_1$ is the orthogonal projection of $\overline{v}$ to $\overline{V}_1$.  From \eqref{unorm}, \eqref{unorm2}, \eqref{cs} the latter term is small on $B_S(R_1/2)$:
$$ \| \langle \overline{v}, \overline{u}_0 \rangle_{\overline{\ell^2(B_S(R_1))}} \overline{u_0} \|_{\overline{\ell^2(B_S(R_1/2))}} \leq R_0^{-150d}$$
(say).  From this, \eqref{uv}, and the triangle inequality we conclude that
$$
 \| \overline{u} - \overline{v}_1 \|_{\overline{\ell^2(B_S(R_1/2))}} \leq R_0^{-100d} + R_0^{-150d}.
$$
Thus, the property that $\overline{V}_0$ approximates $R_0^{-K}$-harmonic Lipschitz functions has been inherited by $\overline{V}_1$, at the slight cost of reducing the scale $R_1$ to $R_1/2$ and increasing the error of approximation slightly from $R_0^{-100d}$ to $R_0^{-100d} + R_0^{-150d}$.  One can then iterate this process at most $\dim(\overline{V}_0) = O(|S|)^{O(d^3/\kappa^2)}$ times until we find a space $\overline{V}$ and a scale $R_1$ for which \eqref{uinv-2} is satisfied; the dimension bound and the largeness hypothesis on $R_0$ ensures that $R_1 \geq R_0^{1-2\kappa}$, and that the total error of approximation never exceeds $R_0^{-99d}$.  The claim follows.
\end{proof}

Let $\kappa, (G,S), R_0, d, K, \overline{V}, R_1$ be as in Proposition \ref{bkts}, and set $D := \dim(\overline{V})$, thus
\begin{equation}\label{dimbound}
D = O(|S|)^{O(d^3/\kappa^2)}.
\end{equation}
In practice, $D$ should be viewed as bounded (especially when compared with the large parameter $R_0$), and so the factors of $D^{O(1)}$ that appear below should be ignored at a first reading.

Let $\Omega \subset \overline{V}$ be the set of all elements of $\overline{V}$ which lie within a distance $R_0^{-99d}$ in $\overline{\ell^2(B_S(R_1))}$ norm from a $R_0^{-K}$-harmonic Lipschitz function.  This is a symmetric convex subset of $\overline{V}$ with non-empty interior.  Applying John's theorem \cite{john}, we may then find an ellipsoid $E \subset \overline{V}$ such that $E \subset \Omega \subset \sqrt{D} \cdot E$.  If we let $e_1,\ldots,e_D \in \overline{V}$ and $\lambda_1,\ldots,\lambda_D > 0$ be the principal orthonormal directions and radii of this ellipsoid with respect to the Hilbert space structure $\overline{\ell^2(B_S(R_1))}$ on $\overline{V}$, we thus see that
$$ \lambda_i e_i \in \Omega$$
for $i=1,\ldots,D$, and conversely every element of $\Omega$ can be represented in the form
\begin{equation}\label{repr}
 \sum_{i=1}^D t_i \lambda_i e_i
\end{equation}
for some $t_1,\ldots,t_D = O(D^{O(1)})$.

Since $\Omega$ (and hence $\sqrt{D} \cdot E$) contains the ball of radius $R_0^{-99d}$, we have the lower bound
\begin{equation}\label{lam-lower}
\lambda_i \geq R_0^{-99d} / \sqrt{D}
\end{equation}
for all $1 \leq i \leq D$.  Also since $R_0^{-K}$-harmonic Lipschitz functions have an $\overline{\ell^2(B_S(R_1))}$ norm of at most $R_1 |B_S(R_1)|^{1/2} \leq R_0^{(d+1)/2}$, we have the upper bound
\begin{equation}\label{lam-upper}
\lambda_i \ll R_0^{(d+1)/2}
\end{equation}
for $1 \leq i \leq D$.

Let $1 \leq i \leq D$ and $g \in G$.  By construction, there exists a $R_0^{-K}$-harmonic Lipschitz function $\overline{u}_i \in \R^G/\R$ with $\| \overline{u_i} - \lambda_i e_i \|_{\ell^2(B_S(R_1))} \leq R_0^{-99d}$.  Translating this, we obtain
$$ \| \overline{\rho}(g) \overline{u_i} - \lambda_i \overline{\rho}(g) e_i \|_{\ell^2(g \cdot B_S(R_1))} \leq R_0^{-99d}.$$
In particular, if $g \in B_S(R_0^{1-5\kappa})$, then
$$ \| \overline{\rho}(g) \overline{u_i} - \lambda_i \overline{\rho}(g) e_i \|_{\ell^2(B_S(R_1-R_0^{1-5\kappa}))} \leq R_0^{-99d}.$$
On the other hand, $\overline{\rho}(g) \overline{u_i}$ is also a $R_0^{-K}$-harmonic Lipschitz function, and thus must lie within $R_0^{-99d}$ in $\ell^2(B_S(R_1))$ norm of some function $f_{g,i} \in \Omega$.  By the triangle inequality we thus have
\begin{equation}\label{tgh}
 \| f_{g,i} - \lambda_i \overline{\rho}(g) e_i \|_{\ell^2(B_S(R_1-R_0^{1-5\kappa}))} \leq 2 R_0^{-99d}.
\end{equation}
Using \eqref{repr}, we can write
\begin{equation}\label{tgh-2}
f_{g,i} = \sum_{j=1}^D t_{g,j,i} \lambda_j e_j
\end{equation}
for some $t_{g,i,j}$ obeying the bounds
\begin{equation}\label{tgh-bound0}
|t_{g,j,i}| \ll D^{O(1)}.
\end{equation}
Another bound on these coefficients is obtained by observing that
$$ \| \lambda_i \overline{\rho}(g) e_i \|_{\ell^2(B_S(R_1-R_0^{1-5\kappa}))} \leq \lambda_i \| e_i \|_{\ell^2(B_S(R_1))} = \lambda_i$$
and thus (by \eqref{tgh}, \eqref{lam-lower})
$$ \|f_{g,i} \|_{\ell^2(B_S(R_1-R_0^{1-5\kappa}))} \ll D^{O(1)} \lambda_i$$
and thus (by \eqref{uinv})
$$ \|f_{g,i} \|_{\ell^2(B_S(R_1))} \ll D^{O(1)} \lambda_i.$$
Using \eqref{tgh-2} and the orthonormal properties of $e_j$ we conclude that
\begin{equation}\label{tgh-bound}
|t_{g,j,i}| \ll D^{O(1)} \lambda_i / \lambda_j.
\end{equation}

We now investigate the extent to which the $D \times D$ matrices $U_g := (t_{g,j,i})_{1 \leq j,i \leq D}$ behave like a representation.  From construction we see that we may take $U_\id = I$, where $I$ is the $D \times D$ matrix.  Now we look at the multiplicativity.

\begin{proposition}[$U_g$ approximately multiplicative]\label{perp}  
If $g, h \in B_S(R_0^{1-5\kappa}/2)$ and $1 \leq i,k \leq D$, then the $(k,i)$ entry of the matrix $U_{gh} - U_g U_h$, i.e.
$$ t_{gh,k,i} - \sum_{j=1}^d t_{g,k,j} t_{h,j,i},$$
has magnitude $O( D^{O(1)} R_0^{-99d} / \lambda_k )$.
\end{proposition}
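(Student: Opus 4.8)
The plan is to propagate the defining approximations $\lambda_i\,\overline{\rho}(g)e_i \approx f_{g,i}$ through the cocycle identity $\overline{\rho}(gh)=\overline{\rho}(g)\overline{\rho}(h)$ and then to read off the matrix relation by comparing coordinates in the $\ell^2(B_S(R_1))$-orthonormal system $e_1,\dots,e_D$. Beyond \eqref{tgh} and the coefficient bound \eqref{tgh-bound0}, the only fact about the action that is needed is that translation acts isometrically up to a shift of base point, $\|\overline{\rho}(g)\overline{w}\|_{\ell^2(B)}=\|\overline{w}\|_{\ell^2(g^{-1}\cdot B)}$ for every finite $B\subset G$, so that $\|\overline{\rho}(g)\overline{w}\|_{\ell^2(B_S(r-\|g\|_S))}\le\|\overline{w}\|_{\ell^2(B_S(r))}$ for every $r$. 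Throughout I will work on the ball $B:=B_S(R_1-2R_0^{1-5\kappa})$; since $R_0^{\kappa}$ is enormous, $B\supset B_S(R_1-R_0^{1-4\kappa})$, so \eqref{uinv} may be applied to any element of $\overline{V}$ restricted to $B$ at the cost of only an $O(1)$ factor.

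Fix $i$ and let $g,h\in B_S(R_0^{1-5\kappa}/2)$. Starting from $\|f_{h,i}-\lambda_i\overline{\rho}(h)e_i\|_{\ell^2(B_S(R_1-R_0^{1-5\kappa}))}\le 2R_0^{-99d}$ (which is \eqref{tgh}, valid as $\|h\|_S\le R_0^{1-5\kappa}$) and applying $\overline{\rho}(g)$, which consumes a further $\|g\|_S\le R_0^{1-5\kappa}/2$ of radius, I get $\|\overline{\rho}(g)f_{h,i}-\lambda_i\overline{\rho}(gh)e_i\|_{\ell^2(B)}\le 2R_0^{-99d}$. Now expand $\overline{\rho}(g)f_{h,i}$: since $f_{h,i}=\sum_j t_{h,j,i}\lambda_j e_j$ holds exactly in $\overline{V}$, linearity gives $\overline{\rho}(g)f_{h,i}=\sum_j t_{h,j,i}\,\lambda_j\overline{\rho}(g)e_j$, and replacing each $\lambda_j\overline{\rho}(g)e_j$ by $f_{g,j}$ (again \eqref{tgh}, with an error $2R_0^{-99d}$ amplified by $|t_{h,j,i}|\ll D^{O(1)}$ from \eqref{tgh-bound0} and summed over the $D$ indices $j$) yields $\|\overline{\rho}(g)f_{h,i}-\sum_j t_{h,j,i}f_{g,j}\|_{\ell^2(B)}\ll D^{O(1)}R_0^{-99d}$, while $\sum_j t_{h,j,i}f_{g,j}=\sum_k\bigl(\sum_j t_{g,k,j}t_{h,j,i}\bigr)\lambda_k e_k$ identically. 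Comparing with the direct estimate $\|f_{gh,i}-\lambda_i\overline{\rho}(gh)e_i\|_{\ell^2(B)}\le 2R_0^{-99d}$ (once more \eqref{tgh}, since $\|gh\|_S\le R_0^{1-5\kappa}$) together with $f_{gh,i}=\sum_k t_{gh,k,i}\lambda_k e_k$, the triangle inequality gives $\bigl\|\sum_k\bigl(t_{gh,k,i}-\sum_j t_{g,k,j}t_{h,j,i}\bigr)\lambda_k e_k\bigr\|_{\ell^2(B)}\ll D^{O(1)}R_0^{-99d}$.

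It remains to convert this into the entrywise estimate. Since the left-hand side lies in $\overline{V}$, \eqref{uinv} transfers the bound from $B$ to $B_S(R_1)$ with only an $O(1)$ loss, and orthonormality of the $e_k$ in $\ell^2(B_S(R_1))$ turns the squared norm into $\sum_k\bigl(t_{gh,k,i}-\sum_j t_{g,k,j}t_{h,j,i}\bigr)^2\lambda_k^2$; hence $\bigl|t_{gh,k,i}-\sum_j t_{g,k,j}t_{h,j,i}\bigr|\ll D^{O(1)}R_0^{-99d}/\lambda_k$ for every $k$, which is the assertion. The one genuinely delicate point is the scale bookkeeping: one must check that the radius budget $R_0^{1-5\kappa}/2$ for each of the two translations, plus the gap between $2R_0^{1-5\kappa}$ and $R_0^{1-4\kappa}$, keeps every step inside the regime where \eqref{tgh} and \eqref{uinv} apply, and that the repeated triangle inequalities, controlled by \eqref{tgh-bound0}, never let the accumulated error exceed $D^{O(1)}R_0^{-99d}$. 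This is the main obstacle, but it is routine rather than deep.
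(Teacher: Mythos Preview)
Your proof is correct and follows essentially the same approach as the paper's own argument: propagate \eqref{tgh} through the action $\overline{\rho}(g)$, use the coefficient bound \eqref{tgh-bound0} when summing over $j$, compare with the $gh$ instance of \eqref{tgh}, and then invoke \eqref{uinv} to pass from $B_S(R_1-2R_0^{1-5\kappa})$ back to $B_S(R_1)$ where the $e_k$ are orthonormal. Your radius bookkeeping (checking $2R_0^{1-5\kappa}\le R_0^{1-4\kappa}$ so that \eqref{uinv} applies on $B$) is in fact slightly more explicit than the paper's.
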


\begin{proof} From \eqref{tgh}, \eqref{tgh-2} we have
$$ \| \sum_{j=1}^D t_{h,j,i} \lambda_j e_j - \lambda_i \overline{\rho}(h) e_i \|_{\ell^2(B_S(R_1-R_0^{1-5\kappa}))} \leq 2 R_0^{-99d}$$
for all $1 \leq i \leq D$; applying $\overline{\rho}(g)$ to this, we conclude
$$ \| \sum_{j=1}^D t_{h,j,i} \lambda_j \overline{\rho}(g) e_j - \lambda_i \overline{\rho}(gh) e_i \|_{\ell^2(B_S(R_1-2 R_0^{1-5\kappa}))} \leq 2 R_0^{-99d}.$$
Meanwhile, from \eqref{tgh}, \eqref{tgh-2} we have
$$ \| \lambda_j \overline{\rho}(g) e_j - \sum_{k=1}^D t_{g,k,j} \lambda_k e_k \|_{\ell^2(B_S(R_1-2 R_0^{1-5\kappa}))} \leq 2 R_0^{-99d}
$$
and thus by the triangle inequality we have
$$ \| \lambda_i \overline{\rho}(gh) e_i - \sum_{k=1}^D \sum_{j=1}^D t_{h,j,i} t_{g,k,j} \lambda_k e_k \|_{\ell^2(B_S(R_1-2 R_0^{1-5\kappa}))} \ll D^{O(1)} R_0^{-99d}.
$$
Meanwhile, from one final application of \eqref{tgh}, \eqref{tgh-2} we have
$$ \| \lambda_i \overline{\rho}(gh) e_i - \sum_{k=1}^D t_{gh,k,i} \lambda_k e_k \|_{\ell^2(B_S(R_1-2 R_0^{1-5\kappa}))} \ll D^{O(1)} R_0^{-99d}
$$
and so by the triangle inequality
$$ \| \sum_{k=1}^D [t_{gh,k,i} - \sum_{j=1}^d t_{h,j,i} t_{g,k,j}] \lambda_k e_k \|_{\ell^2(B_S(R_1-2 R_0^{1-5\kappa}))} \ll D^{O(1)} R_0^{-99d}
$$
and thus by \eqref{uinv}
$$ \| \sum_{k=1}^D [t_{gh,i,k} - \sum_{j=1}^d t_{h,i,j} t_{g,j,k}] \lambda_k e_k \|_{\ell^2(B_S(R_1))} \ll D^{O(1)} R_0^{-99d}
$$
As the $e_k$ are orthonormal, the claim follows.
\end{proof}

\section{Fourth step for Proposition \ref{trivcor2}: Taking commutators}\label{kleinersec-4}

We continue the discussion in the previous section.  To simplify the expressions slightly we will make the smallness assumption
\begin{equation}\label{kappa-small}
\kappa \leq \frac{1}{d}, \frac{1}{\log |S|}
\end{equation}
on $\kappa$, and in particular from \eqref{dimbound}
\begin{equation}\label{dimbound-2}
D \leq 2^{O(\kappa^{-O(1)})}.
\end{equation}
For reasons that will be clearer later, we will also need to make $R_0$ larger than previously assumed, in particular we assume that
\begin{equation}\label{r0-big}
 R_0 \geq 2^{2^{C/\kappa^C}}
\end{equation}
for some sufficiently large absolute constant $C$.  (In particular, quantities such as $R_0^{\kappa^{10}/2D^2}$ are still quite large.)

The next step is to locate a large set of group elements $g \in G$ (which will be commutators of other group elements) for which $U_g$ are very close to the identity matrix $I$.  The key point here is that if $U_g, U_h$ are within $\eps$ of $I$ for some small $\eps > 0$ (in some suitable matrix norm), then the commutator $[U_g,U_h]$ is within $O(|D|^{O(1)} \eps^2)$ of $I$.  Meanwhile, from \eqref{perp}, we expect $[U_g,U_h] \approx U_{[g,h]}$.  The strategy here can be viewed as a simplified variant of the argument used to prove the Solovay-Kitaev theorem \cite{sk}.

We turn to the details.  Let us write ${\bigO}_\lambda(X)$ to denote any matrix whose $(k,i)$ entry is $O(X/\lambda_k)$, and ${\bigO}_\lambda^\lambda(X)$ to denote any matrix whose $(k,i)$ entry is $O(X \min(1, \lambda_i/\lambda_k ))$. Then we can rewrite the conclusion of Proposition \ref{perp} as
\begin{equation}\label{ugh}
 U_{gh} = U_g U_h + {\bigO}_\lambda( D^{O(1)} R_0^{-99d} )
\end{equation}
and rewrite \eqref{tgh-bound}, \eqref{tgh-bound0} as
\begin{equation}\label{crude}
U_g = {\bigO}_\lambda^\lambda( D^{O(1)} ).
\end{equation}
We also observe the multiplication laws
\begin{equation}\label{mult}
{\bigO}_\lambda^\lambda(X) {\bigO}_\lambda^\lambda(Y) = {\bigO}_\lambda^\lambda( D^{O(1)} XY ); \quad {\bigO}_\lambda^\lambda(X) {\bigO}_\lambda(Y), {\bigO}_\lambda(X) {\bigO}_\lambda^\lambda(Y) = {\bigO}_\lambda( D^{O(1)} XY ).
\end{equation}
In particular we have
\begin{equation}\label{guo}
 U_g U_{g^{-1}}, U_{g^{-1}} U_g = I + {\bigO}_\lambda( D^{O(1)} R_0^{-99d} )
\end{equation}
for all $g \in B_S(R_0^{1-5\kappa}/2)$.

We have the following fundamental fact:

\begin{lemma}[Commutator bound]\label{comb}  If $g, e, e' \in B_S(R_0^{1-5\kappa}/100)$ and $0 < \eps \leq 1$ are such that 
\begin{equation}\label{boss}
U_e, U_{e'} = I + {\bigO}_\lambda^\lambda(\eps) + {\bigO}_\lambda( X ),
\end{equation}
for some $R_0^{-99d} \leq X \leq R_0^{-98d}$, then
$$U_{g[e,e']g^{-1}} = I + {\bigO}_\lambda^\lambda( D^{O(1)} \eps^2 ) + {\bigO}_\lambda( D^{O(1)} X ).$$
\end{lemma}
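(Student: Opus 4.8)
The plan is to turn the statement into a matrix identity and run a Solovay--Kitaev-type argument, exploiting that commutators of matrices near the identity collapse quadratically. Write $U_h = I + A_h$ throughout. Since $g,e,e'$ all lie in $B_S(R_0^{1-5\kappa}/100)$ (and $S$ is symmetric, so $e^{-1},e'^{-1}$ do too), every partial product appearing in $g[e,e']g^{-1} = g\,e\,e'\,e^{-1}\,e'^{-1}\,g^{-1}$ lies in $B_S(R_0^{1-5\kappa}/2)$, so \eqref{ugh} applies at every stage. Peeling off one generator at a time, and using that any product of at most six of the $U_h$ is ${\bigO}_\lambda^\lambda(D^{O(1)})$ by \eqref{crude} and \eqref{mult}, each application of \eqref{ugh} contributes an error of the form ${\bigO}_\lambda^\lambda(D^{O(1)})\cdot{\bigO}_\lambda(D^{O(1)}R_0^{-99d}) = {\bigO}_\lambda(D^{O(1)}R_0^{-99d})$; summing the finitely many of these and using $X \ge R_0^{-99d}$ gives
$$ U_{g[e,e']g^{-1}} = U_g\,U_e\,U_{e'}\,U_{e^{-1}}\,U_{e'^{-1}}\,U_{g^{-1}} + {\bigO}_\lambda(D^{O(1)}X).$$

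Next I would show that the inverses are quadratically close to $-A_e$. From \eqref{guo}, $(I+A_e)(I+A_{e^{-1}}) = I + {\bigO}_\lambda(D^{O(1)}R_0^{-99d})$, i.e.\ $A_e + A_{e^{-1}} = -A_e A_{e^{-1}} + {\bigO}_\lambda(D^{O(1)}R_0^{-99d})$. Feeding in first the crude bound $A_{e^{-1}} = {\bigO}_\lambda^\lambda(D^{O(1)})$ and then the improvement it produces, and using the hypothesis $A_e = {\bigO}_\lambda^\lambda(\eps) + {\bigO}_\lambda(X)$, the multiplication laws \eqref{mult}, and the trivial inequalities $\eps \le 1$ and $X \le R_0^{-98d} \le 1$ (so that $\eps X, X^2 \le X$), a short bootstrap should give
$$ A_{e^{-1}} = -A_e + {\bigO}_\lambda^\lambda(D^{O(1)}\eps^2) + {\bigO}_\lambda(D^{O(1)}X),$$
and likewise for $e'$; in particular each of $A_e, A_{e'}, A_{e^{-1}}, A_{e'^{-1}}$ is ${\bigO}_\lambda^\lambda(D^{O(1)}\eps) + {\bigO}_\lambda(D^{O(1)}X)$.

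I would then expand $U_e U_{e'} U_{e^{-1}} U_{e'^{-1}} = (I+A_e)(I+A_{e'})(I+A_{e^{-1}})(I+A_{e'^{-1}})$. The constant term is $I$; the four linear-in-$A$ terms sum to $(A_e + A_{e^{-1}}) + (A_{e'} + A_{e'^{-1}})$, which by the previous step is ${\bigO}_\lambda^\lambda(D^{O(1)}\eps^2) + {\bigO}_\lambda(D^{O(1)}X)$ — this cancellation of the linear part is exactly the gain coming from the commutator structure; every remaining term is a product of at least two of the $A$'s, hence again ${\bigO}_\lambda^\lambda(D^{O(1)}\eps^2) + {\bigO}_\lambda(D^{O(1)}X)$ by \eqref{mult} (and $\eps X, X^2 \le X$). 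So $U_e U_{e'} U_{e^{-1}} U_{e'^{-1}} = I + E$ with $E = {\bigO}_\lambda^\lambda(D^{O(1)}\eps^2) + {\bigO}_\lambda(D^{O(1)}X)$. Finally, $U_g(I+E)U_{g^{-1}} = U_g U_{g^{-1}} + U_g E U_{g^{-1}}$; the first term is $I + {\bigO}_\lambda(D^{O(1)}R_0^{-99d})$ by \eqref{guo}, and since $U_g, U_{g^{-1}} = {\bigO}_\lambda^\lambda(D^{O(1)})$, the laws \eqref{mult} give $U_g E U_{g^{-1}} = {\bigO}_\lambda^\lambda(D^{O(1)}\eps^2) + {\bigO}_\lambda(D^{O(1)}X)$. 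Combining this with the first display and absorbing $R_0^{-99d}$ into $X$ finishes the proof.

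The commutator and conjugation mechanics are routine; the real work is the bookkeeping of the two error species ${\bigO}_\lambda^\lambda$ and ${\bigO}_\lambda$ through all these products — in particular making sure the ${\bigO}_\lambda$-type errors never degrade below the target level ${\bigO}_\lambda(D^{O(1)}X)$, which is what the calibration $R_0^{-99d} \le X \le R_0^{-98d}$ is for (the lower bound swallows the multiplicativity errors of \eqref{ugh} and \eqref{guo}; the upper bound, together with $\lambda_i \ge R_0^{-99d}/\sqrt D$ from \eqref{lam-lower} and the smallness hypotheses \eqref{kappa-small}, \eqref{r0-big}, keeps the accumulated $D^{O(1)}$- and $R_0$-power factors harmless). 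This is the step one must be careful with; everything else is a mechanical iteration of \eqref{ugh}, \eqref{crude}, \eqref{guo}, and \eqref{mult}.
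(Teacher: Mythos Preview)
Your argument is correct and in the same Solovay--Kitaev spirit as the paper's, but the algebra is organized differently. The paper never extracts $A_{e^{-1}}\approx -A_e$: instead it writes $U_{e'}=(U_{e'}-I)+I$, uses $(U_e-I)(U_{e'}-I)={\bigO}_\lambda^\lambda(D^{O(1)}\eps^2)+{\bigO}_\lambda(D^{O(1)}X)$ to commute $U_e$ past $(U_{e'}-I)$, and then collapses the pairs $U_eU_{e^{-1}}$ and $U_{e'}U_{(e')^{-1}}$ directly via \eqref{guo}. Your route---a full $16$-term expansion plus a two-step bootstrap on $A_{e^{-1}}$---is more hands-on but perfectly valid; the paper's is shorter because \eqref{guo} does the linear cancellation for free without ever isolating $A_{e^{-1}}$.

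One small point of care: the appeal to ``$X^2\le X$'' is not by itself enough for the cross terms of type ${\bigO}_\lambda(X)\cdot{\bigO}_\lambda(X)$, since \eqref{mult} gives no rule for that product. The fix (which you implicitly use in the bootstrap) is to replace one of the two factors by the crude bound ${\bigO}_\lambda^\lambda(D^{O(1)})$ coming from \eqref{crude}, so that \eqref{mult} applies and yields ${\bigO}_\lambda(D^{O(1)}X)$. With that made explicit, every step goes through.
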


\begin{proof}  From \eqref{ugh}, \eqref{crude}, \eqref{mult} we have 
$$
U_{[e,e']} = U_e U_{e'} U_{e^{-1}} U_{(e')^{-1}} + {\bigO}_\lambda( D^{O(1)} X ).
$$
Splitting $U_{e'} = (U_{e'}-I) + I$ and using \eqref{guo}, \eqref{crude}, \eqref{mult} we conclude that
\begin{equation}\label{uho}
U_{[e,e']} = U_e (U_{e'}-I) U_{e^{-1}} U_{(e')^{-1}} + U_{(e')^{-1}} + {\bigO}_\lambda( D^{O(1)} X ).
\end{equation}
From \eqref{boss}, \eqref{mult} we have
$$ (U_e-I) (U_{e'}-I), (U_{e'}-I) (U_e-I) = {\bigO}_\lambda^\lambda(D^{O(1)} \eps^2) + {\bigO}_\lambda(D^{O(1)} X)$$
and thus
$$ U_e (U_{e'}-I) = (U_{e'}-I) U_e + {\bigO}_\lambda^\lambda(D^{O(1)} \eps^2) + {\bigO}_\lambda(D^{O(1)} X).$$
Inserting this into \eqref{uho} and using \eqref{ugh}, \eqref{crude}, \eqref{mult} we conclude that
$$
U_{[e,e']} = (U_{e'}-I) U_{(e')^{-1}} + U_{(e')^{-1}} + {\bigO}_\lambda^\lambda(D^{O(1)} \eps^2) + {\bigO}_\lambda( D^{O(1)} X ).
$$
Applying \eqref{uho} again we obtain 
$$U_{[e,e']} = I + {\bigO}_\lambda^\lambda( D^{O(1)} \eps^2 ) + {\bigO}_\lambda( D^{O(1)} X ).$$
If we multiply this on the left by $U_g$ and on the right by $U_{g^{-1}}$ and use \eqref{guo}, \eqref{crude}, \eqref{mult} we obtain the claim.
\end{proof}

We would like to iterate this bound to find many $g$ with $U_g$ very close (e.g. $O(R_0^{-50d})$) to $I$, but to get started we will need to locate a preliminary supply of $g$ for which $U_g$ is somewhat close (e.g. $O(R_0^{-\kappa/2D^2})$) to $I$.  Morally, this should follow from the Dirichlet box principle (i.e. the pigeonhole principle) since the $U_g$ (and $U_{g^{-1}}$) are morally localized to a compact set of matrices (thanks to \eqref{crude}) and are approximately multiplicative.  We now make this intuition precise.

\begin{lemma}[Box principle] There exists a subgroup $(G',S')$ of $G$ with $S' \subset B_S(R_0^{\kappa^{10}})$ and finite index $|G:G'| \leq R_0^{\kappa^{10}}$ such that $U_e = I + {\bigO}_\lambda^\lambda( D^{O(1} R_0^{-\kappa^{10}/2D^2} ) + {\bigO}_\lambda(D^{O(1)} R_0^{-99d} )$ for all $e \in S'$.
\end{lemma}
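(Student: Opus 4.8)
The plan is to run a Dirichlet box (pigeonhole) argument on the matrices $U_g$, using as its only inputs the boundedness estimate \eqref{crude}, the approximate multiplicativity \eqref{ugh} and \eqref{guo}, and the multiplication rules \eqref{mult}. By \eqref{crude}, every $U_g$ with $g \in B_S(R_0^{1-5\kappa}/2)$ lies in the fixed bounded set $\mathcal{B}$ of $D \times D$ real matrices $A$ with $|A_{ki}| \leq M\min(1,\lambda_i/\lambda_k)$, where $M := D^{O(1)}$ and we have relabelled so that $\lambda_1 \leq \cdots \leq \lambda_D$. Set $\eta := R_0^{-\kappa^{10}/2D^2}$ and partition $\mathcal{B}$ into \emph{cells} by subdividing the coordinate range $[-M\mu_{ki},M\mu_{ki}]$, $\mu_{ki} := \min(1,\lambda_i/\lambda_k)$, into pieces of length $\eta\mu_{ki}$. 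Two matrices lying in one cell differ by a matrix of the form ${\bigO}_\lambda^\lambda(\eta)$, and since the weights $\mu_{ki}$ cancel in the count, the number of cells is $N \leq (3M/\eta)^{D^2} = D^{O(D^2)} R_0^{\kappa^{10}/2}$, which by \eqref{r0-big} and \eqref{dimbound-2} (with the constant in \eqref{r0-big} taken large enough) is at most $R_0^{\kappa^{10}}/3$.

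Next I would let $S'$ be the set of all $e \in B_S(R_0^{\kappa^{10}})$ with $U_e = I + {\bigO}_\lambda^\lambda(CD^C\eta) + {\bigO}_\lambda(CD^C R_0^{-99d})$ for a suitably large absolute constant $C$, and set $G' := \langle S'\rangle$. Since $CD^C\eta = D^{O(1)} R_0^{-\kappa^{10}/2D^2}$, every element of $S'$ already obeys the conclusion of the lemma, and $S' \subseteq B_S(R_0^{\kappa^{10}})$; a short computation with \eqref{guo}, \eqref{crude}, \eqref{mult} shows that (for $C$ large) $S'$ is symmetric and contains the identity, so $(G',S')$ is a legitimate finitely generated group. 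The crucial point is that the cell of $U_g$ determines the coset $gG'$ for $g$ in a ball: if $g,g' \in B_S(R_0^{\kappa^{10}}/2)$ lie in the same cell then $U_g - U_{g'} = {\bigO}_\lambda^\lambda(\eta)$, and since $g,g',g'^{-1}g$ all lie in $B_S(R_0^{1-5\kappa}/2)$ (note $R_0^{\kappa^{10}} \leq R_0^{1-5\kappa}/2$ as $\kappa$ is small), \eqref{ugh}, \eqref{guo}, \eqref{crude}, \eqref{mult} give
$$ U_{g'^{-1}g} = U_{g'^{-1}}(U_g - U_{g'}) + U_{g'^{-1}}U_{g'} + {\bigO}_\lambda(D^{O(1)} R_0^{-99d}) = I + {\bigO}_\lambda^\lambda(D^{O(1)}\eta) + {\bigO}_\lambda(D^{O(1)} R_0^{-99d}), $$
so $g'^{-1}g \in S' \subseteq G'$ and hence $gG' = g'G'$.

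It remains to bound $|G:G'|$. Let $c(r) := |B_S(r) G'/G'|$ count the cosets of $G'$ met by $B_S(r)$. By the previous paragraph the coset map restricted to $B_S(R_0^{\kappa^{10}}/2)$ factors through the at most $N$ cells, so $c(R_0^{\kappa^{10}}/2) \leq N$. On the other hand $c$ is non-decreasing; and if $c(r) = c(r-1)$ for some $r$, then $B_S(r) G' = B_S(r-1) G'$, so $B_S(r+1) G' = (S \cup \{e\}) B_S(r) G' = (S\cup\{e\})B_S(r-1)G' = B_S(r-1) G'$ and, inductively, $c$ is constant from $r-1$ on. Thus $c$ is strictly increasing until it stabilizes, whence $c(r) \geq r+1$ before stabilization. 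Since $c(R_0^{\kappa^{10}}/2) \leq N < R_0^{\kappa^{10}}/2$, strict increase would be violated unless $c$ has already stabilized by radius $R_0^{\kappa^{10}}/2$; hence $|G:G'| = \sup_r c(r) = c(R_0^{\kappa^{10}}/2) \leq N \leq R_0^{\kappa^{10}}$, which completes the proof.

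The step I expect to be the main obstacle is reconciling the two competing requirements on $S'$: its elements must be \emph{short} (lying in $B_S(R_0^{\kappa^{10}})$) and yet generate a subgroup of \emph{small index}, whereas a naive pigeonhole only produces coset representatives inside a ball whose radius is a priori of the same size as the index being sought. The device that resolves this is the observation that the coset-counting function $c(r)$ is strictly increasing until it freezes, which forces it to freeze already within $B_S(R_0^{\kappa^{10}}/2)$ — precisely the range in which the $U_g$ are defined and controlled by the cell decomposition. A secondary, bookkeeping difficulty is that the ${\bigO}_\lambda(\cdot)$ error terms are \emph{not} small in operator norm (the smallest radius $\lambda_i$ can be as tiny as a large negative power of $R_0$), so one must push the two-part error structure ${\bigO}_\lambda^\lambda(\cdot) + {\bigO}_\lambda(\cdot)$ through the matrix products using the structured rules \eqref{mult} rather than any single matrix norm.
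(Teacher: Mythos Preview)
Your argument is correct and follows essentially the same box-principle strategy as the paper: partition the bounded region $\{A:A={\bigO}_\lambda^\lambda(D^{O(1)})\}$ into at most $R_0^{\kappa^{10}}$ cells of ${\bigO}_\lambda^\lambda(\eta)$-diameter, and use that two $g,g'$ in the same cell differ by an element with $U_{g'^{-1}g}\approx I$. The only organizational difference is that the paper tracks the set $A_r$ of \emph{cells} hit by $B_S(r)$ and pigeonholes on that to find $r\leq M$ with $A_{r+1}=A_r$, then builds $S'$ from the products $gg_m^{-1}$; you instead declare $S'$ upfront as the short elements with $U_e\approx I$, and pigeonhole on the coset-count $c(r)$. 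These are equivalent reformulations of the same idea.

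One small point to tidy: the symmetry of $S'$ does not follow automatically from your definition with a fixed constant $CD^C$, since passing from $U_e$ to $U_{e^{-1}}$ via \eqref{guo}, \eqref{crude}, \eqref{mult} costs an extra $D^{O(1)}$ factor. Either symmetrize $S'$ by hand (the inverse of any element of $S'$ still satisfies the conclusion of the lemma with a slightly larger $D^{O(1)}$ constant, which is all that is claimed), or define $S'$ by requiring both $U_e$ and $U_{e^{-1}}$ to be close to $I$; neither change affects the coset argument.
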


\begin{proof}  From \eqref{crude}, the matrices $U_g$ for $g \in B_S(R_0^{1-5\kappa}/2)$ are contained in a set of matrices the form $\{ U: U = {\bigO}_\lambda^\lambda( D^{O(1)} ) \}$.  From the convexity of the conditions used in the ${\bigO}_\lambda^\lambda()$ notation (and the largeness hypothesis on $R_0$), we may cover this $D^2$-dimensional set by $M \leq R_0^{\kappa^{10}}$ balls $B_1,\ldots,B_M$ of the form $B_m = \{ U: U = U_m + {\bigO}_\lambda^\lambda( R^{-\kappa^{10}/2D^2} ) \}$ for some $U_m =  {\bigO}_\lambda^\lambda( D^{O(1)} )$.  (Here we are using \eqref{r0-big} to clean up the bounds somewhat.)

For each $r > 0$, let $A_r \subset \{1,\ldots,M\}$ be the set of those $1 \leq m \leq M$ such that $U_g \in B_m$ for some $g \in B_S(r)$.  Clearly the $A_r$ are increasing in $r$, so by the pigeonhole principle there exists $1 \leq r \leq M$ such that $A_{r+1} = A_r$.   

Fix this $r$.  For each $m \in A_r$, let $g_m \in B_S(r)$ be a representative such that $U_{g_m} \in B_m$.  Since $A_{r+1}=A_r$, we see that for each $g \in B_S(r+1)$ there exists $m \in A_r$ such that $U_g \in B_m$, and in particular
$$ U_g = U_{g_m} + {\bigO}_\lambda^\lambda( R_0^{-\kappa^{10}/2D^2} ).$$
Multiplying by $U_{g_m^{-1}}$ on the right and using \eqref{guo}, \eqref{crude}, \eqref{mult}, we see that
$$ U_{g g_m^{-1}} = I + {\bigO}_\lambda^\lambda( D^{O(1)} R_0^{-\kappa^{10}/2D^2} ) + {\bigO}_\lambda( D^{O(1)} R_0^{-99d} )$$
and similarly
$$ U_{g_m g^{-1}} = I + {\bigO}_\lambda^\lambda( D^{O(1)} R_0^{-\kappa^{10}/2D^2} ) + {\bigO}_\lambda( D^{O(1)} R_0^{-99d} ).$$
Let $S'$ denote the set of all $g g_m^{-1}, g_m g^{-1}$ that arise in this manner, then $S' \subset B_S(2r+1)$ is symmetric and
$$ B_S(r+1) \subset S' \cdot \{g_1,\ldots,g_M\} \subset S' \cdot B_S(r).$$
Iterating this we see that
$$ B_S(r+n) \subset B_{S'}(n) \cdot B_S(r) \subset B_{S'}(n+1) \cdot \{g_1,\ldots,g_M\}$$
for all $n$; thus if $G'$ denotes the group generated by $S'$, then on taking unions as $n \to \infty$ we conclude that
$$ G \subset G' \cdot \{g_1,\ldots,g_m\}.$$
Thus $G'$ has index at most $M$, and the claim follows.
\end{proof}

Write $\eps := R_0^{-\kappa^{10}/2D^2}$, thus 
\begin{equation}\label{emm}
U_e = I + {\bigO}_\lambda^\lambda(D^{O(1)} \eps) + {\bigO}_\lambda(D^{O(1)} R_0^{-99d})
\end{equation}
 for all $e \in S'$.

Since $(G,S)$ is a $(R_0,d)$-growth group and $S' \subset B_S(R_0^{\kappa^{10}})$, we see that $(G',S')$ is a $(R_0^{1-\kappa^{10}},d/(1-\kappa^{10}))$-growth group.  Applying Lemma \ref{zonk}, we see that the commutator group $(G')^{(2)} := [G',G']$ can be generated by a set $(S')^{(2)}$ of generators in $B_{S'}(R_0^{\kappa^{10}})$ of the form $g[e,e']g^{-1}$ for some $e, e' \in S'$ and $g \in B_{S'}(R_0^{\kappa^{10}})$, and furthermore $((G')^2,(S')^{(2)})$ is a $(R_0^{1-2\kappa^{10}},d/(1-2\kappa^{10}))$-growth group.  From Lemma \ref{comb} we conclude that
$$ U_e = I + {\bigO}_\lambda^\lambda(D^{O(1)} \eps^2) + {\bigO}_\lambda(D^{O(1)} R_0^{-99d})$$
for all $e \in (S')^{(2)}$, where the $O(1)$ exponents are larger than those in \eqref{emm} by a multiplicative absolute constant.

Let $l$ be the first integer such that
\begin{equation}\label{epsd}
\eps^{2^l} < R_0^{-100d}
\end{equation}
or equivalently
$$2^l (\kappa/2D^2) > 100 d$$
thus by \eqref{dimbound}, \eqref{kappa-small}
\begin{equation}\label{lbound}
 l \ll \frac{d^3}{\kappa^2} (1 + \log |S|) \ll \kappa^{-6}.
 \end{equation}
We can iterate the above procedure $l$ times and conclude that the $l^{th}$ group $(G')^{(l)}$ in the derived series of $G'$ is generated by a set $(S')^{(l)} \in B_{S'}(R_0^\kappa)$ with the property that
$$ U_e = I + {\bigO}_\lambda^\lambda(D^{2^{O(l)}} \eps^{2^l}) + {\bigO}_\lambda(D^{2^{O(l)}} R_0^{-99d})$$
for all $e \in (S')^{(l)}$.
By \eqref{r0-big}, \eqref{dimbound-2}, \eqref{lam-upper}, \eqref{epsd} we may clean this up as
$$ U_e = I + {\bigO}_\lambda( R_0^{-90 d}) $$
(say). From \eqref{tgh}, \eqref{tgh-2} we conclude that
$$ \| \rho(e) (\lambda_i e_i) - \lambda_i e_i \|_{\overline{\ell^2(B_S(R_1))}} \ll R_0^{-80 d}$$
(say) for all $1 \leq i \leq D$ and $e \in (S')^{(l)}$, which by \eqref{repr} implies that
$$ \| \rho(e) f - f \|_{\overline{\ell^2(B_S(R_1))}} \ll R_0^{-70d}$$
(say) for all $f \in \Omega$ and $e \in (S')^{(l)}$.  In particular, for any $R_0^{-K}$-harmonic Lipschitz function $u$, we see from the triangle inequality that
$$ \| \rho(e) u - u \|_{\overline{\ell^2(B_S(R_1))}} \ll R_0^{-60d}$$
(say) for all $e \in (S')^{(l)}$.  Unpacking the definition of the $\overline{\ell^2(B_S(R_1))}$ norm, and noting that $B_S(R_1)$ has cardinality at most $R_0^d$, we conclude that
$$ u(e g) - u(e h) = u(g) - u(h) + O( R_0^{-50d})$$
(say) for all $g, h \in B_S(R_1)$ and $e \in (S')^{(l)}$ (here we use the symmetry of $(S')^{(l)}$).  In particular, for $e,e' \in (S')^{(l)}$ and $g \in B_S(R_1/2)$ we have
$$ u(e e' g) = u(e g) + u( e' g ) - u( g ) + O( R_0^{-50d}).$$
Reversing $e$ and $e'$ and subtracting we conclude that
$$ u(e e' g) = u(e' e g) + O( R_0^{-50d}),$$
and thus
$$ u([e, e'] g) = u(g) + O( R_0^{-50d}),$$
for $e,e' \in (S')^{(l)}$ and $g \in B_S(R_1/4)$, which implies (after replacing $g$ by $g^{-1} h$, and $u(\cdot)$ by $u(g \cdot)$) that
\begin{equation}\label{ughr}
u(g [e, e'] g^{-1} h) = u(h) + O( R_0^{-50d})
\end{equation}
for all $g, h \in B_S(R_1/8)$ and $e,e' \in (S')^{(l)}$.

By invoking Lemma \ref{zonk} one last time, one can find a set $(S')^{(l+1)}$ of generators of $(G')^{(l+1)}$ in $B_{S'}(R_0^{2\kappa})$ of the form $g[e,e']g^{-1}$ for some $e,e' \in (S')^{(l)}$ and $g \in B_S(R_0^{2\kappa})$, and so from \eqref{ughr} we have concluded the following:

\begin{theorem}[Many trivial directions for harmonic Lipschitz functions]\label{trivdir}  Let $0 < \kappa < 0.1$, $R_0, d \geq 1$, and let $G$ be a $(R_0,d)$-growth group.  Assume the bounds \eqref{kappa-small}, \eqref{r0-big} for some sufficiently large $C$.  Then there exists a $(R_0^{1-\kappa^{10}},d/(1-\kappa^{10}))$-growth subgroup $(G',S')$ of $G$ of index at most $R_0^{\kappa^{10}}$ and a positive integer $l = O(\kappa^{-6})$ such that $(G')^{(l+1)}$ is generated by a set $(S')^{(l+1)} \subset B_{S'}(R_0^{2\kappa})$ obeying the bound
\begin{equation}\label{ughr-2}
u(ex) = u(x) + O( R_0^{-50d})
\end{equation}
for all $e \in (S')^{(l+1)}$, $x \in B_S(R_0^{1-3\kappa})$, and all $R_0^{-K}$-harmonic Lipschitz functions $u: G \to \R$, where $K$ is defined by \eqref{rbit2}.
\end{theorem}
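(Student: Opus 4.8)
The plan is to collect the structures built in Sections~\ref{kleinersec-1}--\ref{kleinersec-4}. First I would invoke Proposition~\ref{bkts}, which under the stated hypotheses on $R_0$ produces a finite-dimensional $\overline V \subset \R^G/\R$ of dimension $D = O(|S|)^{O(d^3/\kappa^2)}$ and a ``good scale'' $R_1 \in [R_0^{1-2\kappa}, R_0^{1-\kappa}]$: every $R_0^{-K}$-harmonic Lipschitz function is approximated to within $R_0^{-99d}$ in $\overline{\ell^2(B_S(R_1))}$ by an element of $\overline V$, and the $\overline{\ell^2}$ norms at scales $R_1$ and $R_1 - R_0^{1-4\kappa}$ are comparable on $\overline V$ by \eqref{uinv}. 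Applying John's theorem \cite{john} to the symmetric convex set $\Omega \subset \overline V$ of near-harmonic elements gives principal directions $e_1,\dots,e_D$ and radii $\lambda_1,\dots,\lambda_D$, and hence the $D\times D$ matrices $U_g$ encoding the translation action $\overline\rho(g)$ on $\overline V$ for $g \in B_S(R_0^{1-5\kappa})$. These satisfy the crude size bound \eqref{crude} and the approximate multiplicativity \eqref{ugh} of Proposition~\ref{perp}.

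Next I would descend and iterate commutators. The box-principle lemma of Section~\ref{kleinersec-4} furnishes the finite-index subgroup $(G',S')$ of the theorem, with $S' \subset B_S(R_0^{\kappa^{10}})$, index at most $R_0^{\kappa^{10}}$, and $U_e$ within $\eps := R_0^{-\kappa^{10}/2D^2}$ of $I$ (modulo an $\bigO_\lambda(D^{O(1)}R_0^{-99d})$ term) for each $e \in S'$; by Remark~\ref{growth} it remains a growth group of order $d/(1-\kappa^{10})$. Then I would apply Lemma~\ref{comb} repeatedly down the derived series: Lemma~\ref{zonk} expresses the generators of each successive term $(G')^{(j+1)}$ as conjugates $g[e,e']g^{-1}$ of commutators of the previous generators with conjugators $g \in B_{S'}(R_0^{\kappa})$ --- short enough for $U_{g[e,e']g^{-1}}$ to be defined and for Lemma~\ref{comb} to apply --- and each pass squares the small parameter while only inflating the $D^{O(1)}$ prefactor by a bounded power. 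Choosing $l = O(\kappa^{-6})$ minimal with $\eps^{2^l} < R_0^{-100d}$ (so that \eqref{epsd}, \eqref{lbound} hold), and cleaning the bounds with \eqref{lam-upper}, \eqref{dimbound-2}, \eqref{r0-big}, one reaches $U_e = I + \bigO_\lambda(R_0^{-90d})$ for every generator $e$ of $(G')^{(l)}$.

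Finally I would translate the matrix estimate back to the functions. Unwinding the definition of $U_e$ via \eqref{tgh-2}, \eqref{repr} and using \eqref{uinv}, the bound $U_e = I + \bigO_\lambda(R_0^{-90d})$ gives $\|\overline\rho(e) f - f\|_{\overline{\ell^2(B_S(R_1))}} \ll R_0^{-70d}$ for all $f \in \Omega$, hence $\|\overline\rho(e)u - u\|_{\overline{\ell^2(B_S(R_1))}} \ll R_0^{-60d}$ for every $R_0^{-K}$-harmonic Lipschitz $u$ and every generator $e$ of $(G')^{(l)}$. Since $|B_S(R_1)| \leq R_0^d$, this becomes the pointwise statement $u(eg) - u(eh) = u(g) - u(h) + O(R_0^{-50d})$ for $g,h \in B_S(R_1)$; applying it twice gives $u(ee'g) = u(eg) + u(e'g) - u(g) + O(R_0^{-50d})$, and swapping $e,e'$ and subtracting yields $u([e,e']g) = u(g) + O(R_0^{-50d})$, whence $u(g[e,e']g^{-1}h) = u(h) + O(R_0^{-50d})$ for $g,h \in B_S(R_1/8)$ and $e,e' \in (S')^{(l)}$, which is \eqref{ughr}. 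One last application of Lemma~\ref{zonk} presents $(G')^{(l+1)}$ by a symmetric set $(S')^{(l+1)} \subset B_{S'}(R_0^{2\kappa})$ of such conjugated commutators with conjugators of length $O(R_0^{2\kappa})$; since $R_1/8 \geq R_0^{1-3\kappa}$, feeding these into \eqref{ughr} produces exactly \eqref{ughr-2}.

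The step I expect to be the main obstacle is keeping the whole tower of scales consistent: each invocation of Lemma~\ref{zonk} shrinks the admissible radius (through $R_0^{\kappa^{10}}$, $R_0^{\kappa}$, $R_0^{2\kappa}$, down to $R_0^{1-3\kappa}$) and slightly degrades the growth order, and at each of the $l = O(\kappa^{-6})$ commutator steps one must verify that the conjugating elements and the accumulating $D^{O(1)}$ and $2^{O(l)}$ prefactors are still dominated by the gain $\eps^{2^l} < R_0^{-100d}$. This is precisely why $R_0$ must satisfy the doubly-exponential bound \eqref{r0-big}, which guarantees that $\eps = R_0^{-\kappa^{10}/2D^2}$ is still tiny and all intermediate balls are still huge; the resulting bookkeeping is lengthy but conceptually routine, the only genuinely new ingredient (the Solovay--Kitaev-style collapse of commutators towards the identity) being already encapsulated in Lemma~\ref{comb}.
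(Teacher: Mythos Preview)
Your proposal is correct and follows essentially the same route as the paper: invoke Proposition~\ref{bkts} and John's theorem to set up the approximate representation $g\mapsto U_g$, use the box-principle lemma to pass to $(G',S')$ with $U_e$ near $I$, iterate Lemma~\ref{comb} along the derived series via Lemma~\ref{zonk} until $U_e = I + \bigO_\lambda(R_0^{-90d})$ on $(S')^{(l)}$, unwind to the pointwise $\overline{\ell^2}$ bound, and finish with the commutator identity and one last application of Lemma~\ref{zonk}. The only difference is cosmetic bookkeeping of the intermediate radii (the paper tracks $R_0^{\kappa^{10}}$ at each derived step before collecting into $R_0^{\kappa}$), and your flagged obstacle---keeping the $D^{2^{O(l)}}$ prefactors dominated by $\eps^{2^l}$ via \eqref{r0-big}---is exactly the one the paper handles in the same way.
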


\begin{remark} An instructive example here is that of the Heisenberg group $G = \begin{pmatrix} 1 & \Z & \Z \\ 0 & 1 & \Z \\ 0 & 0 & 1 \end{pmatrix}$, with the elementary row operations as generators.  One can show that the only harmonic Lipschitz functions $u: G \to \R$ are those functions which are affine-linear combinations of the near-diagonal coefficients $x, z$ of the group element $\begin{pmatrix} 1 & x & y \\ 0 & 1 & z \\ 0 & 0 & 1 \end{pmatrix}$, and in particular such functions are invariant with respect to the vertical element $e := \begin{pmatrix} 1 & 0 & 1 \\ 0 & 1 & 0 \\ 0 & 0 & 1 \end{pmatrix}$.  One way to see this is to use the harmonicity to observe the reproducing formula $u = u * \sigma^{(m)}$ for any $m$, where $\sigma^{(m)}$ are the random walk distributions defined in Proposition \ref{infilip}.  As $e$ is central, one then has $\partial_e u = u * \partial_e \sigma^{(m)}$, where $\partial_e f(x) := f(ex) - f(x)$.  But a computation shows that the total variation of $\partial_e \sigma^{(m)}$ is $O(1/m^2)$ (the intuition here is that $\sigma^{(m)}$ behaves like uniform probability measure on a box of dimensions $O(\sqrt{m})$ in the $x,z$ directions and $O(m)$ in the $y$ directions); in contrast, $u$, being Lipschitz, can only fluctuate by at most $O(m)$ on the bulk of the support of $\sigma^{(m)}$.  Estimating things carefully and taking limits as $m \to \infty$ we conclude that $\partial_e u = 0$, at which point it is easy to verify the claim.  This example illustrates 
the general phenomenon, established in our companion paper~\cite{Sh-Ta} (using a different method),
 that Lipschitz harmonic functions on nilpotent groups vanish along iterated 
commutator directions; in fact modulo the constants they are exactly the 
additive group characters. 
\end{remark}

\section{Final step for Proposition \ref{trivcor2}: non-trivial harmonic Lipschitz functions have large range}\label{kleinersec-5}

An easy application of the maximum principle shows that any non-constant harmonic function $u: G \to \R$ must attain an infinite number of values.  We thus expect any $\eps$-harmonic Lipschitz function $u: G \to \R$ obeying some non-degeneracy condition (e.g. a lower bound on $\nabla u(\id)$) to also take on a large number of values in any given ball $B_S(R)$.

In fact we will need a stronger result (under a polynomial growth hypothesis), which asserts that a non-degenerate $\eps$-harmonic function must in fact fluctuate by $\gg R$ on the ball $B_S(R)$:

\begin{proposition}[Lower bound on range]\label{lorrange}  Let $(G,S)$ be a $(R_0,d)$-growth group for some $R_0, d \geq 1$, let $0 < \kappa < 0.1$, and suppose that \eqref{rbit} holds for some sufficiently large absolute constant $C$. Let $u:G \to \R$ be an $R_0^{-K}$-harmonic function such that $|\nabla u(\id)| \geq 1/|S|$, where $K$ was defined in \eqref{rbit}.  Then for every $R_0^{1-5\kappa} \leq R \leq R_0^{1-2\kappa}$ we have
\begin{equation}\label{super}
\sup_{x \in B_S(R)} |u(x)-u(\id)| \gg O(|S|)^{-O(d^3/\kappa^2)} R_0^{-\kappa} R.
\end{equation}
\end{proposition}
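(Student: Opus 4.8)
The plan is to argue by contradiction. Normalize $v := u - u(\id)$, so that $v$ is $R_0^{-K}$-harmonic Lipschitz with $v(\id)=0$ and $|\nabla v(\id)| \geq 1/|S|$, and suppose that $\sup_{x \in B_S(R)}|v(x)| \leq \eta R$ for a threshold $\eta$ of the shape $\eta = c_0 D^{-C_0} R_0^{-\kappa}$, where $D := \dim \overline V = O(|S|)^{O(d^3/\kappa^2)}$ is the dimension furnished by Proposition \ref{bkts} and $c_0,C_0$ are absolute constants to be chosen; we aim to contradict $|\nabla v(\id)| \geq 1/|S|$. The engine is the reverse Poincar\'e inequality (Lemma \ref{rev}): applied to $v$ on a ball $B_S(r)$ with $r \asymp R$ (and $2r \leq R$, so that the hypothesis on $\sup_{B_S(R)}|v|$ still controls $\|v\|_{\ell^2(B_S(2r))}$ up to the harmless Lipschitz error), it yields the \emph{upper} bound $\|\nabla v\|_{\ell^2(B_S(r))} \ll |S|^{O(1)}(\tfrac1r \|v\|_{\ell^2(B_S(2r))} + R_0^{-K}r|B_S(2r)|^{1/2}) \ll |S|^{O(1)}\eta\,|B_S(2r)|^{1/2}$, the $R_0^{-K}$-term being utterly negligible under \eqref{rbit}.

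The crux is to oppose this with a matching \emph{lower} bound $\|\nabla v\|_{\ell^2(B_S(r))} \gg D^{-O(1)}R_0^{-O(\kappa)}|B_S(r)|^{1/2}$, i.e.\ to show that $|\nabla v|$ is $\gg D^{-O(1)}R_0^{-O(\kappa)}$ not just at the identity but on a subset of $B_S(r)$ of density $\gg D^{-O(1)}R_0^{-O(\kappa)}$. Here the structural output of Sections \ref{kleinersec-3}--\ref{kleinersec-4} is used: the matrices $U_g$ of Proposition \ref{perp} have operator norm $O(D^{O(1)})$ and are approximately invertible on $\overline V$ (estimates \eqref{crude}, \eqref{guo}), so each translate $\rho(g)v$ for $g \in B_S(R_0^{1-5\kappa}/2)$ stays, modulo the constants, within a factor $D^{O(1)}$ of $v$ in the $\overline{\ell^2(B_S(R_1))}$ geometry and cannot degenerate; using the good scale $R_1$ and the translation-stability \eqref{uinv} to pass between $\ell^2$ norms on nearby balls without loss, one transfers the non-degeneracy of $v$ at $\id$ into a lower bound on $|\nabla(\rho(g)v)(\id)| = |\nabla v(g^{-1})|$ for $g$ in a set of density $\gg D^{-O(1)}R_0^{-O(\kappa)}$. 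The fact that harmonic Lipschitz functions are (approximately) $(G')^{(l+1)}$-invariant and behave affinely along $(G')^{(l)}/(G')^{(l+1)}$-directions (Theorem \ref{trivdir} and its proof) is what allows the single lower bound at $\id$ to propagate to such a large set, rather than only to the $\langle (G')^{(l+1)}\rangle$-orbit of $\id$, which could be small. Comparing the two bounds forces $\eta \gg D^{-O(1)}R_0^{-O(\kappa)}$; a pigeonhole choice of the scale $r$ within the interval $[R_0^{1-5\kappa}/4,\, R/2]$ (exploiting that $|B_S|$ has total multiplicative doubling at most $R_0^{d}$ across all scales) absorbs the ratio $|B_S(2r)|/|B_S(R)|$ into an extra $R_0^{O(\kappa)}$ factor, giving the stated bound \eqref{super}.

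The main obstacle is exactly this density lower bound for $\nabla v$: proving that the gradient of a non-degenerate harmonic Lipschitz function cannot be concentrated on a tiny fraction of $B_S(R)$, with the quantitative spreading $\gg D^{-O(1)}R_0^{-O(\kappa)}$. The reverse Poincar\'e upper bound, the control of all $R_0^{-K}$-error terms, and the final bookkeeping of exponents are routine; the genuine content lies in combining the (a priori possibly trivial) invariance subgroup $(G')^{(l+1)}$, the affine behaviour along the last solvable layer, and the bounded approximate representation on $\overline V$ to force $|\nabla v|$ to be substantial on a positive proportion of the ball.
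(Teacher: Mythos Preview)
Your overall scaffold (assume the sup is small, apply reverse Poincar\'e to get $\|\nabla v\|_{\ell^2(B_S(r))}$ small, and seek a contradiction) matches the paper, but the step you flag as the ``main obstacle'' is in fact a genuine gap, and the paper does \emph{not} fill it the way you suggest. You want a density lower bound on $|\nabla v|$ over $B_S(r)$, and you propose to extract it from the approximate representation $U_g$ of Section~\ref{kleinersec-3} together with the $(G')^{(l+1)}$-invariance of Theorem~\ref{trivdir}. But that machinery controls translates $\rho(g)v$ only in the $\overline{\ell^2(B_S(R_1))}$ geometry, which is far too coarse to pin down the \emph{pointwise} value $|\nabla v(g^{-1})|$ at a specific $g$; knowing that $\rho(g)v$ is $\ell^2$-close to some element of $\overline V$ of comparable norm says nothing about its gradient at $\id$. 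Likewise, Theorem~\ref{trivdir} gives approximate \emph{constancy} of $u$ along $(G')^{(l+1)}$-directions, which if anything pushes the gradient toward zero, not away from it; the ``affine behaviour along the last solvable layer'' you invoke is not established anywhere with quantitative control over the slope. So the proposal, as written, does not close.

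The paper's argument avoids the density lower bound entirely by a linear-independence trick. It uses only Theorem~\ref{qkt} (not the material of Sections~\ref{kleinersec-3}--\ref{kleinersec-4}): pick $D+1$ random points $g_1,\dots,g_{D+1}$ in $B_S(R'/10)$; by the reverse Poincar\'e upper bound and Chebyshev, with positive probability one has $|\nabla(\rho(g_i)u)(g_j)|$ small for all $i\neq j$, while $|\nabla(\rho(g_i)u)(g_i)|=|\nabla u(\id)|\geq 1/|S|$. Replacing each $\rho(g_i)u$ by its approximant $v_{g_i}\in V$ (Theorem~\ref{qkt}), the matrix $(\nabla v_{g_i}(g_j))_{i,j}$ is diagonally dominant, forcing $v_{g_1},\dots,v_{g_{D+1}}$ to be linearly independent---impossible in the $D$-dimensional space $V$. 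This contradiction bounds $\alpha$ from below and gives \eqref{super} directly. The point is that finite-dimensionality is exploited via a rank argument on $D+1$ translates, not via any spreading of $|\nabla v|$.
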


\begin{proof}  Our main tool here will be the quantitative Kleiner theorem (Theorem \ref{qkt}). This theorem gives us a space $V \subset \R^G$ of dimension
$$ D:= \dim(V) = O(|S|)^{O(d^3/\kappa^2)}$$
such that for every $g \in G$, there exists $v_g \in V$ such that
$$ \| \rho(g) u- v_g \|_{\ell^2(B_S(R_0^{1-\kappa}))} \leq R_0^{-100d}$$
and in particular
\begin{equation}\label{Rhogio}
 \| \rho(g) \nabla u - \nabla v_g \|_{\ell^2(B_S(R_0^{1-\kappa}))} \leq |S| R_0^{-100d}.
\end{equation}

The balls $B_S(R')$ for $R_0^{-\kappa} R \leq R' \leq R$ are increasing in $R'$, and have cardinality between $1$ and $R_0^d$.  By the pigeonhole principle, one may thus find a radius $2R_0^{-\kappa} R \leq R' \leq R$ such that
\begin{equation}\label{nob}
|B_S(R')| \ll O(1)^{d/\kappa} |B_S(R'/10)|.
\end{equation}

Fix this $R'$.  By subtracting off a constant, we may assume $u(\id)=0$.  Write $\alpha := R' R_0^{-K} + R^{-1} \sup_{x \in B_S(R')} |u(x)|$, then
$$ \sum_{x \in B_S(R')} |u(x)|^2 \leq (R')^2 \alpha^2 |B_S(R')|.$$
Applying Proposition \ref{rev} we conclude that
\begin{equation}\label{nab}
 \| \nabla u \|_{\ell^2(B_S(R'/2))} \ll |S|^{O(1)} \alpha |B_S(R')|^{1/2}.
\end{equation}

Now, let $g_1,\ldots,g_{D+1}$ be chosen uniformly at random from $B_S(R'/10)$.  From \eqref{nob}, \eqref{nab}, and Chebyshev's inequality we see that for each distinct $1 \leq i,j \leq D+1$, we have
\begin{equation}\label{rhogi}
|\rho(g_i) \nabla(u)(g_j)| \leq \frac{1}{100 |S| D}
\end{equation}
with probability at least $1 - O(|S|)^{O(d^3/\kappa^2)} \alpha^2$.  By the union bound, we thus have \eqref{rhogi} for all distinct $1 \leq i,j \leq D+1$ with probability at least $1 - O(|S|)^{O(d^3/\kappa^2)} \alpha^2$.  Meanwhile, for $i=j$ we have
$$
|\rho(g_i) \nabla(u)(g_j)| \geq 1/|S|$$
by hypothesis.  Applying \eqref{rhogi} we see that
$$
|\nabla(v_{g_i})(g_j)| \leq \frac{1}{200 |S| D}$$
for $i \neq j$ and
$$
|\nabla(v_{g_i})(g_j)| \geq \frac{1}{2 |S|}$$
for $i=j$.  The matrix $(\nabla(v_{g_i})(g_j))_{1 \leq i,j \leq D+1}$ is then diagonally dominant and thus invertible; however, the $v_{g_1},\ldots,v_{g_{D+1}}$ lie in a $D$-dimensional space and thus must have a linear dependence.  This leads to a contradiction unless the stated event occurs with zero probability; this forces $\alpha \gg O(|S|)^{-O(d^3/\kappa^2)}$, and the claim follows.
\end{proof}

We can combine this with Theorem \ref{trivdir} and Proposition \ref{infilip} to obtain a crucial reduction in growth order, from $d$ to approximately $d-1$:

\begin{corollary}[Reduction in growth order]\label{trivcor}  Let $0 < \kappa < 0.1$, $R_0, d \geq 1$, and let $(G,S)$ be a $(R_0,d)$-growth group.  Assume the bounds \eqref{kappa-small}, \eqref{r0-big} for some sufficiently large $C$.  Then at least one of the following holds:
\begin{itemize}
\item $G = B_S(R_0^{\exp(\exp(\kappa^{-O(1)}))})$.
\item There exists a $(R_0^{1-\kappa^{10}},d/(1-\kappa^{10}))$-growth subgroup $(G',S')$ of $G$ of index at most $R_0^{\kappa^{10}}$ and a positive integer $l = O(\kappa^{-6})$ such that $(G')^{(l+1)}$ is generated by a set $(S')^{(l+1)} \subset B_{S'}(R_0^{2\kappa})$ for which $( (G')^{(l+1)}, (S')^{(l+1)})$ is a $(R_0^{1-4\kappa}, (d-1+6\kappa)/(1-4\kappa))$-growth group.
\end{itemize}
\end{corollary}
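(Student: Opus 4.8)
The plan is to assemble the three main outputs of the preceding sections -- Proposition~\ref{infilip} (existence of a non-trivial almost-harmonic Lipschitz function), Theorem~\ref{trivdir} (``many trivial directions''), and Proposition~\ref{lorrange} (lower bound on the range) -- by a packing argument: a Lipschitz function that oscillates by $\gg R$ on $B_S(R)$ but is essentially constant along an entire subgroup $H$ forces many disjoint translates of a ball of $H$ to fit inside a ball of $G$, hence $H$ loses a full unit in its growth exponent.

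First I would manufacture the harmonic function. Apply Proposition~\ref{infilip} at the scale $R_*:=(C|S|)^{3}R_0^{3K}$, where $K$ is the quantity in \eqref{rbit2}; under \eqref{kappa-small} one has $K=(C|S|)^{Cd^{3}/\kappa^{3}}=\exp(\kappa^{-O(1)})$, so $R_*\le R_0^{\exp(\kappa^{-O(1)})}\le R_0^{\exp(\exp(\kappa^{-O(1)}))}$, and this choice is made precisely so that an $O(|S|R_*^{-1/3})$-harmonic Lipschitz function is automatically $R_0^{-K}$-harmonic. If the first alternative of Proposition~\ref{infilip} occurs, then $G=B_S(R_*)$ and the first alternative of the corollary holds. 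Otherwise we obtain $u:G\to\R$ that is $R_0^{-K}$-harmonic Lipschitz with $|\nabla u(\id)|\ge 1/|S|$. Since the hypotheses \eqref{kappa-small}, \eqref{r0-big} are exactly those of Theorem~\ref{trivdir} (and \eqref{r0-big} comfortably forces \eqref{rbit}, \eqref{rbit2}), that theorem hands us a subgroup $(G',S')$ of index at most $R_0^{\kappa^{10}}$ which is a $(R_0^{1-\kappa^{10}},d/(1-\kappa^{10}))$-growth subgroup of $G$, an integer $l=O(\kappa^{-6})$, and a symmetric generating set $(S')^{(l+1)}\subset B_{S'}(R_0^{2\kappa})$ of $(G')^{(l+1)}$ satisfying
$$
u(ex)=u(x)+O(R_0^{-50d})\qquad\text{for all }e\in(S')^{(l+1)},\ x\in B_S(R_0^{1-3\kappa}).
$$
It then only remains to verify the growth estimate for $H:=(G')^{(l+1)}$ with the generating set $(S')^{(l+1)}$.

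For this I would run the packing argument. Since $S'\subset B_S(R_0^{\kappa^{10}})$, every element of $(S')^{(l+1)}$ has $S$-length at most $R_0^{2\kappa+\kappa^{10}}\le R_0^{3\kappa}$, so $B_H(\rho)\subset B_S(\rho R_0^{3\kappa})$ for every $\rho\ge 1$. Fix a scale $R\in[R_0^{1-5\kappa},R_0^{1-2\kappa}]$ of the form $R_0^{1-O(\kappa)}$, chosen as large as the constraints below allow, and a radius $\rho=R_0^{1-O(\kappa)}$ small enough that $R+\rho R_0^{3\kappa}\le R_0^{1-3\kappa}$. By Proposition~\ref{lorrange}, $\sup_{x\in B_S(R)}|u(x)-u(\id)|\gg O(|S|)^{-O(d^{3}/\kappa^{2})}R_0^{-\kappa}R$; since $u$ is $1$-Lipschitz it varies by at most $1$ across each edge of the Cayley graph, so along a geodesic from $\id$ to an extremal point we may pick $N\gg O(|S|)^{-O(d^{3}/\kappa^{2})}R_0^{-\kappa}R$ elements $x_1,\dots,x_N\in B_S(R)$ with $|u(x_i)-u(x_j)|\ge 1$ for $i\ne j$. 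For $h=e_1\cdots e_n\in B_H(\rho)$ (so $n\le\rho$ and $e_j\in(S')^{(l+1)}$) and any $i$, peeling off the $e_j$ from the left one at a time -- each intermediate element $e_j\cdots e_n x_i$ has $S$-length at most $\rho R_0^{3\kappa}+R\le R_0^{1-3\kappa}$, so the displayed invariance applies -- yields $u(hx_i)=u(x_i)+O(\rho R_0^{-50d})=u(x_i)+O(R_0^{-49d})$. Hence on each set $B_H(\rho)\cdot x_i$ the function $u$ stays within $O(R_0^{-49d})<1/2$ of $u(x_i)$, so these $N$ sets are pairwise disjoint, and they all lie in $B_S(R+\rho R_0^{3\kappa})\subset B_S(R_0^{1-3\kappa})$. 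As $(G,S)$ is an $(R_0,d)$-growth group, $|B_S(R_0^{1-3\kappa})|\le|B_S(R_0)|\le R_0^{d}$, so $N\cdot|B_H(\rho)|\le R_0^{d}$ and therefore
$$
|B_{(S')^{(l+1)}}(\rho)|\;\le\;\frac{R_0^{d}}{N}\;\ll\;O(|S|)^{O(d^{3}/\kappa^{2})}\,R_0^{\,d-1+\kappa}\,R^{-1}.
$$
Using \eqref{r0-big} to absorb the bounded factor $O(|S|)^{O(d^{3}/\kappa^{2})}=\exp(\kappa^{-O(1)})\le R_0^{\kappa}$ into a tiny worsening of the exponent, this reads $|B_{(S')^{(l+1)}}(R_0^{1-O(\kappa)})|\le R_0^{\,d-1+O(\kappa)}$; choosing the implied constants carefully -- replacing $\kappa$ throughout by a fixed multiple of itself if necessary -- gives exactly the asserted $(R_0^{1-4\kappa},(d-1+6\kappa)/(1-4\kappa))$-growth conclusion, while the growth estimate and index bound for $(G',S')$ and the bound $l=O(\kappa^{-6})$ are inherited verbatim from Theorem~\ref{trivdir}.

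The delicate point -- the one I expect to be the main obstacle -- is the simultaneous choice of $R$ and $\rho$: $R$ must be taken as close to $R_0^{1-3\kappa}$ as the arithmetic permits, so that Proposition~\ref{lorrange} contributes the full ``$-1$'' to the exponent, yet $\rho$ must still reach a power of the shape $R_0^{1-4\kappa}$ even though an $H$-ball of radius $\rho$ swells by a factor $R_0^{O(\kappa)}$ when passed to the $S$-metric, and the translates $B_H(\rho)\cdot x_i$ must stay inside the window $B_S(R_0^{1-3\kappa})$ on which Theorem~\ref{trivdir} supplies the invariance. Balancing these competing $\kappa$-budgets is what pins down the precise exponents $1-4\kappa$ and $(d-1+6\kappa)/(1-4\kappa)$. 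One must also check that the error accumulated over the $O(\rho)$ applications of the invariance -- each of size $O(R_0^{-50d})$ -- remains far below the separation $1$ between the values $u(x_i)$; this is immediate from the enormous lower bound \eqref{r0-big} on $R_0$. Beyond this bookkeeping, the argument uses nothing past Propositions~\ref{infilip} and \ref{lorrange} and Theorem~\ref{trivdir}.
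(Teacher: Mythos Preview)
Your proof is correct and follows essentially the same path as the paper: invoke Proposition~\ref{infilip} for the dichotomy, Theorem~\ref{trivdir} for the almost-invariance along $(G')^{(l+1)}$, Proposition~\ref{lorrange} for the range lower bound, and then pack disjoint translates of an $H$-ball inside a ball of $G$ to extract the unit drop in the growth exponent. (There is a harmless arithmetic slip in your displayed bound $R_0^{d-1+\kappa}R^{-1}$, which should read $R_0^{d+\kappa}R^{-1}$; the sentence following it already draws the correct conclusion once $R=R_0^{1-O(\kappa)}$ is substituted.)
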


\begin{proof}  Applying Proposition \ref{infilip} we see that either the first conclusion holds, or there exists a $R_0^{-K}$-harmonic Lispchitz function with $|\nabla u(\id)| \geq 1/|S|$, where $K$ is defined by \eqref{rbit}. Applying Theorem \ref{trivdir} we can ensure that \eqref{ughr-2} holds for this value of $u$; iterating this we see in particular that
\begin{equation}\label{ughr-3}
u(gx) = u(x) + O( R_0^{-40d})
\end{equation}
for all $e \in B_{(S')^{(l+1)}}(R_0^{1-4\kappa})$, $x \in B_S(R_0^{1-4\kappa})$.

By Proposition \ref{lorrange} (and cleaning up the constants) we have
$$ \sup_{x \in B_S(R_0^{1-4\kappa})} |u(x)-u(\id)| \geq 10 R_0^{1-6\kappa}$$
and thus there exists a path of length at most $R_0^{1-4\kappa}$ starting at the origin on which $u$ fluctuates by at least $\geq 10 R_0^{1-6\kappa}$.  Since $u$ also has Lipschitz constant at least $1$, we can thus find $M > R_0^{1-6\kappa}$ points $x_1,\ldots,x_M \in B_S(R_0^{1-4\kappa})$ such that $u(x_1),\ldots,u(x_M)$ all differ by at least $1$.  Combining this with \eqref{ughr-3} we see that the sets $B_{(S')^{(l+1)}}(R_0^{1-4\kappa}) \cdot x_m$ for $1 \leq m \leq M$ are disjoint.  But these sets all lie in $B_S(R_0)$, which has cardinality at most $R_0^d$.  We conclude that
$$ B_{(S')^{(l+1)}}(R_0^{1-4\kappa}) \leq R_0^d / M = R_0^{d-1+6\kappa}$$
and the claim follows.
\end{proof}

Finally, we can prove Proposition \ref{trivcor2}.

\begin{proof}[Proof of Proposition \ref{trivcor2}]  Applying Lemma \ref{gen-red} with $\kappa = \frac{c}{d}$ for some sufficiently small absolute constant $c>0$, one can find a $(R_0^{c/d}, R_0^{c/d})$-subgroup $(\tilde G,\tilde S)$ of $G$ which is a $(R_0^{1-\frac{1}{100d}}, d-0.01)$-growth group and with $|\tilde S| \ll O(1)^{d^2}$ and $G = \tilde G \cdot B_S( R_0^{\frac{1}{100d}})$.  We then apply Corollary \ref{trivcor} to $(\tilde G, \tilde S)$ with $\kappa := \frac{c}{d^2}$ for some sufficiently small absolute constant $c>0$.  If the first conclusion of Corollary \ref{trivcor} holds, then the properties relating $(G,S)$ to $(\tilde G,\tilde S)$ imply that $G = B_{S}(R_0^{\exp(\exp(O(d)^{O(1)}))})$.  If the second conclusion of Corollary \ref{trivcor} holds, then the claim follows after substituting in the value of $\kappa$ (and replacing $l$ by $l+1$), and using Lemma \ref{trans}.  (Observe from Lemma \ref{index} that $G'$ is a $(R_0^{\kappa^{10}},R_0^{\kappa^{10}},1)$-subgroup of $G$.)
\end{proof}

\section{First step for Proposition \ref{qmw}: reduction to a cyclic base}\label{wolfsec-1}

We now begin the proof of Proposition \ref{qmw}.  In this section we execute the first step of this proof, which is to reduce to the case when the base group $L$ is cyclic rather than solvable.

By replacing $S$ with $S \cup S_H$, replacing $R_0$ by $R_0^{9/10}$, and adjusting $d$ and $C$ appropriately, we may assume that $S_H \subset S$ in Proposition \ref{qmw}, thus $(H,S_H)$ is now a $(\infty,1)$-subgroup of $(G,S)$.

We now claim that Proposition \ref{qmw} follows from the $l=1$ case of this proposition, and specifically from

\begin{proposition}[Semi-quantitative Milnor-Wolf theorem, first reduction]\label{qmw1}  Let $r, R_0, d \geq 1$, and suppose that
$$ R_0 \geq C^d r^C$$
for some sufficiently large absolute constant $C$.  Suppose we have a short exact sequence
$$ 0 \to H \to G \to A$$ 
of groups, where $G = (G,S)$ is a $(R_0,d)$-growth group, $(H,S_H)$ is a virtually $r$-nilpotent $(\infty,1)$-subgroup of $(G,S)$, and $A$ is abelian.  Then $(G,S)$ is virtually $r+O(1)^d$-nilpotent.
\end{proposition}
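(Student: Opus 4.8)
The plan is to prove Proposition~\ref{qmw1} in two movements: first reduce the abelian base $A$ to the infinite cyclic case, and then treat that case by the classical Milnor--Wolf analysis made quantitative via effective lower bounds for Mahler measures.

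\emph{Reduction to $A\cong\Z$.} Since $G$ is finitely generated, so is $A$; write $A\cong\Z^k\times F$ with $F$ finite, and note that the growth hypothesis at scale $R_0$ forces $k=O(d)$ and $|F|$ to be controlled (a $\Z^k$ inside $G$ has growth of degree $\geq k$). Argue by induction on $k$. For $k=0$ the map $G\to A$ has finite image, so $H$ has finite index in $G$ and $G$ is virtually $r$-nilpotent directly. For $k\geq 1$, split $A=A_1\times\Z$, let $G_1$ be the preimage of $A_1$ in $G$, so that $0\to H\to G_1\to A_1\to 0$ and $0\to G_1\to G\to\Z\to 0$; here one uses $[G,G]\subseteq H$ (as $A$ is abelian) together with the generator-control lemmas of the previous section (Lemmas~\ref{gen-red}--\ref{index} and~\ref{trans}) to endow $G_1$ with a bounded generating set so that it is again a growth group at a slightly smaller scale, with $H$ still a virtually $r$-nilpotent $(\infty,R)$-subgroup of $G_1$. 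The induction hypothesis then makes $G_1$ virtually $(r+O(1)^d)$-nilpotent, and $0\to G_1\to G\to\Z\to 0$ is an instance of the cyclic-base case.

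\emph{The cyclic base.} Here $0\to H\to G\to\Z\to 0$ splits (the quotient is free), so $G=H\rtimes_\phi\Z$ with $\phi$ conjugation by a generator $t$. Following Milnor, $H$ has a characteristic finite-index torsion-free nilpotent subgroup $H_0$ of Hirsch length $\leq r$ (kill the finite torsion subgroup of $H$, then intersect the finitely many subgroups of the relevant index); it is automatically $\phi$-invariant and normal in $G$, so up to a finite-index passage $G$ is the polycyclic group $H_0\rtimes_\phi\Z$. The automorphism $\phi$ acts on each lower-central quotient $\gamma_i(H_0)/\gamma_{i+1}(H_0)\cong\Z^{m_i}$ (with $\sum_i m_i\leq r$) as an integer matrix $M_i\in GL_{m_i}(\Z)$; since $\gamma_i/\gamma_{i+1}$ is a $\phi$-equivariant quotient of $\bigotimes^i(H_0^{\mathrm{ab}}/\mathrm{tors})$, the eigenvalues of $M_i$ are products of $i$ eigenvalues of $M_1$, so it suffices to treat $M_1$. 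The key claim is that $M_1$ is virtually unipotent. If not, its monic integer characteristic polynomial, of degree $m_1\leq r$, has a root $\lambda$ that is not a root of unity; by Kronecker's theorem $|\lambda|>1$, and by an effective Mahler-measure bound (Blanksby--Montgomery, or Dobrowolski) some eigenvalue of $M_1$ has modulus $\geq 1+\delta$ with $\delta\geq r^{-O(1)}$. A standard Milnor-type count then gives $(1+\delta)^{\Omega(n)}$ distinct elements in the $O(n^2)$-ball of $\Z^{m_1}\rtimes_{M_1}\Z$; pulling these back into $G$ and taking $n\asymp R_0^{1/2}$ yields $|B_S(R_0)|\geq(1+\delta)^{\Omega(R_0^{1/2})}>R_0^d$, contradicting the hypothesis once $R_0\geq C^dr^C$ with $C$ large enough (absolute). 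Hence all $M_i$ are virtually unipotent, a common power $\phi^N$ acts unipotently on every graded piece of $H_0$, $H_0\rtimes_{\phi^N}\Z$ is nilpotent of Hirsch length $\leq r+1$ and of finite index in $G$, and $G$ is virtually $(r+1)$-nilpotent; the claimed bound $r+O(1)^d$ generously absorbs the losses from the rank-$k$ induction and the various (uncontrolled but finite) index passages.

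\emph{Main obstacle.} The hard point is exactly the one flagged in the introduction: the finite-index torsion-free nilpotent subgroup $H_0\leq H$, and hence the matrices $M_i$ it carries, may sit at an index far exceeding $R_0$, so they are not directly ``visible'' inside $B_S(R_0)$, and the naive move of replacing $G$ by $H_0\rtimes_\phi\Z$ destroys the single-scale growth hypothesis. Circumventing this requires detecting the non-unipotent behaviour of $\phi$ on subquotients of $G$ that \emph{are} visible at scale $R_0$ (quotients of $G$ by characteristic subgroups of $H$, on which the generating set keeps bounded word length), and then transporting that information back to $H_0$ using the \emph{global} hypothesis that $H$ is virtually $r$-nilpotent (which controls the growth of $H$ at all scales), rather than anything confined to $B_S(R_0)$. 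Making the bookkeeping of word lengths, indices and Hirsch lengths cooperate through this detour, while keeping the exponent in $R_0\geq C^dr^C$ absolute, is where the real work lies.
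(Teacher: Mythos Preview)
Your reduction to the cyclic base is close in spirit to the paper's (the paper applies Lemma~\ref{gen-red} to $A$ to cut the number of generators to $O(1)^d$ and then peels them off one by one via Proposition~\ref{qmw2}), though your claim that the free rank $k$ of $A$ is $O(d)$ from a single-scale bound needs more care than you give it; the paper sidesteps this by working with a bounded generating set for a finite-index subgroup of $A$ rather than with the rank itself.

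The real gap is the one you yourself flag in your final paragraph, and you have not closed it. Your Milnor count produces $(1+\delta)^{\Omega(n)}$ distinct elements in $\Z^{m_1}\rtimes_{M_1}\Z$, but ``pulling these back into $G$'' is exactly what fails: the torsion-free subgroup $H_0$ and its abelianization carry a word metric that is only ineffectively comparable to the $S$-metric, so you have no way to guarantee that those many elements lie in $B_S(R_0)$. Your suggested detour (detect non-unipotency on ``visible'' subquotients and transport back) is a description of the problem, not a solution.

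The paper's fix is a specific new ingredient you are missing: Proposition~\ref{slowg}. By a pigeonhole on the sizes of $B_S(r)\cap H$ at scales $r\leq R_0^{1/10}$, one manufactures a new generating set $\tilde S\subset B_S(R_0^{1/10})\cap H$ for $H$ with the all-scales bound
\[
\|T^n h\|_{\tilde S}\ \ll\ \exp(|n|/R_0^{0.01})\,\|h\|_{\tilde S}\qquad(n\in\Z,\ h\in H),
\]
extracted \emph{purely} from the single-scale hypothesis on $G$. This decouples the two halves of the argument: the quantitative input (Dobrowolski's $|\lambda|\geq 1+r^{-O(1)}$) now contradicts the displayed inequality for $n$ sufficiently large, where ``sufficiently large'' is allowed to depend on the ineffective constant $M$ coming from the finite-index comparison $\|h\|_{S'''}\asymp_M\|h\|_{\tilde S}$ on the torsion-free subgroup. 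In other words, the slow-growth proposition lets you run your Mahler-measure argument at scales far beyond $R_0$, which is precisely what is needed since $H_0$ is invisible at scale $R_0$. Without this bridge your cyclic-base argument does not go through.
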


To see how Proposition \ref{qmw1} implies Proposition \ref{qmw}, we induct on $l$.  The case $l=1$ already follows from Proposition \ref{qmw1}, so suppose that $l \geq 2$ and the claim has already been proven for $l-1$.  

Let $\kappa := \frac{1}{100 dl}$, then by Lemma \ref{zonk} we may find a set of generators $S' \subset B_S(R_0^\kappa) \cap [G,G]$ for $[G,G]$.  If we then set $\tilde G := \langle [G,G], H \rangle$ and $\tilde S := S_H \cup S' \subset B_S(R_0^\kappa)$, we see that $(\tilde G, \tilde S)$ is a $(R_0^{1-\kappa}, d/(1-\kappa))$-growth group, and we have the short exact sequence
$$ 0 \to H \to \tilde G \to [N,N] \to 0.$$
Of course, $[N,N]$ is solvable of derived length at most $l-1$, so by induction hypothesis $(\tilde G,\tilde S)$ is virtually $r+(l-1) O(1)^d$-nilpotent.  (Note that even after iterating this induction hypothesis $l$ times, the order of growth $d$ of $\tilde G$ does not increase significantly, thanks to the choice of $\kappa$, so for the purposes of computing quantitative bounds one can treat $d$ as constant throughout this iteration.)  On the other hand, $G/\tilde G$ is abelian.  If we then apply Proposition \ref{qmw1} to the short exact sequence
$$ 0 \to \tilde G \to G \to G/\tilde G \to 0$$
we obtain the claim (for $C$ large enough).

Now suppose we are in the situation of Proposition \ref{qmw1}.  At present, the abelian group $(A,S_A)$ could have many generators; but we can cut down the number of generators to a quantity depending on $d$.  Indeed, as $G = (G,S)$ is an $(R_0,d)$-growth group, we see that $(A, S_A)$ is also an $(R_0,d)$-growth group, where $S_A$ is the projection of $S$ to $A$.  Applying Lemma \ref{gen-red}, we may find a $(R_0^{0.9}, d/0.9)$-growth $(R_0^{0.1}, R_0^{0.1})$-subgroup $(A', S_{A'})$ of $(A,S)$ with $|S_{A'}| \ll O(1)^d$.  Let $G'$ be the preimage of $A'$ in $G$, and let $S' := B_S(R_0^{0.1}) \cap A'$, then $(G',S')$ is a $(R_0^{0.1},R_0^{0.1})$-subgroup of $(G,S)$.  We may thus replace $G$, $A$, $S$ by $G'$, $A'$, $S'$ in Proposition \ref{qmw1} (adjusting $d$ and $C$ slightly), thus allowing us to reduce to the case when $A$ is generated by at most $m = O(1)^d$ elements $a_1,\ldots,a_m$, and furthermore that we may assume that elements are contained in the projection of $S$ to $A$.

As $(H,S_H)$ is a $(\infty,1)$-subgroup of $(G,S)$, $S_H$ is contained in $S$.  We can then discard all elements of $S$ other than those in $S_H$ and those that are projecting to $A$, and thus assume that $S$ takes the form
$$ S = S_H \cup \{ e_1,\ldots,e_m,e_1^{-1},\ldots,e_m^{-1}\}$$
where $e_1,\ldots,e_m \in G$ projects to the generators $a_1,\ldots,a_m$ of $A$.

An easy induction on $m$ then allows us to reduce to the one-dimensional case $m=1$, and specifically from

\begin{proposition}[Semi-quantitative Milnor-Wolf theorem, second reduction]\label{qmw2}  Let $r, R_0, d \geq 1$, and suppose that
$$ R_0 \geq C^d r^C$$
for some sufficiently large absolute constant $C$.  Suppose we have a short exact sequence
$$ 0 \to H \to G \to A$$ 
of groups, where $(G,S)$ is a $(R_0,d)$-growth group, $(H,S_H)$ is a virtually $r$-nilpotent subgroup, and $A$ is cyclic.  Suppose also that $S = S_H \cup \{e,e^{-1}\}$ for some $e \in G$.  Then $G$ is virtually $r+1$-nilpotent.
\end{proposition}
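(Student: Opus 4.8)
The plan is to run the Milnor--Wolf argument, keeping the growth estimates quantitative. Write $\phi\in\operatorname{Aut}(H)$ for conjugation by $e$; this is well defined because $H$ is normal in $G$, being the kernel of $G\to A$. First I would dispose of the case where $A$ is finite: then $H$ has finite index in $G$, so $G$ is itself virtually $r$-nilpotent, hence virtually $(r+1)$-nilpotent, and we are done. So assume $A=\Z$, i.e. $e$ has infinite order modulo $H$. Next I would produce an $e$-invariant nilpotent subgroup of $H$: take an $r$-nilpotent finite-index subgroup $N\le H$, replace $N$ by its normal core in $H$ (which only improves the nilpotency class and leaves the Hirsch length $\le r$), and set $\tilde N:=\bigcap_{k\in\Z}e^kNe^{-k}$. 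Since $H$ is finitely generated it has only finitely many subgroups of index $[H:N]$, so this is a \emph{finite} intersection; hence $\tilde N$ has finite index in $H$, is normal in $H$, is $\phi$-invariant, and is nilpotent of Hirsch length $\le r$. Being normalized by $H$ and by $e$, it is normal in $G=\langle H,e\rangle$, and $G_1:=\langle\tilde N,e\rangle=\tilde N\rtimes_\phi\Z$ has finite index in $G$.

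Now comes the dichotomy. Let $W:=\tilde N^{\mathrm{ab}}/(\text{torsion})\cong\Z^{s}$ with $s\le r$, and let $\alpha\in GL_s(\Z)$ be the automorphism induced by $\phi$. \emph{Case (a): all eigenvalues of $\alpha$ are roots of unity} (equivalently, by Kronecker's theorem, they all lie on the unit circle). Since the eigenvalues of $\phi$ on each lower-central-series quotient $\tilde N_i/\tilde N_{i+1}$ are $i$-fold products of eigenvalues of $\alpha$, they are roots of unity as well, so for a suitable $k_0$ depending only on $r$ the automorphism $\phi^{k_0}$ acts unipotently on every graded piece, and hence also on the torsion-free nilpotent group $M:=\tilde N/T$, where $T=T(\tilde N)$ is the finite characteristic (hence $G$-normal) torsion subgroup. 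It is classical (this is what Wolf does in \cite{wolf}) that a unipotent automorphism of a finitely generated torsion-free nilpotent group yields a nilpotent semidirect product with $\Z$; hence $G_2:=M\rtimes_{\phi^{k_0}}\Z$ is nilpotent of Hirsch length at most $r+1$. Since $G_2$ is the quotient by $T$ of $P:=\langle\tilde N,e^{k_0}\rangle$, a finite-index subgroup of $G$, the group $P$ is finite-by-nilpotent; then $C:=C_P(T)$ has finite index in $P$, satisfies $T\cap C=Z(T)\subseteq Z(C)$ and $C/(T\cap C)\hookrightarrow G_2$, so $C$ is (central)-by-nilpotent, hence nilpotent, of Hirsch length at most $r+1$. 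Thus $C$ is a finite-index $(r+1)$-nilpotent subgroup of $G$, and the proposition holds in this case.

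\emph{Case (b): $\alpha$ is not virtually unipotent.} Then $\alpha$ has an eigenvalue $\lambda$ with $|\lambda|>1$, and the Mahler measure $\mu$ of its characteristic polynomial exceeds $1$; since that polynomial is monic of degree $\le r$ with constant term $\pm1$, an explicit lower bound for Mahler measures in terms of the degree (Blanksby--Montgomery or Dobrowolski) gives $\mu\ge 1+c(r)$ with $c(r)$ only (at worst) polynomially small in $r$. The goal is to contradict the $(R_0,d)$-growth hypothesis. This is exactly where the single-scale setting bites: $\tilde N$ may sit inside $H$ with astronomically large index, so its elements need not be reachable in few $S$-steps, and one cannot simply invoke the exponential growth of $G_1$ as a subgroup of $G$. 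Instead one must realize exponentially many group elements with $S$-word-length directly comparable to $R_0$. The mechanism is Milnor's: for a generator $h_0$ whose image in $W\otimes\Q$ has nonzero component along $\lambda$, the ``Horner'' products $\prod_{j<N}\phi^j(h_0)^{\eps_j}$ ($\eps\in\{0,1\}^N$) have $S$-length $O(N)$ --- although $\phi^j(h_0)=e^jh_0e^{-j}$ naively costs $2j+1$ letters, the nested rewriting $e(\cdots)e^{-1}h_0^{\eps_0}$ keeps the total linear in $N$ --- while their images in $W$ are the vectors $\sum_{j<N}\eps_j\alpha^jv_0$, of which there are $\gg\mu^{cN}$ distinct ones. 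Taking $N\asymp R_0$ forces $|B_S(R_0)|\gg\mu^{cR_0}>R_0^d$ once $R_0\ge C^dr^C$ (using that $c(r)$ is only polynomially small), a contradiction; so Case (b) cannot occur. (One technical wrinkle is that $h_0$ naturally lives in $H$ while the useful abelian quotient was attached to $\tilde N$; the honest fix, carried out over Sections~\ref{wolfsec-1}--\ref{wolfsec-4}, is to induct on the nilpotency class so that the growth estimate is ultimately performed on an abelian quotient of an $S$-visible subgroup of $H$ on which $\phi$ still acts non-virtually-unipotently.)

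The heart of the matter --- and the step I expect to be by far the most delicate --- is Case (b): re-proving Wolf's implication ``$\phi$ not virtually unipotent $\Rightarrow$ exponential growth'' so that it (i) uses the growth hypothesis only at the single scale $R_0$, which rules out appealing to the growth of the possibly $S$-invisible finite-index subgroup and forces a direct construction of $\gg\mu^{cR_0}$ short words, and (ii) is quantitative enough --- via a polynomial-in-degree lower bound on the relevant Mahler measures --- to already yield the contradiction when $R_0\ge C^dr^C$. The remaining ingredients (the finiteness-of-conjugates step, the unipotent-automorphism fact, and the finite-by-nilpotent cleanup) are soft.
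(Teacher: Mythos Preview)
Your overall architecture---Milnor--Wolf dichotomy on the eigenvalues of the conjugation action on abelian quotients, with Kronecker/Dobrowolski providing the quantitative gap---is exactly the paper's, and your Case~(a) is essentially correct (the paper passes to a torsion-free $T$-invariant finite-index subgroup $H'''$ rather than quotienting by torsion and doing a centralizer cleanup, but these are equivalent and equally soft).

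The genuine gap is in Case~(b), and while you correctly flag the difficulty---that $\tilde N$ may be $S$-invisible at scale $R_0$---your proposed repair is not the one the paper uses, and I do not see how to make it work. An ``induction on the nilpotency class'' would require the eigenvalue dichotomy to be run on successive commutator subgroups of an \emph{$S$-visible} group; but $H$ itself is only virtually nilpotent, so the ``eigenvalues on deeper central quotients are products of top-level eigenvalues'' fact is unavailable for $H$, while for $\tilde N$ (where it is available) the generators are invisible. Your Horner construction needs $h_0$ simultaneously in $S_H$ (for short $S$-length) and with controlled image in $\tilde N^{\mathrm{ab}}$ (for the counting), and there is no bound on $[H:\tilde N]$ reconciling these.

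The paper's device is different and is really the point of Section~\ref{wolfsec-2}: Proposition~\ref{slowg} uses the single-scale growth hypothesis to manufacture a new generating set $\tilde S\subset B_S(R_0^{1/10})\cap H$ for $H$ such that
\[
\|T^n h\|_{\tilde S}\ll \exp\bigl(|n|/R_0^{0.01}\bigr)\,\|h\|_{\tilde S}\qquad\text{for \emph{all} }n\in\Z,\ h\in H.
\]
Because this bound holds for arbitrarily large $n$ (not just $n\lesssim R_0$), one can now afford an \emph{ineffective} comparison $M^{-1}\|h\|_{S'''}\le\|h\|_{\tilde S}\le M\|h\|_{S'''}$ between $\tilde S$ and any generating set $S'''$ of the torsion-free $T$-invariant $H'''$: the constant $M$ (which encodes the uncontrolled index $[H:H''']$) is absorbed by sending $n\to\infty$. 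The contradiction in Case~(b) then comes not from Horner products but from Proposition~\ref{dich}, which supplies a lattice vector $v$ with $|T^n v|\gg\exp(cn/r^{O(1)})|v|$; since $R_0\ge r^C$ forces $R_0^{0.01}\gg r^{O(1)}$, this outpaces $\exp(|n|/R_0^{0.01})$ for large $n$ regardless of $M$. In short: the single-scale hypothesis is spent once, in Proposition~\ref{slowg}, to buy an all-scales slow-growth statement, and thereafter the argument is the classical infinitary one.
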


It remains to establish Proposition \ref{qmw2}.  This will be accomplished in Section \ref{wolfsec-4}, after some preliminaries in Sections \ref{wolfsec-2}, \ref{wolfsec-3}.

\section{Second step for Proposition \ref{qmw}: slow growth of iterated conjugation}\label{wolfsec-2}

Suppose we are in the situation of Proposition \ref{qmw2}.  The short exact sequence forces $H$ to be a normal subgroup of $G$, and so the generator $e$ induces an automorphism $T: H \to H$ defined by the conjugation operation $T h := e h e^{-1}$.  As $S_H$ generates $H$, we see that there exists some radius $R$ such that $T(S_H), T^{-1}(S_H) \subset B_{S_H}(R)$.  This gives rise to the crude bound
$$ \| T^n h \|_{S_H} \leq R^{|n|} \|h\|_{S_H}$$
for all $n \in \Z$ and $h \in H$, where the norms $\| \|_{S_H}$ were defined in Definition \ref{rdg-def}.

This bound is useless for our purposes because the argument gives no effective bound on $R$.  However, it turns out that one can use the polynomial growth hypothesis $|B_S(R_0)| \leq R_0^d$ to obtain a much stronger bound, after adjusting the generating set $S$ slightly.  Namely, we have

\begin{proposition}[Slow growth]\label{slowg}  Let $R_0, d \geq 1$ be such that $R_0 \geq C^d$ for some sufficiently large absolute constant $C$.  Suppose we have a short exact sequence
$$ 0 \to H \to G \to A$$ 
of groups, where $A$ is cyclic.  Suppose also that $G = (G,S)$ is a $(R_0,d)$-growth group with $S = S_H \cup \{e,e^{-1}\}$, where $S_H \subset H$ generates $H$, and the projection $e$ of $S$ to $A$ generates $A$.  Then there exists a symmetric set $\tilde S \subset B_S(R_0^{1/10}) \cap H$ generating $H$ such that
$$
\| T^n h \|_{\tilde S} \ll \exp( |n| / R_0^{0.01} ) \| h \|_{\tilde S}
$$
for all $n \in \Z$ and $h \in H$, where $T: H \to H$ is the conjugation map $Th := ehe^{-1}$.
\end{proposition}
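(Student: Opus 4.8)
The plan is to take for $\tilde S$ (essentially) the set of conjugates
$$\tilde S_m := \bigcup_{|k|\le m} T^k(S_H) = \{\, e^k h e^{-k} : |k|\le m,\ h\in S_H \,\},$$
for a scale $m$ with $R_0^{0.011}\lesssim m\lesssim R_0^{1/10}$ to be chosen below. Assuming (as we may) that $S_H$ is symmetric, $\tilde S_m$ is symmetric, contains $S_H$ and hence generates $H$, and sits inside $B_S(2m+1)\cap H\subseteq B_S(R_0^{1/10})\cap H$. The virtue of conjugate generators is that $T$ essentially preserves the family: $T^j(\tilde S_m)\subseteq\tilde S_{m+|j|}$ for every $j\in\Z$, so if $h$ is a word of length $w$ in $\tilde S_m$ then $T^n h$ is a word of length $w$ in $\tilde S_{m+|n|}$, i.e.
$$\|T^n h\|_{\tilde S_{m+|n|}}\le\|h\|_{\tilde S_m}$$
with \emph{no} distortion at all, provided we are allowed to enlarge the generating set together with $n$. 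Everything therefore reduces to producing one scale $m$ at which enlarging the conjugate--generator set is cheap in word length: a bound $\|g\|_{\tilde S_m}\le C\exp(k/R_0^{0.01})$ for all $g\in\tilde S_{m+k}$ and all $k\ge0$, with $C$ under control.

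To find such an $m$ I would feed the polynomial growth hypothesis into a covering argument of the type used to prove Lemmas~\ref{gen-red} and~\ref{zonk}. For $m'\le R_0^{1/10}$ and $p\le R_0^{0.005}$ all of the balls $B_{\tilde S_{m'}}(p)$ lie inside $B_S(3m'p)\subseteq B_S(R_0)$ and so have cardinality at most $R_0^d$; since $B_{\tilde S_{m'}}(p)$ is nondecreasing in $m'$, a pigeonhole over geometrically spaced scales $m'$ (combined with a pigeonhole in the radius $p$) yields a scale $m$ in the stated range and a radius $p$ with $|B_{\tilde S_{2m}}(p+1)|<2|B_{\tilde S_m}(p)|$. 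The standard overlap argument then forces $x\,B_{\tilde S_m}(p)\cap B_{\tilde S_m}(p)\ne\emptyset$ for every $x\in\tilde S_{2m}$, so that $\tilde S_{2m}\subseteq B_{\tilde S_m}(2p)=:B_{\tilde S_m}(C)$. Because conjugation is a homomorphism, $T^m$ sends $\tilde S_m$ into $\tilde S_{2m}\subseteq B_{\tilde S_m}(C)$, hence $T^{qm}$ sends $\tilde S_m$ into $B_{\tilde S_m}(C^{|q|})$; splitting a general $n$ as $n=qm+s$ with $|s|<m$ and using $T^s(\tilde S_m)\subseteq\tilde S_{2m}\subseteq B_{\tilde S_m}(C)$, one gets
$$\|T^n h\|_{\tilde S_m}\le C^{\,1+|n|/m}\,\|h\|_{\tilde S_m}\qquad(h\in H,\ n\in\Z).$$
Finally, taking $m$ to be a suitable multiple of $R_0^{0.01}\log C$ converts $C^{|n|/m}$ into $\exp(|n|/R_0^{0.01})$, and $\tilde S:=\tilde S_m$ (or, if one insists on a genuinely absolute leading constant, the ball $B_{\tilde S_m}(C)$) does the job.

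The main obstacle is the control of constants in the last two steps: one must choose the good scale $m$ so that the covering constant $C$ --- the price of replacing $\tilde S_m$ by $\tilde S_{2m}$ --- is small relative to $R_0^{0.01}$, since after the submultiplicative iteration $\log C$ enters the exponent, and one must check that the leading factor in $C^{1+|n|/m}$ does not degrade the conclusion. (When $|\tilde S_{m+1}|=|\tilde S_m|$ for some admissible $m$ --- automatic once $d<1$, but not in general --- one has $\tilde S_{m+1}=\tilde S_m$, $C=1$, and the argument collapses to the trivial bound $\|T^n h\|_{\tilde S_m}\le\|h\|_{\tilde S_m}$.) A secondary issue is the case of finite cyclic $A$: then $e^{|A|}\in H$ but possibly $e^{|A|}\notin B_S(R_0^{1/10})$, while $T^{|A|}$ is conjugation by $e^{|A|}$, an inner automorphism of $H$; one handles this by adjoining a bounded--length representative of $e^{|A|}$ together with its $T$--conjugates to $\tilde S$ and using that conjugation by a fixed element distorts word length only additively. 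The remaining points --- that $S_H$ may be taken symmetric, the inclusion $\tilde S\subseteq B_S(R_0^{1/10})\cap H$, and the routine telescoping bounds on ball sizes --- are straightforward.
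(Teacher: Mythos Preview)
Your overall strategy—take conjugate-enlarged generators $\tilde S_m=\bigcup_{|k|\le m}T^k(S_H)$, find a scale at which doubling $m$ is cheap, then iterate—is exactly the paper's idea. But the pigeonhole you describe does not close. With geometric spacing $m_j=2^j m_0$ in the window $[R_0^{0.011},R_0^{1/10}]$ you have only $K\approx 0.09\log_2 R_0$ doublings available. If your inequality $|B_{\tilde S_{2m}}(p+1)|<2|B_{\tilde S_m}(p)|$ fails at every $(j,p)$, the diagonal chain $(0,1)\to(1,2)\to\cdots\to(K,K{+}1)$ only yields $2^K\le |B_{\tilde S_{m_K}}(K{+}1)|\le R_0^d$, i.e.\ $K\le d\log_2 R_0$, which is no contradiction once $d\ge 0.09$. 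So the ``pigeonhole over geometrically spaced scales combined with a pigeonhole in $p$'' does not produce the good scale; the number of $m$-doublings in your window is too small by a factor of roughly $d$. (The fix is to step arithmetically in $m$, replacing $m\mapsto 2m$ by $m\mapsto m+\Delta$ with $\Delta$ a bit larger than $R_0^{0.01}\log p$; then there is room for $\gg d\log R_0$ steps inside $[R_0^{0.011},R_0^{0.05}]$ and the diagonal argument gives a contradiction.)

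The paper sidesteps this counting problem by replacing $S_H$ with the ball $B_S(R)\cap H$ at a \emph{doubling} radius $R\in[R_0^{1/30},R_0^{1/20}]$ found first by a separate pigeonhole, so that $|B_S(10R)\cap H|\le O(1)^d|B_S(R)\cap H|$. It then sets $A_N=\bigcup_{|n|\le N}T^n(B_S(R)\cap H)$ and pigeonholes in $N$ (arithmetically, with step $C^{-d}R$): the monotone quantity $|A_N\cdot(B_S(R)\cap H)|$ lives in an interval of length $O(1)^d|B_S(R)\cap H|$, so some increment of length $C^{-d}R$ raises it by less than $|B_S(R)\cap H|$, forcing $T^n\tilde S\subset\tilde S\cdot(B_S(R)\cap H)^2\subset B_{\tilde S}(3)$ for all $|n|\le C^{-d}R$. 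The use of a ball at a doubling scale is precisely what supplies the ``denominator'' your overlap argument needs; with bare $S_H$ you have no a priori comparison between $|B_{\tilde S_m}(p)|$ and $|B_{\tilde S_{2m}}(p)|$, which is why your geometric pigeonhole stalls.

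Your worry about finite cyclic $A$ is unnecessary: nothing in either argument uses anything about $e^{|A|}$, only that $T$ is an automorphism of $H$, so the finite case requires no separate treatment.
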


\begin{proof}  By hypothesis, $|B_S(R_0)| \leq R_0^d$.
From the pigeonhole principle and the lower bound on $R_0$, we may then find a radius $R_0^{1/30} \leq R \leq R_0^{1/20}$ such that
$$ |B_S(10 R) \cap H| \leq O(1)^d |B_S(R) \cap H|.$$
Fix this $R$.  For every $0 \leq N \leq R$, consider the sets
$$ A_N := \bigcup_{-N \leq n \leq N} T^n( B_S(R) \cap H ).$$
Observe that the $A_N$ are symmetric subsets of $H$ that are increasing in $N$, and that $B_S(R) \cap H \subset A_N \subset A_N \cdot (B_S(R) \cap H) \subset B_S(10R) \cap H$, and thus
$$ |B_S(R) \cap H| \leq |A_N \cdot (B_S(R) \cap H)| \leq O(1)^d |B_S(R) \cap H|.$$
By another application of the pigeonhole principle, there exists $0 \leq N \leq R/2$ such that
$$
|A_{N + C^{-d} R} \cdot (B_S(R) \cap H)| < |A_N \cdot (B_S(R) \cap H)| + |B_S(R) \cap H|
$$
for some large absolute constant $C$.

Fix this $R$, and set $\tilde S := A_N$, thus $\tilde S$ is a symmetric subset of $B_S(R_0^{1/10}) \cap H$ which contains $S_H$ and thus generates $H$.  For every $-C^{-d} R \leq n \leq C^{-d} R$, we see from construction that
\begin{equation}\label{contra}
 |(\tilde S \cup T^n \tilde S) \cdot (B_S(R) \cap H)| < |\tilde S \cdot (B_S(R) \cap H)| + |B_S(R) \cap H|.
 \end{equation}
This implies that
$$ \tilde S \cup T^n \tilde S \subset \tilde S \cdot (B_S(R) \cap H) \cdot (B_S(R) \cap H)^{-1},$$
for if there was an element $x \in \tilde S \cup T^n \tilde S$ which did not lie in $\tilde S \cdot (B_S(R) \cap H) \cdot (B_S(R) \cap H)^{-1}$, then the set $x \cdot B_S(R) \cap H)$ would lie in $(\tilde S \cup T^n \tilde S) \cdot (B_S(R) \cap H)$ but be disjoint from $\tilde S \cdot (B_S(R) \cap H)$, contradicting \eqref{contra}.  Since $B_S(R) \cap H$ is contained in the symmetric set $\tilde S$, we conclude that
$$ T^n \tilde S \subset B_{\tilde S}(3)$$
and thus (as $T^n: H \to H$ is an automorphism)
$$ \| T^n h \|_{\tilde S} \leq 3 \|h\|_{\tilde S}$$
for all $h \in H$ and $-C^{-d} R \leq n \leq C^{-d} R$.  From iteration we then obtain
$$ \| T^n h \|_{\tilde S} \ll \exp( C^d |n| / R ) \|h\|_{\tilde S}$$
for all $h \in H$ and $n \in \Z$, and the claim then follows from the lower bounds on $R$ and $R_0$.
\end{proof}

\section{Third step for Proposition \ref{qmw}: the case of an action on lattices}\label{wolfsec-3}

We are still preparing for the proof of Proposition \ref{qmw2} (and hence Proposition \ref{qmw}). Proposition \ref{slowg} places us in the setting of an automorphism $T: H \to H$ on some virtually nilpotent group $H$ whose iterates grow very slowly with respect to some word norm $\| \|_{S'}$.  In this section we study a key model case of this situation, in which $H = \Z^D = (\Z^D,+)$ is a lattice of bounded dimension, and the word norm $\| ||_{S'}$ is replaced by the Euclidean norm $|\cdot|$ on $\R^D$ (and hence $\Z^D$).  The study of this case is central to all arguments of Milnor-Wolf type, see e.g. \cite{wolf}, or the appendix by Tits in \cite{gromov}.  The main result to establish in this section is as follows.

\begin{proposition}[Dichotomy between periodicity and exponential growth]\label{dich}  Let $D \geq 1$, and let $T \in SL_D(\Z)$ be an invertible linear transformation $T: \Z^D \to \Z^D$.  Then at least one of the following statements hold.
\begin{itemize}
\item (Periodicity) There exists a non-zero vector $w \in \Z^D$ and an integer $1 \leq n \leq D^{O(1)}$ such that $T^n w = w$.
\item (Growth)  For any $N \geq 1$, there exists a non-zero vector $v = v_N \in \Z^D$ such that
\begin{equation}\label{tnv}
 |T^N v| \gg \exp( c N / D^{O(1)} ) |v|
\end{equation}
for some absolute constant $c>0$.
\end{itemize}
\end{proposition}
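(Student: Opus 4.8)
\medskip

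\noindent The plan is to study the eigenvalues of $T$, i.e.\ the roots of $\chi_T(x):=\det(xI-T)$, a monic degree-$D$ polynomial with integer coefficients, so that each eigenvalue is an algebraic integer of degree at most $D$. I would split into two cases according to whether or not every eigenvalue of $T$ is a root of unity. \emph{Periodicity case:} if every eigenvalue is a root of unity, pick one, say a primitive $m$-th root of unity $\zeta$; its minimal polynomial over $\Q$ is the cyclotomic polynomial $\Phi_m$, which divides $\chi_T$ in $\Z[x]$, so $\phi(m)=\deg\Phi_m\le D$, and since $\phi(m)\gg\sqrt m$ this forces $m\ll D^2=D^{O(1)}$. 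Setting $n:=m$, the matrix $T^n$ has $\zeta^n=1$ as an eigenvalue, so the integer matrix $T^n-I$ is singular; its kernel is a nonzero $\Q$-subspace cut out by integer linear equations, hence contains a nonzero $w\in\Z^D$ with $T^nw=w$, giving the first alternative.

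\smallskip

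\noindent For the \emph{growth case}, suppose some eigenvalue is not a root of unity. First I would quantify the instability: by Kronecker's theorem (a nonzero algebraic integer all of whose conjugates lie in the closed unit disc is a root of unity) there is an eigenvalue $\lambda$ with $|\lambda|>1$, and since $\lambda$ is an algebraic integer of degree at most $D$ that is not a root of unity, effective lower bounds for its Mahler measure (e.g.\ Dobrowolski's theorem, or already Blanksby--Montgomery) give $M(\lambda)\ge 1+c_0/D^{O(1)}$ for an absolute $c_0>0$. As the conjugates of $\lambda$ are among the eigenvalues of $T$, the spectral radius $\rho:=\rho(T)$ obeys $\rho^D\ge M(\chi_T)\ge M(\lambda)$, whence $\delta:=\log\rho\gg 1/D^{O(1)}$. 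Now fix $N\ge 1$. Since the spectral radius never exceeds the operator norm, $\|T^N\|_{\mathrm{op}}\ge\rho(T^N)=\rho^N=e^{N\delta}$; let $w$ be a real unit vector with $|T^Nw|=\|T^N\|_{\mathrm{op}}$, let $t=t_N$ be a sufficiently large integer of size $\asymp\sqrt D/\min(\delta,1)$, and let $v=v_N\in\Z^D$ be obtained by rounding each coordinate of $tw$ to a nearest integer, so $v=tw+\eta$ with $|\eta|\le\tfrac12\sqrt D$. Then $v\ne 0$, $|v|\le t+\tfrac12\sqrt D$, and since $|T^N\eta|\le\|T^N\|_{\mathrm{op}}|\eta|$ we get $|T^Nv|\ge\|T^N\|_{\mathrm{op}}(t-\tfrac12\sqrt D)$; dividing and using the choice of $t$,
\[
\frac{|T^Nv|}{|v|}\ \ge\ e^{N\delta}\,\frac{t-\tfrac12\sqrt D}{t+\tfrac12\sqrt D}\ \ge\ \exp\!\bigl(cN/D^{O(1)}\bigr)
\]
for an absolute $c>0$, which is the second alternative.

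\smallskip

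\noindent The routine parts are the elementary number theory of the periodicity case and the elementary linear algebra of the growth case --- the one real trick there being to approximate the operator-norm-maximizing direction of $T^N$ by an integer vector by rounding a \emph{large} multiple of it, so that one loses no factor of $D$ in the direction and the estimate survives uniformly down to $N=1$. The one genuinely non-elementary ingredient, and the crux of the argument, is the step passing from the qualitative assertion ``$\chi_T$ has a root off the unit circle'' to the quantitative gap $\rho(T)\ge 1+c/D^{O(1)}$: this is exactly where effective lower bounds for the Mahler measure of algebraic integers (Kronecker's theorem being their qualitative shadow) are indispensable.
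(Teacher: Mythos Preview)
Your argument is correct and follows essentially the same route as the paper's: analyze the eigenvalues of $T$, use the bound $\phi(m)\le D$ together with Kronecker's theorem for the periodic case, and invoke Dobrowolski's lower bound to get the quantitative spectral gap $\rho(T)\ge 1+c/D^{O(1)}$ in the growth case. The only notable difference is in manufacturing the integer vector $v_N$: the paper takes an eigenvector for the large eigenvalue, passes to its real or imaginary part, and approximates by a rational vector before clearing denominators, whereas you instead take the real unit vector realizing $\|T^N\|_{\mathrm{op}}$ and round a large scalar multiple of it---your version is a bit more explicit about the approximation error and avoids complex eigenvectors, while the paper's version makes the exponential growth more transparent (it comes directly from $|\lambda_1|^N$ rather than via $\|T^N\|_{\mathrm{op}}\ge\rho^N$). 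Incidentally, your choice $t\asymp\sqrt{D}/\min(\delta,1)$ is more than you need: any fixed $t\ge 2\sqrt{D}$ already gives $(t-\tfrac12\sqrt D)/(t+\tfrac12\sqrt D)\ge 1/3$, and the resulting constant factor is absorbed by the $\gg$.
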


\begin{proof}  Let $\lambda_1,\ldots,\lambda_D \in \C$ be the eigenvalues of $T$ (counting multiplicity).  Then $\lambda_1 \ldots \lambda_D = \det(T) = \pm 1$, which implies that $\max_{1 \leq j \leq D} |\lambda_j| \geq 1$.  

Suppose first that $\max_{1 \leq j \leq D} |\lambda_j| = 1$, then all the $\lambda_j$ are algebraic integers whose Galois conjugates all lie on the unit circle.  By a classical result of Kronecker\cite{kron} (or Remark \ref{krem} below), this implies that the $\lambda_j$ are all roots of unity.  If one of the $\lambda_j$ is a primitive $n^{th}$ root of unity, then the degree $\phi(n)$ of that root cannot exceed $D$.  Elementary number theory (using the prime factorization of $n$) yields that $\phi(n) \geq c_\varepsilon n^{1-\varepsilon}$ for any $\varepsilon > 0$ and some constant $c_\varepsilon > 0$, and thus we have $n = O(D^{O(1)})$.  Then $T^n-I$ has non-trivial kernel, and we obtain the periodicity claim.

Now suppose instead that $\max_{1 \leq j \leq D} |\lambda_j| > 1$.  Applying a result of Dobrowolski \cite{dob}, we conclude in fact that $\max_{1 \leq j \leq D} |\lambda_j| > 1 + c D^{-O(1)}$ for some $c > 0$ (in fact the more precise bound $1 + c \frac{\log^3 D}{D \log \log^3 D}$ is known).  In particular we may assume that $|\lambda_1| \geq 1 + c D^{-O(1)}$.
Let $v_1$ be an eigenvector of $\lambda_1$, then clearly
$$ |T^N v_1| \gg \exp( O(1)^{-D} N ) |v_1|.$$
From the triangle inequality we see that either the real or complex part of $v_1$ obeys a similar growth bound.  Approximating this real or complex part by a non-zero vector with rational coefficients and then clearing denominators, we obtain the claim.
\end{proof}

\begin{remark}\label{krem} For our applications, one may replace Dobrowolski's lower bound of $1 + c D^{-O(1)}$ here with the more elementary bound of $1 + \exp( - O(D) )$, the proof of which we sketch as follows.  Suppose we had an algebraic integer $\lambda$ of degree $D$, all of whose Galois conjugates were at most $1+C^{-D}$ for some large absolute constant $C$, but which was not a root of unity.  Using the minimal polynomial of $\lambda$, one can find a diagonalisable transformation $T: \C^{D'} \to \C^{D'}$ for some $D' \leq D$ with integer coefficients and with eigenvalues equal to these Galois conjugates.  One can then use the corresponding eigenvectors to design a symmetric convex body $B$ with the property that $T^n(B) \subset 2 \cdot B$ for all $n \leq (C/10)^D$.  By rescaling, one may assume that $B$ contains a lattice vector $v$ in $\Z^D$ on its boundary, but no non-zero lattice vector in its interior.  As $\lambda$ is not a root of unity, the images of $T^n v$ are all distinct, and so $2 \cdot B$ contains at least $(C/10)^D$ lattice vectors; but standard volume packing arguments show that this cannot be the case for $C$ large enough.
\end{remark}

\section{Final step for Proposition \ref{qmw}: eliminating the finite factors}\label{wolfsec-4}

We are now ready to prove Proposition \ref{qmw2}.   Let $r, R_0, d, H, G, A, S, S_H, e$ be as in Proposition \ref{qmw2}.   

By hypothesis, $H$ contains a finite index nilpotent subgroup $H'$ of 
Hirsch length at most $r$, and thus step $s \le r$. As is well known, $H'$ 
then 
contains a finite index subgroup $H''$ which is torsion free.

The group $H''$ need not be $T$-invariant. However, observe from Legendre's theorem that $h^{|H:H''|} \in H''$ for all $h \in H$.  Thus, if we let $H'''$ be the group generated by $\{ h^{|H:H''|}: h \in H \}$, then $H'''$ is a $T$-invariant normal subgroup of $H''$.  The nilpotent quotient group $H''/H'''$ 
is generated by finitely many 
torsion elements, hence it is finite. 
We conclude that $H'''$ is a $T$-invariant finite index torsion free
 subgroup of $H$, and thus has Hirsch length $r$. Let $H'''_i$ be an upper
central series for $H'''$ terminating at $H'''_s := H'''$.
By torsion freeness, 
we can 
identify $H'''_i/H'''_{i-1}$ with $\Z^{d_i}$ for all $i \leq s$, 
with $\Sigma d_i = r$.

Meanwhile, by Proposition \ref{slowg}, there exists a set $\tilde S \subset B_S(R_0^{1/10}) \cap H$ generating $H$ such that
\begin{equation}\label{nosey}
\| T^n h \|_{\tilde S} \ll \exp( |n| / R_0^{0.01} ) \| h \|_{\tilde S}
\end{equation}
for all $n \in \Z$ and $h \in H$, where $T: H \to H$ is the conjugation map $Th := ehe^{-1}$.

We claim that the $\tilde S$ norm and $S'''$ norm are comparable on $H'''$, i.e. there exists an $M > 0$ such that
\begin{equation}\label{moist}
 M^{-1} \| h \|_{S'''} \leq \| h \|_{\tilde S} \leq M \|h\|_{S'''}
\end{equation}
for all $h \in H'''$.
(We do not claim an effective bound on $M$.)  The second inequality follows for sufficiently large $M$ since each element of $S'''$ is generated by $\tilde S$.  To get the former inequality, observe as in Remark \ref{joe} that we can partition $H$ into finitely many cosets $x_1 \cdot H''', \ldots, x_m \cdot H'''$ (with $x_1=\id$, say) with relations $e x_i = x_{j_{e,i}} g_{e,i}$ for all $1 \leq i \leq m$, $e \in \tilde S$ and some $1 \leq j_{e,i} \leq m$, $g_{e,i} \in H'''$; iterating these relations we see that any product of $R$ elements of $\tilde S$ can be expressed as the product of one of the $x_i$ times $R$ of the $g_{e,i}$; in particular, any product of $R$ elements of $\tilde S$ that lie in $H'''$ can be expressed as the product of $R$ of the $g_{e,i}$, giving the desired inequality for some $M$.

Inserting \eqref{moist} into \eqref{nosey} we conclude that
\begin{equation}\label{joy}
\| T^n h \|_{S'''} \ll M^2 \exp( |n| / R_0^{0.01} ) \| h \|_{S'''}
\end{equation}
for all $h \in H'''$ and $n \in \Z$.

We shall now argue similarly to Tits in his appendix to 
Gromov's~\cite{gromov}, but in a quantitative manner. 
The automorphism $T$ preserves each $H'''_i$ and thus acts on the successive 
abelian quotients which are torsion free as well (cf.~\cite{Mal1}): 
$H'''_i/H'''_{i-1} \cong \Z^{d_i}$.  Beginning from the top, 
by Proposition \ref{dich}, 
either $H'''_s/H'''_{s-1} \cong \Z^{d_s}$ contains a non-zero periodic vector 
with period at most $O(d_s)^{O(1)} = O(r)^{O(1)}$, or else one 
can find for any $n \geq 1$ a non-zero vector $v$ in $\Z^{d_s}$ such that 
$$
 |T^n v| \gg \exp( n / O(r)^{O(1)} ) |v|.
$$
But by the lower bound on $R_0$, the latter possibility contradicts \eqref{joy} if one takes $n$ sufficiently large, since one can use the norm in $\Z^{d_s}$ as a lower bound for the $S'''$ norm.  Thus $\Z^{d_s}$ contains a non-zero periodic vector $v$ with some period $1 \leq p_0 \ll r^{O(1)}$.  This implies that the operator $T^{p_0} - I$ (viewed as a linear transformation on $\Z^{d_s}$) 
maps $\Z^{d_s}$ to a $T$-invariant subgroup of $\Z^{d_s}$ of infinite 
index.  Hence the map $\Phi_{p_0}: h \mapsto (T^{p_0} h) h^{-1}$ 
maps $H'''_{(0)}:=H'''_s/H'''_{s-1}$ into a $T$-invariant 
subgroup of it, $H'''_{(1)}$ (of strictly smaller dimension).  We can 
iterate this process, finding a $1 \leq p_1 \leq O(r)^{O(1)}$ 
such that $\Phi_{p_1}$ maps $H'''_{(1)}$ into a $T$-invariant subgroup 
$H'''_{(2)}$ of strictly smaller dimension, until these groups vanish, 
i.e., $T$ takes $H'''_s=H'''$ into $H'''_{s-1}$.  But then we can work 
on the abelian quotient $H'''_{s-1}/H'''_{s-2} \cong \Z^{d_{s-1}}$ 
instead.  Standard nilpotent algebra then tells us that the \emph{square root} $||^{1/2}$ of the norm on $\Z^{d_{s-1}}$ is essentially a lower bound 
for the $S'''$ norm (continuing to rely heavily on torsion freeness), 
and we can continue the argument much as before, reducing the 
dimension of the $T$-invariant subgroups until they fall into $H'''_{s-2}$, 
then $H'''_{s-3}$, etc., eventually collapsing to the identity. 
Since $H'''$ had Hirsch length $r$, we can thus
find $1 \leq p_1,\ldots,p_m \ll r^{O(1)}$ for some $1 \leq m \leq r$ 
and $T$-invariant subgroups
$$ H''' = H'''_{(1)} \geq H'''_{(2)} \geq \ldots \geq H'''_{(m+1)} = \{\id\}$$
such that $\Phi_{p_i}$ maps $H'''_{(i)}$ to $H'''_{(i+1)}$ for all $1 \leq i \leq m$. 
 If we let $P := p_1 \ldots p_m \ll r^{O(r)}$ be the product of all these 
periods, it follows that $T^P$ acts unipotently on $H'''$.  This implies that 
$$ \{ e^{nP} h: n \in \Z, h \in H''' \}$$
is a finite-index subgroup of $G$ which is nilpotent of Hirsch length 
(and thus step) at most $r+1$, and Proposition \ref{qmw2} follows.

\section{Effectivization}\label{effective}

In this section we discuss some fully effective results including 
Theorem~\ref{polycyclic} and the case of torsion free groups, and 
mainly remark on the modifications needed in the above arguments to make the bound on the quantity $K(R_0,d)$ in Theorem \ref{main-thm} effective.  However, we will not provide complete details here for the latter, as they are rather lengthy, and the final bound on $K(R_0,d)$ obtained by this process is quite poor.

\subsection{Some fully quantitative results}\label{quantitative}

We begin by mentioning one quantitative geometric application of 
Theorem~\ref{main-thm}, based on Milnor's result~\cite{milnor1} as used in \cite[page 72]{gromov}.

\begin{corollary}[to Theorem~\ref{main-thm}]\label{fundamental}
Let $(V,d)$ be a complete Riemannian manifold of dimension $n$, and $K\ge 0$ 
be such that 
the values
of the Ricci tensor on the unit tangent bundle at all points is bounded from
below by $-(n-1)K$. Let $\Gamma$ be a group of isometries of $V$ generated by the finite subset $S$. For a point $v \in V$ define:
$$
\delta _v:=\inf \{d(\gamma v,v)|\gamma \in \Gamma\} \quad \quad
\Delta _v:=\sup \{d(\gamma v,v)|\gamma \in S \}
$$
If for some $v \in V$ and some $R> \exp(\exp(C(2n)^C)$ the inequality:
$$
4{\frac{\Delta_v}{\delta_v}} \exp(2\pi \Delta_v{\sqrt K}R) <R
$$ 
holds, where $C$ is the absolute constant of Theorem~\ref{main-thm}, 
then $\Gamma$ is virtually nilpotent.
\end{corollary}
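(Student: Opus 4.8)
The plan is to reduce the statement to Theorem~\ref{main-thm} by showing that, under the stated hypotheses, $(\Gamma,S)$ is an $(R,2n)$-growth group --- that is, $|B_S(R)|\le R^{2n}$ --- after which Theorem~\ref{main-thm}, applied with $d=2n$ and $R_0=R$ (legitimate since $R>\exp(\exp(C(2n)^C))$ for the same absolute constant $C$), immediately yields that $\Gamma$ is virtually nilpotent. Replacing $S$ by $S\cup S^{-1}$ at the outset costs nothing here, since this leaves both $\Delta_v$ and $\delta_v$ unchanged and only enlarges $B_S(R)$, so the growth bound we prove is if anything stronger; the case of finite $\Gamma$ is trivial. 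Note that the displayed inequality implicitly requires $\delta_v>0$, which I read as the assertion that every non-trivial element of $\Gamma$ displaces $v$ by at least $\delta_v$ (in particular $v$ has trivial stabiliser and $\delta_v\le\Delta_v$).

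First I would run Milnor's packing argument. Because $\delta_v>0$, for distinct $\gamma_1,\gamma_2\in\Gamma$ one has $d(\gamma_1 v,\gamma_2 v)=d(\gamma_2^{-1}\gamma_1 v,v)\ge\delta_v$, so the orbit map $\gamma\mapsto\gamma v$ is injective and the open balls $B(\gamma v,\delta_v/2)$, $\gamma\in\Gamma$, are pairwise disjoint. A telescoping estimate along a word $\gamma=s_1\cdots s_k$ with $k\le R$ and $s_i\in S$, using that each $s_i$ is an isometry of $V$, gives $d(\gamma v,v)\le R\Delta_v$ for every $\gamma\in B_S(R)$; hence the disjoint balls $B(\gamma v,\delta_v/2)$ with $\gamma\in B_S(R)$ all lie inside $B(v,R\Delta_v+\delta_v/2)$. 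Since moreover each $\gamma$ is a global isometry of $V$, $\Vol B(\gamma v,\delta_v/2)=\Vol B(v,\delta_v/2)$, and dividing volumes gives
\[ |B_S(R)|\ \le\ \frac{\Vol B(v,\,R\Delta_v+\delta_v/2)}{\Vol B(v,\,\delta_v/2)}. \]

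Then I would estimate the right-hand side using the Bishop--Gromov relative volume comparison theorem: the Ricci lower bound $-(n-1)K$ forces this ratio to be at most the corresponding ratio of volumes of geodesic balls in the simply-connected space form of constant curvature $-K$, and the elementary inequalities $t\le\sinh(\sqrt K\,t)/\sqrt K\le t\,e^{\sqrt K\,t}$ then bound that model ratio by an expression of the shape $n\,3^{n}(\Delta_v/\delta_v)^{n}R^{n}e^{2(n-1)\sqrt K\Delta_v R}$ (after using $\delta_v\le\Delta_v$ and $R\ge 1$ to tidy up the radii). Raising the hypothesis $4(\Delta_v/\delta_v)e^{2\pi\sqrt K\Delta_v R}<R$ to the $n$-th power, and using $2(n-1)\le 2\pi n$ together with $R\ge 2$, one checks that the exponential and the $(\Delta_v/\delta_v)^{n}$ factor are absorbed, leaving $|B_S(R)|<R^{2n}$, as needed. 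The main --- indeed only --- delicate point is this last step: choosing the $\sinh$ estimates and the numerical constants so that the exponent $2\pi$ in the hypothesis, and the deliberately generous exponent $2n$ in the target growth bound, precisely absorb all the $n$- and $K$-dependent factors coming out of the volume comparison. The geometric input itself --- Milnor's packing argument combined with Bishop--Gromov --- is entirely classical, being exactly the estimate of Milnor~\cite{milnor1} as used on p.~72 of~\cite{gromov}.
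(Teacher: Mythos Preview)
Your approach is essentially identical to the paper's: the paper simply quotes Milnor's inequality $|B_S(R)| \leq 4^n (\Delta_v/\delta_v)^n R^n \exp( 2\pi \Delta_v \sqrt{K} R )$ from~\cite{milnor1} and then applies Theorem~\ref{main-thm} with $d=2n$, whereas you rederive that inequality via the packing argument and Bishop--Gromov before doing the same. The only difference is the level of detail you supply for the volume comparison; the strategy and the use of $d=2n$ to absorb the constants match exactly.
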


This follows immediately from Milnor's inequality\cite{milnor1}
$$ |B_S(R)| \leq 4^n (\frac{\Delta_v}{\delta_v})^n R^n \exp( 2\pi \Delta_v \sqrt{K} R )$$
combined with the main Theorem~\ref{main-thm}, to which the condition in Corollary~\ref{fundamental} 
is tailored with the value $d=2n$. Of course, the Corollary is of interest in situations where $K$, or both $\Delta_v, \delta_v$ and their quotient, are small.

\smallskip
We next discuss Theorem~\ref{polycyclic}. This result is in fact a direct
consequence of Proposition~\ref{trivcor2} above. More precisely, this Proposition
immediately implies the following:

\begin{theorem}\label{polycyclic2} 
Let $d,R_0$ be as
in Theorem~\ref{main-thm}. There exists effective (explicit) functions
$A(R_0,d)$, $B(d)$ such that for every $(R_0,d)$-growth group $(G,S)$
a sequence of at most $B(d)$ operations of the type below
reduce the group to the trivial group:

\noindent {\bf 1.} Passing to a finite index subgroup of index bounded by
$A(R_0,d)$.

\noindent {\bf 2.} Passing to the kernel of a homomorphism to a cyclic
group (finite or infinite).

\end{theorem}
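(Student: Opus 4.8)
The plan is to iterate Proposition~\ref{trivcor2}, organized as an induction on $d$ that descends in steps of $0.9$ and bottoms out in the range $0<d<1$, which is covered by Lemma~\ref{base}. We take $A(R_0,d)$ to be an explicit function dominating every finite-index and torsion bound that occurs -- inspection shows $A(R_0,d)=\exp(R_0^{\exp(\exp(Cd^C))})$ suffices, the dominant contribution being the size of the finite group produced in alternative (i) of Proposition~\ref{trivcor2} -- and $B(d)$ to be an explicit polynomial in $d$.

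The easy cases first. If $G$ is finite -- either because $0<d<1$, by Lemma~\ref{base}, or because alternative (i) of Proposition~\ref{trivcor2} holds -- then $|G|\le A(R_0,d)$ and one operation of type 1, passing to the trivial subgroup, finishes. So assume $d\ge1$ and that alternative (ii) holds. Passing to the $(R_0^{1/10},R_0^{1/10})$-subgroup $(G',S')$ is one operation of type 1 of index at most $|B_S(R_0^{1/10})|\le R_0^d\le A(R_0,d)$ (Remark~\ref{joe}), and Proposition~\ref{trivcor2} hands us an integer $l=O(d^{O(1)})$ for which $((G')^{(l)},(S')^{(l)})$ is a $(R,d-0.9)$-growth group with $R_0^{1-1/10d}\le R\le R_0$. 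Once we can pass from $G'$ down to $(G')^{(l)}$ in $O(d^{O(1)})$ operations, the induction hypothesis applies to $((G')^{(l)},(S')^{(l)})$: the admissibility hypothesis at the reduced degree $d-0.9$ holds since $R\ge R_0^{1-1/10d}\ge\exp(\exp(\tfrac12 Cd^C))$, and iterating over the $O(d)$ rounds the scale never falls below $\exp(\exp(\tfrac12 Cd^C))$, because the product $\prod_j(1-\tfrac1{10d_j})$ of contraction factors over all rounds is bounded below by $d^{-O(1)}$. Summing the per-round costs gives $B(d)=O(d^{O(1)})$.

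It remains to pass from $G'$ to $(G')^{(l)}$. We descend the derived series $G'=(G')^{(1)}\ge(G')^{(2)}\ge\cdots\ge(G')^{(l)}$, killing one abelian quotient $A_i:=(G')^{(i)}/(G')^{(i+1)}$ at a time. Iterating Lemma~\ref{zonk} with a sufficiently small $\kappa=\kappa(d)$, each $(G')^{(i)}$ is finitely generated and is a $(R_i,d_i)$-growth group with $R_i\ge R_0^{1/2}$ and $d_i\le 2d$ -- here the point that Proposition~\ref{trivcor2} delivers the generating set $(S')^{(l)}$ inside a small ball $B_{S'}(R_0^{1/10})$, together with Lemma~\ref{trans} and Remark~\ref{growth}, is what keeps the growth order from blowing up down the series -- so $A_i$ is itself a $(R_i,d_i)$-growth group. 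Since $R_i$ is enormous, the dissociated-set argument following Lemma~\ref{gen-red} bounds the number of invariant factors of $A_i$ by $d_i\log_2 R_i$, while a comparison of the growth of $A_i$ with that of a free abelian group shows that the free rank of $A_i$ together with the number of its invariant factors of order exceeding $R_0^d$ is $O(d)$. We therefore kill $A_i$ thus: successively surjecting $(G')^{(i)}$ onto its infinite cyclic factors and onto its large ($>R_0^d$) finite cyclic factors and passing to kernels is $O(d)$ operations of type 2 and leaves a subgroup $N\supseteq(G')^{(i+1)}$ with $N/(G')^{(i+1)}$ a finite abelian group of order at most $(R_0^d)^{d_i\log_2 R_i}\le A(R_0,d)$; one operation of type 1 then carries $N$ to $(G')^{(i+1)}$. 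Thus each of the $l$ levels costs $O(d)$ operations and the whole descent costs $O(dl)=O(d^{O(1)})$, as needed.

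The one genuinely substantive point -- the remainder being parameter bookkeeping of the type already carried out in Sections~\ref{kleinersec-1}--\ref{kleinersec-5} -- is the assertion that each abelian subquotient $A_i$ has only $O(d)$ cyclic factors that are infinite or of order $>R_0^d$. This is the single-scale incarnation of the classical fact that an abelian group of polynomial growth degree $d$ has rank at most $d$, and it is precisely what keeps the number of type 2 operations -- which, unlike the type 1 operations, carry no quantitative budget -- bounded in terms of $d$ alone rather than in terms of $R_0$. By contrast the residual torsion, though of possibly enormous order, costs nothing extra: it is absorbed into a single finite-index step whose explicit index is recorded in $A(R_0,d)$; and a genuinely uncontrolled cyclic factor such as $\Z/N$ with $N$ not bounded by any function of $R_0$ must instead, and can, be removed by one of the free type 2 moves.
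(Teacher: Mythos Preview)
Your proof is correct and follows the same route as the paper—iterating Proposition~\ref{trivcor2} and descending by $0.9$ in the growth exponent—but you supply a genuine piece of argument that the paper omits: the paper simply asserts that $B(d)=O(d^{O(1)})$ is ``obvious'' from Proposition~\ref{trivcor2}, whereas you actually explain how to pass from $G'$ to $(G')^{(l)}$ via the two allowed moves, by decomposing each abelian subquotient $A_i$ into cyclic pieces.

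Your key substantive claim—that $A_i$ has only $O(d)$ invariant factors that are infinite or of order exceeding the scale—is correct, and your one-line justification (``comparison with a free abelian group'') can be cashed out as follows: passing to the quotient $\bar A_i$ of $A_i$ by its small-torsion summand, one gets an abelian group all of whose $k$ invariant factors exceed $2R_i$; for any minimal generating $k$-tuple $\bar s_1,\dots,\bar s_k$ extracted from the image of $S_i$, the kernel of $\Z^k\to\bar A_i$ lies inside $n\Z^k$ with $n>2R_i$ (since the images in $\bar A_i/n\bar A_i\cong(\Z/n)^k$ still form a minimal, hence invertible, system), so the $\ell^1$-ball of radius $R_i$ injects and $|B_{S_i}(R_i)|\ge\binom{R_i+k}{k}$, forcing $k\le 2d$ once $R_i$ is large. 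This is exactly the single-scale analogue you allude to, and it is what makes $B(d)$ polynomial rather than merely effective. The threshold $R_0^d$ you use works for $d\ge 2$; for the borderline case it is cleaner to take the threshold $2R_i$, which changes nothing in the final bound on $A(R_0,d)$.
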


The proof of this result goes simply by inspecting Proposition~\ref{trivcor2}.
In each step where {\bf 1} occurs, the worst bound on $A(R_0,d)$ comes
from possibility (i) in Proposition~\ref{trivcor2}, where $A(R_0,d)$ has to be taken
as an upper bound for the size of the ball $B_S(R_0^{\exp(\exp(O(d)^{O(1)}))})$. Otherwise,
we pass to the finite index subgroup $G'$ appearing in (ii), for
which the inclusion $G \subseteq B_S(R_0)G'$ holds, giving a bound
$R_0^d$ on its index. By Proposition~\ref{trivcor2} the number of operations
$B(d)$ performed in this process is obviously $O(d^{O(1)})$, thus
completing the proof of Theorem~\ref{polycyclic2}.

To complete the proof of Theorem \ref{polycyclic2}, notice that every group which can be reduced to the trivial
group using the operations {\bf 1} and {\bf 2} above is virtually polycyclic in an effective manner, by applying the following lemma as many times as necessary:

\begin{lemma} If a group $G$ has a subgroup $G'$ 
of index at most $I$ which contains a polycyclic normal subgroup of index at most $J$,
then $G$ has a polycyclic normal subgroup of index at most $(I\cdot J)!$.
\end{lemma}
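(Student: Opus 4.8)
The plan is to pass to the \emph{normal core} of the given polycyclic subgroup in $G$, and then invoke the standard fact that subgroups of polycyclic groups are polycyclic. The only genuine point to address is that the polycyclic subgroup we are handed is normal in $G'$ but not necessarily in $G$.

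First I would record the elementary index computation: if $[G:G'] \le I$ and $G'$ contains a normal subgroup $N$ with $[G':N] \le J$, then by multiplicativity of index $[G:N] \le IJ$; and $N$ is polycyclic by hypothesis. Next I would set
$$ M := \bigcap_{g \in G} gNg^{-1}, $$
the core of $N$ in $G$. By construction $M$ is normal in $G$ and $M \le N$. To bound its index, consider the action of $G$ by left translation on the set of left cosets $\{ gN : g \in G\}$, which has cardinality $[G:N] \le IJ$; this gives a homomorphism from $G$ to the symmetric group on $[G:N]$ letters whose kernel is exactly $M$. Hence $G/M$ embeds into that symmetric group, so $[G:M] \le [G:N]! \le (I\cdot J)!$, using that the factorial is monotone.

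Finally I would observe that $M$, being a subgroup of the polycyclic group $N$, is itself polycyclic: a polycyclic group possesses a subnormal series with cyclic quotients, and intersecting this series with $M$ yields a subnormal series of $M$ whose successive quotients are subgroups of the original cyclic quotients, hence cyclic. Thus $M$ is a polycyclic normal subgroup of $G$ of index at most $(I\cdot J)!$, as required. There is no serious obstacle; the lemma is routine, the one thing to be careful about being that normality of $N$ does not ascend to $G$ and must be repaired by passing to the core, at the (for our application, entirely acceptable) cost of replacing the index bound $I\cdot J$ by $(I\cdot J)!$.
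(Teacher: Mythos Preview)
Your proof is correct and is essentially identical to the paper's: both pass to the normal core of the polycyclic subgroup via the left-multiplication action of $G$ on its cosets, bound the index by the order of the symmetric group on at most $IJ$ letters, and finish by noting that subgroups of polycyclic groups are polycyclic. The only cosmetic differences are notation and that you spell out the ``subgroups of polycyclic are polycyclic'' step in slightly more detail.
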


\begin{proof} By hypothesis, $G$ has a polycyclic subgroup $G''$ of index at most $IJ$.  The group $G$ acts by left multiplication on the quotient set $G/G''$; the stabilizer $H$ of this action then has index at most $(IJ)!$, and is a normal subgroup of $G$ that is contained in $G''$.  Since any subgroup of a polycyclic group is polycyclic, the claim follows.
\end{proof}

\begin{remark} Of course, only type {\bf 1} steps contribute to the computation
of the total index (this makes the computation effective; unfortunately,
the lack of reasonable control over the size of the {\it finite} quotients
appearing in {\bf 2} is responsible for our inability to get a similarly
effective result concerning the index of a {\it nilpotent} subgroup).
\end{remark}

Finally, the following observation
shows that torsion is the only obstacle at this point to obtaining a fully 
effective version of Gromov's Theorem, an issue which will be shortly discussed
in more detail over the next subsection.

\begin{corollary}[to Theorem~\ref{main-thm}]\label{torsion-free}
Retain the assumptions of Theorem~\ref{main-thm}, but assume further that
$G$ is torsion free. Then a finite index nilpotent subgroup as in the 
Theorem can be found with index $q$ satisfying
$$
\log \log \log q < C^d
$$
where $C$ is an absolute (effective) constant.

\end{corollary}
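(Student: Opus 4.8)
The plan is as follows. The only place where the proof of Theorem~\ref{main-thm} becomes ineffective is inside the proof of Proposition~\ref{qmw2}: there one passes to a torsion-free finite-index subgroup $H''$ of the nilpotent group $H'$, which is what makes the constant $M$ in \eqref{moist}, and hence the final index, ineffective. When $G$ is torsion free this particular difficulty evaporates, but simply re-tracing the proof is still not enough, since the passages to finite-index subgroups in Lemma~\ref{gen-red} and Proposition~\ref{trivcor2}(ii) contribute an index growing with $R_0$. So instead I would argue in two stages. First, apply Theorem~\ref{main-thm} as it stands to obtain \emph{some} finite-index nilpotent subgroup $N\le G$ of Hirsch length and step at most $C^d$ (its index being irrelevant at this point). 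Then, discarding $R_0$ altogether, I would extract from the group-theoretic structure of $G$ alone a nilpotent subgroup whose index is bounded effectively in terms of $C^d$ only.

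For the second stage, take $F:=\operatorname{Fitt}(G)$, the Fitting subgroup. Since $G$ is polycyclic-by-finite, hence Noetherian, Fitting's theorem shows $F$ is nilpotent; moreover $F$ contains the normal core of $N$, so $F$ has finite index in $G$, whence $F$ has the same Hirsch length as $G$, namely $\le C^d$, and step $\le C^d$ (for a torsion-free nilpotent group the step is bounded by the Hirsch length, via the isolated lower central series). Being a subgroup of $G$, $F$ is torsion free — this is the one point where the hypothesis on $G$ is used. By the standard fact $C_G(\operatorname{Fitt}(G))\subseteq\operatorname{Fitt}(G)$ for polycyclic-by-finite groups, conjugation embeds $G/F$ into $\operatorname{Out}(F)$. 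It then suffices to bound the order of a finite subgroup $Q\le\operatorname{Out}(F)$ for $F$ a finitely generated torsion-free nilpotent group of Hirsch length $h\le C^d$: the $Q$-action on the (torsion-free) graded pieces $Z_i/Z_{i-1}\cong\Z^{d_i}$ of the isolated upper central series ($\sum_i d_i=h$) gives a map $Q\to\prod_i GL_{d_i}(\Z)$ with image of order $\le\prod_i M(d_i)$ by Minkowski's bound $M(d)$ on finite subgroups of $GL_d(\Z)$, while its kernel consists of unipotent (graded-trivial) automorphisms of finite order, hence is trivial in characteristic zero. This yields $q:=|G:F|\le\prod_i M(d_i)\le\exp(\exp(Ch))\le\exp(\exp(C\cdot C^d))$.

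A crude accounting then gives $\log\log\log q\le\log(C\cdot C^d)=\log C+d\log C$, which is comfortably below the asserted $C^d$; and $F$ is a nilpotent subgroup of Hirsch length and step $\le C^d$, i.e.\ ``as in Theorem~\ref{main-thm}'', so the corollary follows. The step I expect to require the most care is the explicit bound on finite subgroups of $\operatorname{Out}(F)$: one must invoke Minkowski's effective bound on torsion in $GL_d(\Z)$, check that the relevant kernel is exactly the group of unipotent automorphisms and that such automorphisms of finite order are trivial, and verify that passing to the isolated central series loses no control of the Hirsch length or step (together with recalling the standard centralizer fact for the Fitting subgroup). It is also worth recording where torsion genuinely obstructs the general case: if $\operatorname{Fitt}(G)$ has torsion then $\operatorname{Out}(\operatorname{Fitt}(G))$ can contain arbitrarily large finite subgroups even when the Hirsch length is small (e.g.\ $\Z\times A$ with $A$ a large finite nilpotent group), so no such effective bound on $q$ can hold without the torsion-free hypothesis.
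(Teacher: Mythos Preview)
Your approach is correct and in essence the same as the paper's: both arguments first invoke Theorem~\ref{main-thm} to know $G$ is virtually nilpotent with Hirsch length at most $C^d$, and then appeal to the classical fact that in a torsion-free virtually nilpotent group the index of a nilpotent subgroup is bounded effectively in terms of the Hirsch length alone. The paper simply cites this last step as ``a result of Auslander and Schenkman'' (via Corollary~\ref{allscales} and \cite[p.~71]{gromov}), whereas you reprove it by hand via the Fitting subgroup, the embedding $G/F\hookrightarrow\operatorname{Out}(F)$, and Minkowski's bound on torsion in $GL_{d_i}(\Z)$. So your write-up is more self-contained; the paper's is a one-line citation.

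One point in your sketch deserves the extra care you anticipated. Your claim that the kernel of $Q\to\prod_i GL_{d_i}(\Z)$ is trivial because ``unipotent automorphisms of finite order are trivial'' is not quite the right justification: an element of $Q\le\operatorname{Out}(F)$ in the kernel lifts to a graded-trivial $\phi\in\operatorname{Aut}(F)$, but $\phi$ need only have finite order in $\operatorname{Out}(F)$, not in $\operatorname{Aut}(F)$. The clean fix is this: let $G_0\le G$ be the full preimage of the kernel, so $F\le G_0$ and every $g\in G_0$ acts trivially on each $Z_i/Z_{i-1}$. The stability group $U$ of the series $\{Z_i\}$ is nilpotent (Kaloujnine--Hall), hence so is its quotient $U/\operatorname{Inn}(F)$, and therefore $G_0/F\hookrightarrow U/\operatorname{Inn}(F)$ is a finite \emph{nilpotent} group. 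Concatenating the central series $1=Z_0\le\cdots\le Z_s=F$ (each $Z_i/Z_{i-1}$ is central in $G_0/Z_{i-1}$ by hypothesis) with the preimage of the upper central series of $G_0/F$ gives a central series for $G_0$; thus $G_0$ is nilpotent and normal in $G$, so $G_0\le\operatorname{Fitt}(G)=F$, forcing $G_0=F$ and the kernel to be trivial as you claimed.
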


This follows immediately from Corollary~\ref{allscales} and a result of
Auslander and Schenkman -- see~\cite[page 71]{gromov} and the reference 
therein.

\subsection{Towards full effectivization of Theorem~\ref{main-thm}}
For the purpose of full quantification one must forego the 
compactness argument in 
Section \ref{compact-sec} that allowed us to reduce Theorem \ref{main-thm} to Theorem \ref{main-thm2}.  Instead, one must replace ``virtually $r$-nilpotent'' by something more like ``$(K,R,s,D)$-virtually nilpotent'' throughout the arguments, making sure to keep the bounds on $K, R$ effective.  

Of the three components of the proof in Section \ref{reduct-sec}, the first two (Lemma \ref{base} and Proposition \ref{trivcor2}) are already completely effective (and do not use the notion of virtual $r$-nilpotency).  The main issue, therefore, is to locate an effective version of Proposition \ref{qmw}.  This proposition is applied about $O(d)$ times during the induction on the growth order $d$ in the proof of Theorem \ref{main-thm}, so any deterioration in the $K, R$ constants occurring in that proposition will need to be iterated $O(d)$ times to obtain the final bound on $K(R_0,d)$.  The arguments in Section \ref{wolfsec-1}, in which Proposition \ref{qmw} is reduced to Proposition \ref{qmw2}, are also quite effective, in the sense that any effective version of the latter can be converted by a routine modification of the arguments in that section to a quantitative version of the former.  But as before, the deterioration of the bounds will worsen due to the induction on the step $l$ and the dimension $m$ appearing in the arguments in that section (more precisely, the bounds in Proposition \ref{qmw} will basically be a $O(1)^d$-fold iteration of the bounds in Proposition \ref{qmw2}).  

The key proposition in Section \ref{wolfsec-2}, namely Proposition \ref{slowg}, is already effective.  The key proposition in Section \ref{wolfsec-3}, namely Proposition \ref{dich}, can also be made effective without much difficulty, for instance by using the Kronecker approximation theorem to quantitatively approximate a vector by one with rational coefficients, and using Cramer's rule to bound all the vectors that are constructed from linear algebra (e.g. locating an integer vector in the null space of a matrix with integer coefficients).  To deal with various denominators in Cramer's rule, one does need to obtain a quantitative lower bound on the difference $|\lambda_i - \lambda_j|$ between two eigenvalues of a matrix $T$ with integer coefficients, but this can be accomplished by using Galois theory to observe that the product $\prod_{\lambda_i \neq \lambda_j} (\lambda_i - \lambda_j)$ of all these eigenvalue gaps is a non-zero rational integer, and in particular has magnitude at least $1$.  We omit the details.

The only remaining parts of the argument which need more careful attention are those in Section \ref{wolfsec-4}, as it is here that one truly begins to exploit such qualitative notions as Hirsch length and 
torsion freeness.

Recall that a group $G = (G,S)$ is virtually $r$-nilpotent if it contains a finite index nilpotent subgroup $G' = (G',S')$ which is torsion free of 
Hirsch length $r$ 
(and thus step at most $r$).  The notion of finite index can be made quantitative using the concept of a $(K,R)$-subgroup as defined in Definition \ref{qfi-def}.  Nilpotency of a given step is also a quantitative concept (being nilpotent of step $s$ is equivalent to the $s$-fold commutators of the generating set $S'$ vanishing).  However, torsion freeness cannot be verified using only finitely many group operations on the generators and so should not be considered as quantitative.  (For instance, the additive group $\Z/N\Z$ with generating set $\{-1,+1\}$ has torsion, in contrast to the torsion-free group $\Z$ with the same set of generators, but it is only possible to distinguish the two groups after performing at least $O(\log N)$ group operations on the generators, which is unbounded as $N \to \infty$.)  This is a genuine problem in the proof, 
as torsion freeness is used crucially in Section \ref{wolfsec-4} in 
order to bound $\|T^n h \|_{S'''}$ from below using the norm of a projection of $T^n h$ to a free abelian group $\Z^{d_i}$.

It is thus natural to seek finitary substitutes for the concept of being torsion-free.  Call a abelian group $G = (G,+)$ generated by a set $S = \{ \pm e_1,\ldots,\pm e_D \}$ of generators $e_1,\ldots,e_D$ \emph{$M$-torsion-free} for some $M \geq 1$ if the sums $\{ n_1 e_1 + \ldots + n_D e_D: |n_1| + \ldots + |n_D| \leq M\}$ that make up $B_S(M)$ are all distinct.  This property, in contrast to the qualitative property of being torsion-free, can be verified in finite time using a finite number of group operations on generators, and so we consider this a quantitative property.  
A key ingredient in the proof that any finitely generated nilpotent group has
a finite index torsion free subgroup is that same result for 
abelian groups.  We now give a quantitative version of this fact.  It is convenient to extend the asymptotic notation $X \ll Y$ by allowing the constants to depend on additional parameters, indicated by subscripts on the $\ll$ symbol; for instance, $X \ll_D Y$ means that $X \leq C(D) Y$ for some quantity $C(D)$ depending only on $D$.  (Of course, to get effective bounds at the end of the day, it is important to ensure that all implied constants such as $C(D)$ depend in an effective fashion on parameters such as $D$.)

\begin{lemma}[Quantitative location of an $M$-torsion-free group]\label{mtf}  Let $G = (G,S,+)$ be a finitely generated abelian group with $D$ generators $S  = \{ \pm e_1, \ldots, \pm e_D \}$, and let $F: \R^+ \to \R^+$ be an arbitrary function.  Then there exists an integer $1 \leq M \ll_{F,D} 1$ and a $(M,1)$-subgroup $(G',S')$ of $G$ with $S' = \{ \pm f_1, \ldots, \pm f_{D'} \}$ for some $1 \leq D' \leq D$, such that $(G',S')$ is $F(M)$-torsion-free.
\end{lemma}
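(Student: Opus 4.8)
The plan is to argue by induction on the number $D$ of generators $e_1, \dots, e_D$ of $G$. The base case $D = 0$ is trivial, so suppose $D \ge 1$. The idea is to fix once and for all the small threshold value $M^\ast := 1$ at which we test torsion-freeness, and to dichotomize according to whether $(G, S)$ is $F(1)$-torsion-free. If it is --- i.e.\ the sums $n_1 e_1 + \cdots + n_D e_D$ with $|n_1| + \cdots + |n_D| \le F(1)$ are pairwise distinct --- then we are immediately done: $(G, S)$ is itself a $(1,1)$-subgroup of $(G,S)$ and is $F(1)$-torsion-free, so we take $(G', S') := (G,S)$, $D' := D$, $M := 1$. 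The substantive case is when two such sums agree.

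In that case, subtracting the two coinciding sums yields a nontrivial relation $\sum_i m_i e_i = 0$ with $\sum_i |m_i| \le 2F(1)$; after relabelling we may assume $m_D \ne 0$. Then $m_D e_D$ lies in $G' := \langle e_1, \dots, e_{D-1} \rangle$, and I would next verify --- this is the technical heart of the argument --- that, with $S' := \{\pm e_1, \dots, \pm e_{D-1}\} \subset B_S(1)$ and $K_1 := 2F(1) + 3$, the pair $(G', S')$ is a $(K_1, 1)$-subgroup of $(G,S)$. Writing an arbitrary $w \in B_S(K_1 + 1)$ as $w = \sum_i n_i e_i$ with $\sum_i |n_i|$ minimal, the splitting $w = b\, s'$ with $b \in B_S(K_1)$, $s' \in B_{S'}(K_1)$ is routine (move a single $e_j$ with $j < D$, or all of $n_D e_D$, into $b$) except in the extremal case $w = \pm(K_1 + 1)e_D$, which must be handled by a single application of the relation: $(K_1+1)e_D = (K_1 + 1 - |m_D|)e_D \mp \sum_{i<D} m_i e_i$, legitimately split since $1 \le |m_D| \le K_1$ and $\sum_{i<D}|m_i| \le K_1$. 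The crucial feature is that $K_1$ is controlled purely by $F(1)$ --- in particular it does not see the (possibly enormous) order of the torsion subgroup of $G$.

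Having passed to $G'$ with one fewer generator, I would then apply the inductive hypothesis to $(G', S')$ with the auxiliary function $F_1(m) := F\big(K_1\, m\, (K_1 + m + 1)\big)$, producing $M'' \ll_{F_1, D-1} 1$ and a subgroup $(G'', S'')$ that is an $(M'', 1)$-subgroup of $(G', S')$, with $D'' \le D-1$ generators, and is $F_1(M'')$-torsion-free. Transitivity of quantitative finite index (Lemma \ref{trans}) then upgrades this to: $(G'', S'')$ is an $(M, 1)$-subgroup of $(G, S)$ with $M := K_1\, M''\, (K_1 + M'' + 1)$. The function $F_1$ was chosen precisely so that $F_1(M'') = F(M)$, whence $(G'', S'')$ is $F(M)$-torsion-free. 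Finally, since $K_1$ depends only on $F$ and $M'' \ll_{F_1, D-1} 1$ with $F_1$ determined by $F$ and $D$, the quantity $M = K_1 M''(K_1 + M'' + 1)$ satisfies $M \ll_{F, D} 1$, which closes the induction.

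I expect the main obstacle to be exactly the $(K_1, 1)$-subgroup verification in the substantive case: one must ensure that the word length $K_1$ needed to ``see'' $G'$ inside $G$ depends only on the length $2F(1)$ of a shortest witnessing relation, and \emph{not} on the index $[G : G']$, which can be arbitrarily large. The dichotomy at the fixed threshold $M^\ast = 1$ is what makes this possible --- when the torsion of $G$ is large, $G$ is already $F(1)$-torsion-free and no descent is needed; when it is small, the witnessing relation is correspondingly short, so $K_1$ stays bounded. The remaining steps (the splitting computation, and tracking the constants through the induction) are routine. One minor caveat: for finite $G$ the recursion may terminate with $D' = 0$ (i.e.\ $G' = \{0\}$), so the statement is to be read with the harmless convention $D' \ge 0$, or restricted to infinite $G$, which is the case of interest.
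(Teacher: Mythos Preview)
Your proof is correct and follows essentially the same rank-reduction strategy as the paper: test torsion-freeness at the current threshold, and if it fails, use the resulting short relation to drop one generator and verify that the remaining generators span a $(K,1)$-subgroup with $K$ controlled only by $F$, then iterate (you phrase this as an induction on $D$, the paper as a loop). Your verification of the $(K_1,1)$-subgroup property is in fact more explicit than the paper's sketch and yields the slightly sharper constant $K_1 = 2F(1)+3$ in place of the paper's $O(D'F(M))$, but the underlying idea is identical.
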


\begin{proof} (Sketch)  We use the ``rank reduction argument''.  We begin by initializing $(G',S')$ equal to $(G,S)$ and initializing $M := 1$, then $(G',S')$ is already an $(M,1)$-subgroup of $(G,S)$.  If $(G',S')$ is $F(M)$-torsion-free then we are done.  If not, write $S' = \{\pm f_1,\ldots,\pm f_{D'}\}$ for some $D'$ (which is initially equal to $D$).  Because of the failure of $(G',S')$ to be $F(M)$-torsion-free, we must have a non-trivial dependence
$$ n_1 f_1 + \ldots + n_{D'} f_{D'} = 0$$
where $0 < |n_1|+\ldots+|n_{D'}| \leq 2F(M)$.  Without loss of generality we may take $n_{D'}$ to have the largest magnitude, and in particular be non-zero.  Then we see that the subgroup $(G'',S'')$ of $(G',S')$ generated by $S'' := \{ \pm f_1,\ldots, \pm f_{D'-1} \}$ has finite index (indeed, the index is at most $|n_{D'}|$); more quantitatively, one can show that $(G'',S'')$ is a $(O( D' F(M) ),1)$-subgroup of $(G',S')$, which by Lemma \ref{trans} implies that $(G'',S'')$ is a $(M', 1)$-subgroup of $(G,S)$ for some $M'= O(D M F(M))^{O(1)}$.  One then replaces $(G',S'), D', M$ by $(G'',S''), D'', M'$ respectively, and iterates this procedure.  Since the rank $D'$ starts at $D$ and decreases by one at each stage, this algorithm terminates in at most $D$ steps, and the claim follows.
\end{proof}

\begin{remark} The eventual bound on $M$ is essentially a $D$-fold iteration of $F$.  Unfortunately, in applications $F$ has to be quite a rapidly growing function (exponential or worse), which leads to quite poor bounds, especially after the induction loops on the growth order $d$, the solvability index $l$, and the dimension $D$ that appear in the proof are carried out.  It is thus of interest to reduce the dependence on the torsion-free property (and also to reduce the length of the induction loops) in order to improve the bounds.
\end{remark}

In a similar vein to the above discussion, it is possible to define a quantitative notion of an ``$M$-torsion free nilpotent group'', and establish a quantitative version of the fact that every finitely generated nilpotent group contains a finite index torsion free subgroup, which roughly speaking 
asserts that for any given $F$, any $(s,D)$-nilpotent subgroup 
will have an $(M,M)$-subgroup which is $F(M)$-torsion free 
for some $M \ll_{F,s,D} 1$.  This will turn out to be a usable 
quantitative substitute for the qualitative notion of torsion freeness
 if $F$ is 
chosen to be sufficiently rapidly growing.  Indeed, the various 
vectors in $\Z^D$ which appear in the arguments in Section \ref{wolfsec-4} will have norm bounded by some effective function of the parameter $M$ (thanks to quantitative versions results such as Proposition \ref{dich}), and as long as $F(M)$ is much larger than the norm of these vectors, then the $M$-torsion free 
nilpotent groups involved will behave ``as if'' they are genuinely torsion free
 for the purposes of the computations being performed.  For instance, it would suffice to take $F(M) := \exp(\exp(\exp( (CdlM)^C )$ for some large absolute constant $C$.  If one does this, then the final bounds obtained on $K(R_0,d)$ are essentially an Ackermann function of $R_0$ and $O(1)^d$.

\end{document}